\Crefname{equation}{}{}
\crefname{equation}{}{}
\crefname{equation}{}{}
\crefname{figure}{Figure}{Figure}
\crefname{section}{Section}{Section}
\crefname{lemma}{Lemma}{Lemma}
\crefname{proposition}{Proposition}{Proposition}
\crefname{theorem}{Theorem}{Theorem}
\crefname{corollary}{Corollary}{Corollarie}
\crefname{definition}{Definition}{Definition}
\crefname{notation}{Notations}{Notation}
\crefname{remark}{Remark}{Remark}
\crefname{claim}{Claim}{Claim}  
\crefname{assumption}{Assumption}{Assumption}
\newtheorem{assumption}{Assumption}
\newtheorem{remark}[theorem]{Remark}
\newcommand{\changed}[1]{#1}
\newcommand{\Var}[1]{\mathcal{#1}}
\newcommand{\Pj}{\mathbb{P}}
\newcommand{\vect}[1]{#1}
\newcommand{\Tang}[2]{\mathrm{T}_{#1} {#2}}
\newcommand{\NSp}[2]{\mathrm{N}_{#1} {#2}}
\newcommand{\R}{\mathbb{R}}
\newcommand{\deriv}[2]{\mathrm{D}_{#2}#1}
\DeclareMathOperator*{\argmin}{argmin}
\newcommand{\dist}{\mathrm{dist}}
\newcommand{\SFF}{\mathit{I\!I}}
\title{The condition number of\\ Riemannian approximation problems\thanks{Submitted to the editors. \funding{P.~Breiding has received funding from the European Research Council (ERC) under the European Union's Horizon 2020 research and innovation programme (grant agreement No 787840). N.~Vannieuwenhoven was supported by a Postdoctoral Fellowship of the Research Foundation---Flanders (FWO) with project 12E8119N.}}}
\author{Paul Breiding\thanks{Institute of Mathematics, TU Berlin, Berlin, Germany. (\email{breiding@math.tu-berlin.de})} \and Nick Vannieuwenhoven\thanks{KU Leuven, Department of Computer Science, Leuven, Belgium. (\email{nick.vannieuwenhoven@kuleuven.be})}}
\begin{document}

\maketitle

\begin{abstract} 
We consider the local sensitivity of least-squares formulations of inverse problems. The sets of inputs and outputs of these problems are assumed to have the structures of Riemannian manifolds.
The problems we consider include the approximation problem of finding the nearest point on a Riemannian embedded submanifold from a given point in the ambient space.
We characterize the first-order sensitivity, i.e., condition number, of local minimizers and critical points to arbitrary perturbations of the input of the least-squares problem.
This condition number involves the Weingarten map of the input manifold, which measures the amount by which the input manifold curves in its ambient space.
We validate our main results through experiments with the $n$-camera triangulation  problem in computer vision.
\end{abstract}

\begin{keywords}
 Riemannian least-squares problem, sensitivity, condition number, local minimizers, Weingarten map, second fundamental form
\end{keywords}

\begin{AMS}
 90C31, 53A55, 65H10, 65J05, 65D18, 65D19
\end{AMS}

\section{Introduction} \label{sec_intro}
A prototypical instance of the Riemannian optimization problems we consider here \changed{arises from} the following {inverse problem} (IP): Given an input $x\in \Var I$, compute $y\in \Var O$ such that $F(y)=x$. Herein, we are given a \textit{system} modeled as a smooth map~$F$ between a Riemannian manifold $\Var{O}$, the \textit{output manifold}, and a Riemannian embedded submanifold $\Var{I} \subset \R^n$, the \textit{input manifold}. \changed{In practice, however, we are often given a point $a\in\R^n$ in the \textit{ambient space} rather than $x\in\Var{I}$, for example because $a$ was obtained from noisy or inaccurate physical measurements. This leads to the Riemannian least squares optimization problem associated to the foregoing IP:}
\begin{align} \tag{PIP} \label{eqn_PIP}
 \argmin_{y \in \Var{O}} \frac{1}{2} \| F(y) - a \|^2.
\end{align}
In \cite{BV2017b} we called this type of problem a  \textit{parameter identification problem} (PIP).
Application areas where PIPs originate are \changed{data completion with low-rank matrix and tensor decompositions \cite{BV2018,Vandereycken2013,KSV2014,Steinlechner2016,dSH2015,HS2018}, geometric modeling \cite{CM2011, LW2008, SJ2008}, computer vision \cite{FL2001, HZ2003, Maybank1993}, and phase retrieval problems in signal processing \cite{Boumal2016,BEB2018}.}

The manifold $\Var{O}$ is called the output manifold because its elements are the outputs of the optimization problem. The outputs are the quantities of interest; for instance, they could be the unknown system parameters that one wants to identify based on a given observation $a$ and the forward description of the system $F$.
\changed{In order to distinguish between the inputs $x\in\Var{I}$ of the IP and the inputs $a\in\R^n$ of the optimization formulation \cref{eqn_PIP}, we refer to the latter as \textit{ambient inputs}.}
In this paper, we study the sensitivity of the output of the optimization problem \cref{eqn_PIP} \changed{(and an implicit generalization thereof)} when the ambient input $a\in\R^n$ is perturbed by considering its \textit{condition number} $\kappa$ at global and local minimizers and critical points. 

\subsection{The condition number} \label{sec_condition}
Condition numbers are one of the cornerstones in numerical analysis\cite{Wilkinson1963,Wilkinson1965,ANLA,matrix_computations,BC2013,Higham1996,BT1997}.
\changed{Its definition was introduced for} Riemannian manifolds already in the early days of numerical analysis. In 1966, Rice \cite{Rice1966} presented the definition in the case where input and output space are general nonlinear \emph{metric spaces}; the special case of Riemannian manifolds is \cite[Theorem 3]{Rice1966}.
\changed{Let $\Var{I}$ and $\Var{O}$ be Riemannian manifolds and assume that the map
$f:\Var I\to \Var O$ models a computational problem}. Then, Rice's definition of \changed{the condition number of $f$} at a point $x\in \Var{I}$ is
\begin{equation}\label{computational_problem2}
\kappa[f](x) := 
\lim_{\epsilon \to 0}\; \sup_{\substack{x' \in \Var{I},\\ \dist_{\Var{I}}(x,x') \leq \epsilon}}\; \frac{\dist_{\Var{O}}(f(x),f(x'))}{\dist_{\Var{I}}(x,x')},
\end{equation}
where $\dist_{\Var{I}}(\cdot,\cdot)$ denotes the distance given by the Riemannian metric on $\Var{I}$ and likewise for $\Var{O}$.
\changed{It follows immediately that the condition number yields an asymptotically sharp upper bound on the perturbation of the output of a computational problem when subjected to a perturbation of the input $x$. Indeed, we have
\[
 \dist_{\Var{O}}\left( f(x), f(x') \right) \le \kappa[f](x) \cdot \dist_\Var{I}( x, x' ) + o\left( \dist_\Var{I}( x, x' ) \right).
\]}%
The number $\kappa[f](x)$ is an \emph{absolute} condition number. On the other hand, \changed{if $\Var{I}$ is a submanifold of a vector space equipped with the norm $\|\cdot\|_{\Var{I}}$ and analogously for $\Var{O}$ and $\|\cdot\|_\Var{O}$, then} the \emph{relative} condition number is $\kappa_\mathrm{rel}[f](x):= \kappa[f](x) \,\tfrac{\Vert x \Vert_{\Var{I}}}{\Vert f(x)\Vert_{\Var{O}}}$. We focus on computing the absolute condition number, as the relative condition number can be inferred from the absolute one.

\subsection{Condition number of approximation problems}
\changed{Our starting point is Rice's classic definition \cref{computational_problem2}, which is often easy to compute for differentiable maps, rather than Demmel and Renegar's inverse distance to ill-posedness, which is generally more difficult to compute. Nevertheless, Rice's definition cannot be applied unreservedly to measure how sensitive a solution of \cref{eqn_PIP} is to perturbations of the ambient input $a\in\R^n$. At first glance, it seems that we could apply \cref{computational_problem2} to
\[
\phi : \R^n \to \Var{O},\; a \mapsto \argmin_{y\in\Var{O}} \Vert F(y) - a \Vert^2.
\]%
However, this is an ill-posed expression, as $\phi$ is not a map in general! Indeed, there can be several global minimizers for an ambient input $a\in\R^n$ and their number can depend on $a$. Even if we could restrict the domain so that $\phi$ becomes a map, the foregoing formulation has two additional limitations.
First, when \cref{eqn_PIP} is a nonconvex Riemannian optimization problem, it is unrealistic to expect that we can solve it globally. Instead we generally find local minimizers whose sensitivity to perturbations of the input is also of interest. For our theory to be practical, we need to consider the condition number of every local minimizer separately. Exceeding this goal, we will present the condition number of each critical point in this paper. 
Second,} requiring that the systems in \cref{eqn_PIP} can be modeled by a smooth map $F : \Var{O} \to \Var{I}$ excludes some interesting computational problems. For example, computing the eigenvalues of a symmetric matrix cannot be modeled as a map from outputs to inputs: there are infinitely many symmetric matrices with a prescribed set of eigenvalues. 

\changed{The solution to foregoing complications consists of allowing \textit{multivalued} or \textit{set-valued} maps $g : \Var{I} \rightrightarrows \Var{O}$ in which one input $x\in\Var{I}$ can produce a set of outputs $Y\subset\Var{O}$ \cite{AF2009,DR2009}. Such maps are defined implicitly by the graph 
\[
 G = \{ (x,y) \in \Var{I}\times\Var{O} \mid y \in g(x) \}.
\] 
In numerical analysis, Wilkinson \cite{Wilkinson1963} and Wo\'{z}niakowski \cite{Wozniakowski1976} realized that the condition number of \emph{single-valued localizations} or local maps of $G$ could still be studied using the standard techniques.
Blum, Cucker, Shub, and Smale \cite{BCSS} developed a geometric framework for studying the condition number of single-valued localizations when the graph $G$ is an algebraic subvariety. More recently, B\"urgisser and Cucker \cite{BC2013} considered condition numbers when the graph $G$ is a smooth manifold. Because these graphs are topologically severely restricted, the condition number at an input--output pair $(x,y)\in G$ can be computed efficiently using only linear algebra.}

\changed{The heart of this paper consists of characterizing the condition number of critical points of PIPs (and an implicit generalization thereof) by employing the geometric approach to conditioning from \cite{BCSS,BC2013}. Therefore, we summarize its main points next.}

\subsection{Implicit smooth computational problems}
Following \cite{BCSS,BC2013}, we model an inverse problem with input manifold $\Var I$ and output manifold $\Var O$ implicitly by a \emph{solution manifold}:
\[ 
\Var{S} \subset \Var{I}\times \Var{O}.
\]
Given an input $x\in \Var{I}$, the problem is finding a corresponding output $y \in\Var O$ such that $(x,y) \in\Var{S}$.
As $\Var{S}$ models a computational problem, we can consider its condition number.
Now it is defined at $(x,y)\in \Var{S}$ and not only at the input~$x$, because the sensitivity to perturbations in the output depends on \textit{which} output is considered.

\changed{The condition number of (the computational problem modeled by) $\Var{S}$ is derived} in \cite{BCSS,BC2013}. Let $\pi_{\Var{I}}:\Var{S} \to \Var{I}$ and $\pi_{\Var{O}}:\Var{S} \to \Var{O}$ be the projections on the input and output manifolds, respectively. \changed{Recall that \cite{BCSS,BC2013} assume} that
$\mathrm{dim} \,\Var{S} =\mathrm{dim}\, \Var{I}$,
because otherwise almost all inputs $x$ have infinitely many outputs ($\mathrm{dim}\, \Var{S} > \mathrm{dim} \,\Var{I}$) or no outputs ($\mathrm{dim}\, \Var{S} < \mathrm{dim} \,\Var{I}$). Furthermore, we will assume that~$\pi_{\Var{I}}$ is surjective, which means that every input~$x\in\Var{I}$ has at least one output.
When \changed{the derivative}\footnote{See \cref{sec_preliminaries} for a definition.} $\deriv{\pi_{\Var{I}}}{(x,y)}$ is invertible, the inverse function theorem \cite[Theorem 4.5]{Lee2013} implies that $\pi_{\Var{I}}$ is locally invertible. Hence, there is a local smooth \emph{solution map} $\pi_{\Var{O}} \circ \pi_{\Var{I}}^{-1}$, which locally around $(x,y)$ makes the computational problem explicit. This local map has condition number \changed{$\kappa[\pi_{\Var{O}} \circ \pi_{\Var{I}}^{-1}](x)$ from \cref{computational_problem2}. Moreover, because the map is differentiable, Rice gives an expression for \cref{computational_problem2} in terms of the spectral norm of the differential in \cite[Theorem 4]{Rice1966}}. \changed{In summary, \cite{BCSS,BC2013} conclude that the condition number of $\Var{S}$ at $(x,y)\in\Var{S}$ is
\begin{align}\nonumber
\kappa[\Var{S}](x,y)&=
\begin{cases} \kappa[\pi_{\Var{O}} \circ \pi_{\Var{I}}^{-1}](x)  & \text{ if $\deriv{\pi_{\Var{I}}}{(x,y)}$ is invertible},\\
\infty & \text{ otherwise}\end{cases}\\
\label{def_kappa}
&= \| (\deriv{\pi_\Var{O}}{(x,y)}) (\deriv{\pi_{\Var{I}}}{(x,y)})^{-1} \|_{\Var{I}\to\Var{O}},
\end{align}
where the latter \textit{spectral norm} is defined below in \cref{eqn_spectral_norm}.
Note that if $\Var{S}$ is the graph of a smooth map $f:\Var{I}\to \Var{O}$, then $\kappa[\Var{S}](x,f(x))=\kappa[f](x)$.}

\changed{\begin{remark}
 By definition, \cref{def_kappa} evaluates to $\infty$ if $\deriv{\pi_\Var{I}}{(x,y)}$ is not invertible.
\end{remark}}

The points in 
$\Var{W} := \{(x,y)\in \Var{S} \mid \kappa[\Var{S}](x,y) < \infty\}$ are called \emph{well-posed tuples}. Points in $\Var S\backslash \Var W$ are called \emph{ill-posed tuples}. An infinitesimal perturbation in the input $x$ of an ill-posed tuple $(x,y)$ can cause an arbitrary change in the output $y$. It is therefore natural to assume that $\Var W\neq \emptyset$, because otherwise the computational problem is always ill-posed. If there exists $(x,y)\in\Var W$, it follows from the inverse function theorem that $\Var{W}$ is an open submanifold of $\Var{S}$, so $\dim \Var S = \dim \Var W$. We also assume that $\Var{W}$ is dense. Otherwise, since $\Var{S}$ is Hausdorff, around any data point outside of $\overline{\Var W}$ there would be an \textit{open} neighborhood on which the computational problem is ill-posed; we believe these input-output pairs should be removed from the definition of the problem.


In summary, we make the following assumption throughout this paper.

\begin{assumption}\label{ass_1}
The $m$-dimensional input manifold $\Var{I}$ is an embedded smooth submanifold of $\R^n$ equipped with the Euclidean inner product inherited from $\R^n$ as Riemannian metric. The output manifold~$\Var{O}$ is a smooth Riemannian manifold. The solution manifold $\Var{S} \subset \Var{I} \times \Var{O}$ is a smoothly embedded $m$-dimensional submanifold. The projection $\pi_{\Var{I}} : \Var{S} \to \Var{I}$ is surjective. The well-posed tuples $\Var{W}$ form an open dense embedded submanifold of~$\Var{S}$.
\end{assumption}

\changed{This assumption} is typically not too stringent. It is satisfied for example if $\Var{S}$ is the graph of a smooth map $f : \Var{I} \to \Var{O}$ or $F : \Var{O} \to \Var{I}$ (as for IPs) by \cite[Proposition 5.7]{Lee2013}. Another example is when $\Var{S}$ is the smooth locus of an algebraic subvariety of $\Var{I} \times \Var{O}$, as most problems in \cite{BC2013}.
Moreover, the condition of an input--output pair $(x,y)$ is a local property. Since every immersed submanifold is locally embedded \cite[Proposition 5.22]{Lee2013}, taking a restriction of the computational problem $\Var{S}$ to a neighborhood of $(x,y)$, in its topology as immersed submanifold, yields an embedded submanifold. The assumption and proposed theory always apply in \changed{this local sense}.

\subsection{Related work}
\changed{The condition number \cref{computational_problem2} was computed for several computational problems. Already in the 1960s, the condition numbers of solving linear systems, eigenproblems, linear least-squares problems, and polynomial rootfinding, among others were determined \cite{Wilkinson1963,Wilkinson1965,ANLA,matrix_computations,BC2013}. An astute observation by Demmel \cite{Demmel1987,Demmel1987a} in 1987 paved the way for the introduction of condition numbers in optimization. He realized that many condition numbers from linear algebra can be interpreted as an inverse distance to the nearest ill-posed problem. This characterization generalizes to contexts where an explicit map as required in \cref{computational_problem2} is not available.  Renegar \cite{Renegar1995a,Renegar1995b} exploited this observation to determine the condition number of linear programming instances and he connected it to the complexity of interior-point methods for solving them. This ignited further research into the sensitivity of optimization problems, leading to (Renegar's) condition numbers for linear programming \cite{Renegar1995a,Renegar1995b,CC2001}, conic optimization \cite{EF2002,CCP2008,PR2020,BF2009,Vera2014}, and convex optimization \cite{CC2009,AB2012,FO2005} among others. In many cases, the computational complexity of methods for solving these problems can be linked to the condition number; see \cite{Vera1996,CP2001,FV1999,Renegar1995a,Renegar1995b,EF2000}. In our earlier work \cite{BV2017b}, we connected the complexity of Riemannian Gauss--Newton methods for solving \cref{eqn_PIP} to the condition number \textit{of the exact inverse problem} $F(y)=x$. We anticipate that the analysis from \cite{BV2017b} could be refined to reveal the dependence on the condition number of the optimization problem \cref{eqn_PIP}.} 

\changed{Beside Renegar's condition number, other measures of sensitivity are studied in optimization. The sensitivity of a set-valued map $g : \Var{I} \rightrightarrows \Var{O}$ at $(x,y)$ with $y\in g(x)$ is often measured by the \textit{Lipschitz modulus} \cite[Chapter 3E]{DR2009}. This paper only studies the sensitivity of a single-valued localization $\hat{g}$ of the set-valued map $g$, in which case the Lipschitz modulus $K$ of $g$ at $(x,y)$ is a Lipschitz constant of $\hat{g}$ \cite[Proposition 3E.2]{DR2009}. Consequently, Rice's condition number $\kappa[\hat{g}](x)$, which we characterize in this paper for Riemannian approximation problems, is always upper bounded by $K$; see, e.g., \cite[Proposition 6.3.10]{HJE2017}.}

\subsection{Main contributions}
We characterize the condition number of computing global minimizers, local minimizers, and critical points of the following implicit generalization of \cref{eqn_PIP}:
\begin{align}\tag{IPIP}\label{eqn_IPIP}
 \pi_{\Var{O}} \circ \argmin_{(x,y)\in\Var{S}} \frac{1}{2} \| a - \pi_{\Var{I}}(x,y) \|^2,
\end{align}
where $a \in \R^n$ is a given ambient input and the norm is the Euclidean norm. We call this implicit formulation of \cref{eqn_PIP} a Riemannian implicit PIP.

\changed{Our main contributions are \cref{main2,main11,main3}. The condition number of critical points of  the Riemannian approximation problem $\min_{x\in\Var{O}} \| a - x \|^2$, is treated in \cref{main2}. 
\Cref{main11} contains the condition number of \cref{eqn_IPIP} at global minimizers when $a$ is sufficiently close to the input manifold. Finally, the most general case is \cref{main3}: it gives the condition number of \cref{eqn_IPIP} at all critical points including local and global minimizers}.

Aforementioned theorems will show that the way in which the input manifold $\Var{I}$ \textit{curves} in its ambient space $\R^n$, as measured by classic differential invariants, enters the picture.
To the best of our knowledge, the role of curvature in the theory of condition of smooth computational problems involving approximation was not previously identified. We see this is as one of the main contributions of this article.

\subsection{Approximation versus idealized problems}
The results of this paper raise a pertinent practical question: If one has a computational problem modeled by a solution manifold $\Var{S}$ that is solved via an optimization formulation as in \cref{eqn_IPIP}, which condition number is appropriate? \changed{The one of the idealized problem $\Var{S}$, where the inputs are constrained to $\Var{I}$? Or the more complicated condition number of the optimization problem, where we allow ambient inputs from $\Var{I}$'s ambient space $\R^n$}?
We argue that the choice is only an illusion.

The first approach should be taken if the problem is \textit{defined} implicitly by a solution manifold $\Var S\subset \Var I\times \Var O$ and the ambient inputs are perturbations $a\in\Var \R^n$ of inputs $x\in\Var{I}$. The idealized framework will still apply because of \cref{main1}.
Consider for example the $n$-camera triangulation problem, discussed in \cref{sec_triangulation}, where the problem $\Var S_\mathrm{MV}$ is defined by computing the inverse of the camera projection process.
If the measurements $a$ were taken with a perfect pinhole camera for which the theoretical model is exact, then even when $a\not \in\Var{I}_\mathrm{MV}$ (e.g., errors caused by pixelization), as long as it is a sufficiently small perturbation of some $x\in\Var I_\mathrm{MV}$, the first approach is suited.

On the other hand, if it is only \textit{postulated} that the computational problem can be well approximated by the \textit{idealized problem} $\Var{S}$, then the newly proposed framework for Riemannian approximation problems is the more appropriate choice. In this case, the ``perturbation'' to the input is a modeling error.
An example of this is applying $n$-camera triangulation to images obtained by cameras with lenses, i.e., most real-world cameras.
In this case, the computational problem is not $\Var{S}_\mathrm{MV}$, but rather a more complicated problem that takes into account lens distortions. When using $\Var{S}_\mathrm{MV}$ as a proxy, as is usual \cite{FL2001, HZ2003, Maybank1993}, the computational problem becomes an optimization problem. Curvature should be taken into account then, as developed here.

%

\subsection{Outline}
Before developing the main theory, we briefly recall the necessary concepts from differential geometry \changed{and fix the notation in the next section. A key role will be played by the Riemannian Hessian of the distance from an ambient input in $\R^n$ to the input manifold $\Var{I}$. Its relation to classic differential-geometric objects such as the Weingarten map is stated in \cref{sec_weingarten}. The condition number of critical points of \cref{eqn_IPIP} is first characterized in \cref{sec_results_2} for the special case where the solution manifold represents the identity map. The general case is studied in \Cref{sec_results_1,sec_results_3}; the former deriving the condition number of global minimizers, while the latter treats critical points.}
The expressions of the condition numbers in \cref{sec_results_1,sec_results_2,sec_results_3} are obtained from implicit formulations of the Riemannian IPIPs. 
For clarity, in the case of local minimizers, we also express them as condition numbers of global minimizers of localized Riemannian optimization problems in \cref{sec_optimization}. 
\changed{\Cref{sec_computational} explains a few standard techniques for computing the condition number in practice along with their computational complexity. Finally,} an application of our theory to the triangulation problem in multiview geometry is presented in \cref{sec_triangulation}.

\section{Preliminaries}  \label{sec_preliminaries}

We briefly recall the main concepts from Riemannian geometry that we will use. The notation introduced here will be used throughout the paper.
Proofs of the fundamental results appearing below can be found in \cite{Lee1997,Lee2013,ONeill1983,ONeill2001,riemannian_geometry,Petersen}.

\subsection{Manifolds} \label{sec_sub_manifold}
By \textit{smooth $m$-dimensional manifold} $\Var{M}$ we mean a $C^\infty$ topological manifold that is second-countable, Hausdorff, and locally Euclidean of dimension~$m$. The tangent space $\Tang{p}{\Var{M}}$ is the $m$-dimensional linear subspace of differential operators at $p$. A differential operator at $p\in\Var{M}$ is a linear map $v_p$ from the vector space $C^\infty(\Var{M})$ over $\R$ of smooth functions $f : \Var{M} \to \R$ to $\R$ that satisfies the product rule:
\(
 v_p (f \cdot g) = (v_p f) \cdot g(p) + f(p) \cdot (v_p g)
\)
for all $f, g \in C^\infty(\Var{M})$. If $\Var{M} \subset \R^n$ is \textit{embedded}, the tangent space can be identified with the linear span of all tangent vectors $\frac{\mathrm{d}}{\mathrm{d}t}|_{t=0} \gamma(t)$ where $\gamma \subset \Var{M}$ is a smooth curve passing through $p$ at $0$. 

A differentiable map $F : \Var{M} \to \Var{N}$ between manifolds $\Var{M}$ and $\Var{N}$ induces a linear map between tangent spaces $\deriv{F}{p}:\Tang{p}{\Var{M}}\to \Tang{F(p)}{\Var{M}}$, called the \emph{derivative} of $F$ at $p$. If $v_p\in \Tang{p}{\Var{M}}$, then $w_{F(p)} := (\deriv{F}{p})(v_p)$
is the differential operator $w_{F(p)}(f) := v_p(f\circ F)$ for all $f\in C^{\infty}(\Var N)$.

The identity map $\mathbf{1}_\Var{M} : \Var{M} \to \Var{M}$ will be denoted by $\mathbf{1}$, the space on which it acts being clear from the context.

\subsection{Tubular neighborhoods}\label{sec_sub_bundles}
Let $\Var{M}^m \subset \R^n$ be an embedded submanifold. The disjoint union of tangent spaces to $\Var{M}^m$ is the \textit{tangent bundle} of $\Var{M}$:
\[
 \mathrm{T}\Var{M} := \coprod_{p\in\Var{M}} \Tang{p}{\Var{M}} = \{ (p, v) \mid p \in \Var{M} \text{ and } v \in \Tang{p}{\Var{M}} \}.
\]
The tangent bundle is a smooth manifold of dimension $2m$. The \textit{normal bundle} is constructed similarly. The \textit{normal space} of $\Var{M}$ in $\R^n$ at $p$ is the orthogonal complement in $\R^n$ of the tangent space $\Tang{p}{\Var{M}}$, namely $\mathrm{N}_p \Var{M} := ( \Tang{p}{\R^n} )^\perp$. The normal bundle is a smooth embedded submanifold of $\mathbb{R}^n \times \mathbb{R}^n$ of dimension $n$. 
Formally, we write
\[
 \mathrm{N}\Var{M} := \coprod_{p \in \Var{M}} \mathrm{N}_{p} \Var{M} = \{ (p, \eta) \mid p \in \Var{M} \text{ and } \eta \in \mathrm{N}_{p} \Var{M} \}.
\]
Let $\delta : \Var{M} \to \R_+$ be a positive, continuous function. Consider the following open neighborhood of the normal bundle:
\begin{equation}\label{V_delta}
 \Var{V}_\delta = \{ (p, \eta_p) \in \mathrm{N}\Var{M} \mid \|\eta_p\| < \delta(p) \},
\end{equation}
where the norm is the one induced from the Riemannian metric; in our setting it is the standard norm on $\R^n$. The map $E : \mathrm{N}\Var{M} \to \R^n, (p,\eta) \mapsto p + \eta$ is smooth and there exists a $\delta$ such that the restriction $E|_{\Var{V}_\delta}$ becomes a diffeomorphism onto its image \cite[Theorem 6.24]{Lee2013}. Consequently, $\Var{T} = E(\Var{V}_\delta)$ is an $n$-dimensional, open, smooth, embedded submanifold of $\R^n$ that forms a neighborhood of $\Var{M}$. The submanifold $\Var{T}$ is called a \textit{tubular neighborhood} of $\Var{M}$ \changed{and $\delta$ is called its \textit{height} (function)}.

\subsection{Vector fields}\label{sec_sub_connection}
A \textit{smooth vector field} is a smooth map $X : \Var{M} \to \mathrm{T}\Var{M}$ from the manifold to its tangent bundle taking a point $p$ to a tangent vector $v_p$. By the notation $X|_p$ we mean $X(p)$. A smooth vector field $X$ on a properly embedded submanifold $\Var{M} \subset \Var{N}$ can be extended to a smooth vector field $\widehat{X}$ on $\Var{N}$ such that it agrees on $\Var{M}$: $\widehat{X}|_\Var{M} = X$. 

A \textit{smooth frame} of $\Var{M}^m$ is a tuple of $m$ smooth vector fields $(X_1, \ldots, X_m)$ that are linearly independent: $(X_1|_p, \ldots, X_m|_p)$ is linearly independent for all $p \in \Var{M}$. If these tangent vectors are orthonormal for all $p$, then the frame is called orthonormal.

Let $X$ be a smooth vector field on $\Var{M}$. An \textit{integral curve} of $X$ is a smooth curve $\gamma\subset\Var{M}$ such that $\deriv{\gamma}{t} = X|_{\gamma(t)}$ for all $t \in (-1,1)$. For every $p \in \Var{M}$ the vector field $X$ generates an integral curve $\gamma$ with starting point $p = \gamma(0)$. There is always a smooth curve \textit{realizing} a tangent vector $v_p\in \Tang{p}{\Var{M}}$, i.e., a curve $\gamma$ with $p = \gamma(0)$ and $v = \deriv{\gamma}{0}$.

\subsection{Riemannian manifolds}

A \textit{Riemannian manifold} is a smooth manifold $\Var{M}$ equip\-ped with a \textit{Riemannian metric} $g$: a positive definite symmetric bilinear form $g_p:\Tang{p}{\Var{M}}\times \Tang{p}{\Var{M}} \to \R$ that varies smoothly with $p$. The metric $g_p$ induces a norm on $\Tang{p}{\Var{M}}$ denoted $\Vert t_p \Vert_{\Var M} := \sqrt{g_p(t_p, t_p)}.$
The only Riemannian metric we will explicitly use in computations is the standard Riemannian metric of $\R^n$, i.e., the Euclidean inner product $g_p(x,y) = \langle x, y\rangle = x^T y$.

If $\Var{M}$ and $\Var N$ are Riemannian manifolds with induced norms $\Vert \cdot\Vert_{\Var M}$ and $\Vert \cdot \Vert_{\Var N}$, and if $F :\Var{M}\to \Var{N}$ is a differentiable map, then the \textit{spectral norm} of $\deriv{F}{p}$ is
\begin{align}\label{eqn_spectral_norm}
\| \deriv{F}{p} \|_{\Var{M}\to\Var{N}} := \sup_{t_p \in \Tang{p}{\Var{M}}\setminus \{0\}}\frac{\Vert (\deriv{F}{p})(t_p) \Vert_{\Var N}}{\Vert  t_p\Vert_{\Var M}}.
\end{align}
If the manifolds are clear from the context we can drop the subscript and write $\| \deriv{F}{p} \|$.

\subsection{Riemannian gradient and Hessian}

\changed{Let $\Var{M}$ be a Riemannian manifold with metric $g$, and let $f : \Var{M} \to \R$ be a smooth function. The \textit{Riemannian gradient} of $f$ at $p\in\Var{M}$ is the tangent vector $\nabla f|_p \in \Tang{p}{\Var{M}}$ that is dual to the derivative $\deriv{f}{p} : \Tang{p}{\Var{M}} \to \R$ under $g$, i.e., $g_p(\nabla f|_p, t) = (\deriv{f}{p})(t)$ for all $t\in\Tang{p}{\Var{M}}$. It is known that $\nabla f$ is a vector field, hence explaining the notation $\nabla f|_p$ for its value at $p\in\Var{M}$.}

\changed{When $\Var{M}\subset\R^n$ is a Riemannian embedded submanifold with the inherited Euclidean metric, the \textit{Riemannian Hessian} \cite[Definition 5.5.1]{AMS2008} can be defined without explicit reference to connections, as in \cite[Chapter 5]{Boumal2020}:
\[
 \mathrm{Hess}_p (f) : \Tang{p}{\Var{M}} \to \Tang{p}{\Var{M}},\; \eta_p \mapsto \mathrm{P}_{\Tang{p}{\Var{M}}} \left( (\deriv{\nabla f}{p})(\eta_p) \right),
\]
where $\nabla f$ now has to be viewed as a vector field on $\R^n$ and $\mathrm{P}_{\Tang{p}{\Var{M}}}$ projects orthogonally onto the tangent space $\Tang{p}{\Var{M}}$. In other words, the Hessian takes $\eta_p$ to $\mathrm{P}_{\Tang{p}{\Var{M}}} \left( \frac{\mathrm{d}}{\mathrm{dt}}|_{t=0} {\nabla f}|_{\gamma(t)} \right)$
where $\gamma(t) \subset \Var{M}$ is a curve realizing the tangent vector $\eta_p$.}

\changed{The Hessian $\mathrm{Hess}_p(f)$ is a symmetric bilinear form on the tangent space $\Tang{p}{\Var{M}}$ whose (real) eigenvalues contain essential information about the objective function $f$ at $p$. If the Hessian is positive definite, negative definite, or indefinite at a \textit{critical point} $p$ of $f$ (i.e., $\nabla f|_p = 0$ or, equivalently, $\deriv{f}{p}=0$), then $p$ is respectively a local minimum, local maximum, or saddle point of $f$.}

\section{Measuring curvature with the Weingarten map} \label{sec_weingarten}

\changed{A central role in this paper is played by the Riemannian Hessian of the 
distance function
\begin{align}\label{eqn_distance_function}
d : \Var{I} \to \R,\; x \mapsto \frac{1}{2}\|x - a\|^2,
\end{align}
which measures the Euclidean distance between a fixed point $a \in \R^n$ and points on the embedded submanifold $\Var{I} \subset \R^n$. It is the inverse of this Riemannian Hessian matrix that will appear as an additional factor to be multiplied with $\deriv{\pi_\Var{O}}{(x,y)} (\deriv{\pi_\Var{I}}{(x,y)})^{-1}$ from \cref{def_kappa} in the condition numbers of the problems studied in \cref{sec_results_1,sec_results_2,sec_results_3}.

The Riemannian Hessian of $d$ contains classic differential-geometric information about the way $\Var{I}$ curves inside of $\R^n$ \cite{Petersen}. Indeed, from \cite[Equation (10)]{AMT2013} we conclude that it can be expressed at $x\in\Var{I}$ as
\[
\mathrm{Hess}_x (d) = \mathbf{1}_{\Tang{x}{\Var{I}}} + \SFF_x\left( \mathrm{P}_{\NSp{x}{\Var{M}}} (x-a) \right),
\]
where $\mathrm{P}_{\NSp{x}{\Var{M}}}$ is the projection onto the normal space, and
$\SFF_x : \NSp{x}{\Var{M}} \to S^2( \Tang{x}{\Var{M}} )$ is a classical differential invariant called the \textit{second fundamental form} \cite{Lee1997,ONeill1983,ONeill2001,riemannian_geometry,Petersen}, which takes a normal vector in $\NSp{x}{\Var{M}}$ and sends it to a symmetric bilinear form on the tangent space $\Tang{x}{\Var{M}}$. If we let $\eta_x := a - x$, then at a critical point of $d$ we have $\eta_x \in \NSp{x}{\Var{I}}$, so that the Hessian of interest satisfies
\begin{equation}\tag{H}\label{eqn_Hess_distance}
 H_{\eta} := \mathrm{Hess}_x(d) = \mathbf{1} - S_{\eta},\quad\text{where } S_\eta := \SFF_x(\eta_x).
\end{equation}
Note that we dropped the base point $x\in\Var{I}$ from the notation in $H_\eta$ and $S_\eta$. In the above, $S_\eta$ is interpreted as a symmetric endomorphism on $\Tang{x}{\Var{I}}$ and it is classically called the \textit{shape operator} or \textit{Weingarten map} \cite{Lee1997,ONeill1983,ONeill2001,riemannian_geometry,Petersen}.}

\changed{The Weingarten map measures how much the embedding into the ambient space $\R^n$ bends the manifold $\Var{I}^m$ in the following concrete way \cite[Chapter~8]{Lee1997}.}
For $\eta\neq 0$, let $w:=\tfrac{\eta}{\Vert \eta\Vert}$ and let $c_1,\ldots,c_m$ be the eigenvalues of~$S_w = \tfrac{1}{\Vert \eta\Vert} S_\eta$. As $S_w$ is self-adjoint, all $c_i$'s are all real. The eigenvalues of~$S_\eta$ are then $c_1\Vert \eta\Vert ,\ldots,c_m\Vert \eta\Vert$. The $c_i$'s are called the \emph{principal curvatures} of $\Var{I}$ at $x$ in direction of~$\eta$. They have the following classic geometric meaning \cite[Chapter 1]{SpivakVol2}: If $u_i\in \mathrm{T}_x \Var{I}$ is a unit length eigenvector of the eigenvalue~$c_i$,
then locally around $x$ and in the direction of $u_i$ the manifold $\Var{I}$ contains an infinitesimal arc of a circle with center $x+c_i^{-1}w =x+(c_i\Vert \eta\Vert)^{-1}\eta$ through $x$. This circle is called an \emph{osculating circle}. The radii $r_i := \vert c_i\vert^{-1}$ of those circles are called the \emph{critical radii} of~$\Var{I}$ at $x$ in direction $\eta$. See also \cref{fig3}.

\changed{For several submanifolds $\Var{I}\subset\R^n$ relevant in optimization applications, the Weingarten map $S_\eta$ was computed. Absil, Mahoney, and Trumpf \cite{AMT2013} provide expressions for the sphere, Stiefel manifold, orthogonal group, Grassmann manifold, and low-rank matrix manifold. Feppon and Lermusiaux \cite{FL2019} further extended these results by computing the Weingarten maps and explicit expressions for the principal curvatures of the isospectral manifold and ``bi--Grassmann'' manifold, among others.
Heidel and Schultz \cite{HS2018} computed the Weingarten map of the manifold of higher-order tensors of fixed multilinear rank \cite{HRS2012}.}

\section{The condition number of critical point problems} \label{sec_results_2}

\changed{Prior to treating \cref{eqn_IPIP} in general, we focus on an important and simpler special case.}
This case arises when the solution manifold $\Var{S} \subset \Var{I} \times \Var{O}$ is the graph of the identity map $\mathbf{1} : \Var{I} \to \Var{I}$, so $\Var{O} =\Var{I}$. Given an ambient input $a \in \R^n$, the problem \cref{eqn_IPIP} now comprises finding an output $x \in \Var{I}$ so that \changed{the distance function $d$ from \cref{eqn_distance_function}} is minimized. This is can be formulated as the standard Riemannian least-squares problem 
\begin{align}\label{eqn_basic_riemannian}
 \min_{x \in \Var{I}} \frac{1}{2} \| x - a \|^2.
\end{align}
Computing \textit{global} minimizers of this optimization problem is usually too ambitious a task when $\Var{I}$ is a complicated nonlinear manifold. Most Riemannian optimization methods \changed{will converge to local minimizers by seeking} to satisfy the first-order necessary optimality conditions of \cref{eqn_basic_riemannian}, namely $a-x\in\mathrm{N}_x \Var{I}$. We say that such methods attempt to solve the \textit{critical point problem} (CPP) whose implicit formulation is
\begin{align} \label{eqn_cpp} \tag{CPP}
\Var{S}_\mathrm{CPP} := \left\{ (a,x) \in \mathbb{R}^n \times \Var{I}  \mid  a - x \in \mathrm{N}_{x} \Var{I}\right\}.
\end{align}

We call $\Var{S}_\mathrm{CPP}$ the \textit{critical point locus}. \changed{It is an exercise to show that $\Var{S}_\mathrm{CPP}$ is linearly diffeomorphic to the normal bundle $\mathrm{N}\Var{I}$ so that we have the following result.}

\begin{lemma} \label{C_is_a_manifold}
$\Var{S}_\mathrm{CPP}$ is an embedded submanifold of $\R^n\times \Var{I}$ of dimension $n$.
\end{lemma}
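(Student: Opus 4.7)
The plan is to realize $\Var{S}_\mathrm{CPP}$ explicitly as the image of the normal bundle $\mathrm{N}\Var{I}$ under a linear diffeomorphism of the ambient space $\R^n\times\R^n$, and then inherit the submanifold structure.

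Concretely, I would introduce the linear map
\[
L : \R^n\times\R^n \to \R^n\times\R^n, \qquad L(x,\eta) := (x+\eta,\, x),
\]
which is a vector space automorphism with inverse $L^{-1}(a,x) = (x,\, a-x)$, and in particular a smooth diffeomorphism of $\R^n\times\R^n$. A direct check unwinds the definitions: if $(x,\eta)\in\mathrm{N}\Var{I}$, i.e.\ $x\in\Var{I}$ and $\eta\in\NSp{x}{\Var{I}}$, then $a:=x+\eta$ satisfies $a-x=\eta\in\NSp{x}{\Var{I}}$, so $L(x,\eta)\in\Var{S}_\mathrm{CPP}$; conversely, given $(a,x)\in\Var{S}_\mathrm{CPP}$, the pair $(x,a-x)$ lies in $\mathrm{N}\Var{I}$ and maps to $(a,x)$ under $L$. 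Thus $L(\mathrm{N}\Var{I})=\Var{S}_\mathrm{CPP}$.

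By \cref{sec_sub_bundles}, $\mathrm{N}\Var{I}$ is a smoothly embedded $n$-dimensional submanifold of $\R^n\times\R^n$. Since $L$ is a global diffeomorphism of $\R^n\times\R^n$, it maps embedded submanifolds to embedded submanifolds of the same dimension, so $\Var{S}_\mathrm{CPP}=L(\mathrm{N}\Var{I})$ is an $n$-dimensional embedded submanifold of $\R^n\times\R^n$. Because $\R^n\times\Var{I}$ is itself embedded in $\R^n\times\R^n$ and $\Var{S}_\mathrm{CPP}\subset\R^n\times\Var{I}$, the subspace topology and smooth structure that $\Var{S}_\mathrm{CPP}$ inherits from $\R^n\times\R^n$ coincide with those it inherits from $\R^n\times\Var{I}$, yielding the claim.

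There is no real obstacle: the entire argument is a bookkeeping exercise once one spots the ``shear'' map $L$. The only point that merits a brief sentence in the write-up is why embeddedness in $\R^n\times\R^n$ transfers to embeddedness in $\R^n\times\Var{I}$, which follows immediately from $\Var{I}\hookrightarrow\R^n$ being an embedding.
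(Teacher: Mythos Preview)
Your proposal is correct and matches the paper's own argument: the paper simply remarks that $\Var{S}_\mathrm{CPP}$ is linearly diffeomorphic to the normal bundle $\mathrm{N}\Var{I}$, which is exactly the shear map $L$ you wrote down. Your write-up is in fact more detailed than the paper's one-line justification, including the observation about transferring embeddedness from $\R^n\times\R^n$ to $\R^n\times\Var{I}$.
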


Consequently, the CPP falls into the realm of the theory of condition from~\cref{def_kappa}. We denote the coordinate projections of $\Var{S}_\mathrm{CPP}$ by $\Pi_{\R^n}: \Var{S}_\mathrm{CPP} \to \mathbb{R}^n$ and
 $\Pi_\Var{I}: \Var{S}_\mathrm{CPP} \to \Var{I}$ for distinguishing them from the projections associated to $\Var{S}$. The corresponding submanifold of well-posed tuples is
\[
 \Var{W}_\mathrm{CPP} := \{ (a,x) \in \Var{S}_\mathrm{CPP}  \mid \deriv{\Pi_{\R^n}}{(a,x)} \text{ is invertible} \}.
\]
The fact that it is a manifold is the following result, proved in \cref{sec_proofs_thm3}.
\begin{lemma}\label{lem_WCPP_is_manifold}
 $\Var{W}_\mathrm{CPP}$ is an open dense embedded submanifold of $\Var{S}_\mathrm{CPP}$.
\end{lemma}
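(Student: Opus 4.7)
The plan is to exploit the linear diffeomorphism $\phi : \Var{S}_\mathrm{CPP} \to \mathrm{N}\Var{I}$, $(a,x)\mapsto(x, a-x)$, which underlies \cref{C_is_a_manifold}. Under $\phi$, the projection $\Pi_{\R^n}$ becomes the tubular-neighborhood map $E : \mathrm{N}\Var{I} \to \R^n$, $(x,\eta)\mapsto x+\eta$ from \cref{sec_sub_bundles}. Thus $\Var{W}_\mathrm{CPP}$ corresponds to the set of $(x,\eta) \in \mathrm{N}\Var{I}$ at which $\deriv{E}{(x,\eta)}$ is invertible, and the whole task reduces to studying the critical locus of $E$.

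The key computation is to describe $\deriv{E}{(x,\eta)}$ in suitable bases. Working in local coordinates on $\Var{I}$ with a tangent basis $X_1,\ldots,X_m$ at $x$ and a local orthonormal normal frame $N_1,\ldots,N_{n-m}$, and using the split $\R^n = \Tang{x}{\Var{I}} \oplus \NSp{x}{\Var{I}}$, I expect to obtain the block form
\[
\deriv{E}{(x,\eta)} \;=\; \begin{pmatrix} H_\eta & 0 \\ \ast & \mathbf{1}_{n-m} \end{pmatrix},
\]
where $H_\eta = \mathbf{1} - S_\eta$ is exactly the Hessian of \cref{eqn_Hess_distance}. The upper-left block appears because, upon differentiating $x+\sum_j c_j N_j(x)$ along a tangent direction $X_i$, the tangent component of $\partial_{X_i} N_j$ is (up to sign) the $j$-th coordinate of $S_\cdot(X_i)$; weighting by the coefficients of $\eta$ gives exactly $-S_\eta(X_i)$. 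Consequently, $\det \deriv{E}{(x,\eta)} = \det H_\eta$, so $(a,x)\in \Var{W}_\mathrm{CPP}$ if and only if $H_{a-x}$ is invertible on $\Tang{x}{\Var{I}}$.

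Openness of $\Var{W}_\mathrm{CPP}$ in $\Var{S}_\mathrm{CPP}$ then follows immediately, since it is the preimage of $\R\setminus\{0\}$ under the continuous function $(a,x)\mapsto \det H_{a-x}$; as an open subset of an embedded $n$-dimensional submanifold of $\R^n \times \Var{I}$, it automatically inherits the embedded submanifold structure. For density, fix an arbitrary $(a_0,x_0) \in \Var{S}_\mathrm{CPP}$ and set $\eta_0 = a_0-x_0 \in \NSp{x_0}{\Var{I}}$. The line $t \mapsto (x_0+t\eta_0, x_0)$ lies entirely in $\Var{S}_\mathrm{CPP}$ because $t\eta_0 \in \NSp{x_0}{\Var{I}}$, and along it $H_{t\eta_0} = \mathbf{1} - t S_{\eta_0}$ is singular precisely when $1/t$ is one of the at most $m$ eigenvalues of $S_{\eta_0}$. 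Hence we may pick $t$ arbitrarily close to $1$ with $H_{t\eta_0}$ invertible, producing points of $\Var{W}_\mathrm{CPP}$ arbitrarily close to $(a_0,x_0)$.

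The main obstacle is rigorously establishing the block-triangular form of $\deriv{E}{(x,\eta)}$. This requires care in choosing the local trivialization of $\mathrm{N}\Var{I}$ and in identifying the tangent components of derivatives of normal frame fields with the shape operator $S_\eta$ in the correct sign and normalization convention; the lower-left block $\ast$, though nonzero and dependent on the chosen normal frame, conveniently plays no role. Once this identification is in hand, the openness and density arguments are short and independent of any deeper geometric machinery.
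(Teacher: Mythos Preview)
Your proposal is correct and follows essentially the same route as the paper. The density argument via the line $t\mapsto(x_0+t\eta_0,x_0)$ and the eigenvalue count for $\mathbf{1}-tS_{\eta_0}$ is identical to the paper's. The one organizational difference is that the paper does not compute the block-triangular form of $\deriv{E}{(x,\eta)}$ directly; instead it invokes the classical identity $\mathrm{P}_{\Tang{x}{\Var{I}}}(\dot x+\dot\eta)=H_\eta\dot x$ (stated separately as \cref{thm_ap_derivative0}) and packages the equivalence ``$\deriv{\Pi_{\R^n}}{(a,x)}$ invertible $\Leftrightarrow H_\eta$ invertible'' as a standalone lemma (\cref{lem_character_1-S}), so that openness follows from the inverse function theorem without ever writing down a matrix. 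Your block computation is exactly that classical identity in coordinates, so the content is the same; your version is more self-contained, the paper's is more modular.
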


\changed{We show in \cref{sec_proofs_thm3} that the condition number of this problem can be characterized succinctly as follows.\footnote{\changed{We take the convention that whenever an expression $\|\cdot\|_{\Var{I}\to\Var{O}}$ is ill-posed it evaluates to $\infty$.}}}

\begin{theorem}[Critical points]\label{main2}
Let $(a,x)\in \Var{S}_\mathrm{CPP}$ and $\eta:=a-x$. Then,
\[
\kappa[\Var{S}_{\mathrm{CPP}}](a,x) = \Vert H_{\eta}^{-1}  \Vert_{\Var{I}\to\Var{I}},
\]
where $H_\eta$ is given by \cref{eqn_Hess_distance}. Moreover, if $(a,x) \in \Var{S}_\mathrm{CPP}\setminus\Var{W}_\mathrm{CPP}$, then $H_{\eta}$ is not invertible.
\end{theorem}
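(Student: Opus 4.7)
The plan is to exploit the linear diffeomorphism $\Phi : \mathrm{N}\Var{I} \to \Var{S}_\mathrm{CPP}$, $(x,\eta) \mapsto (x+\eta, x)$, noted just before \cref{C_is_a_manifold}. This transfers the computation to the normal bundle, whose tangent space admits a clean description via the Weingarten equation. Under this identification the entire argument reduces to a short linear-algebraic calculation in the orthogonal decomposition $\R^n = \Tang{x}{\Var{I}} \oplus \NSp{x}{\Var{I}}$.

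The first step is to establish the following description of $\Tang{(x,\eta)}{\mathrm{N}\Var{I}}$: a pair $(\dot{x}, \dot{\eta}) \in \R^n \times \R^n$ is tangent to $\mathrm{N}\Var{I}$ at $(x,\eta)$ if and only if $\dot{x} \in \Tang{x}{\Var{I}}$ and $\mathrm{P}_{\Tang{x}{\Var{I}}}(\dot{\eta}) = -S_\eta(\dot{x})$, with the normal component $\nu := \mathrm{P}_{\NSp{x}{\Var{I}}}(\dot{\eta})$ free. This follows by differentiating the defining relation $\langle \eta(t), V(t) \rangle = 0$ along a curve $\gamma(t)\subset\Var{I}$ with $\dot{\gamma}(0)=\dot{x}$ and a tangent vector field $V(t)\in\Tang{\gamma(t)}{\Var{I}}$ with $V(0)=v$, and then invoking the defining identity $\langle \SFF_x(\dot{x}, v), \eta \rangle = \langle S_\eta(\dot{x}), v \rangle$ of the shape operator.

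Composing the projections with $\Phi$ gives $\Pi_{\R^n}\circ\Phi(x,\eta) = x+\eta$ and $\Pi_\Var{I}\circ\Phi(x,\eta) = x$, so their derivatives send a tangent vector $(\dot{x},\, -S_\eta(\dot{x})+\nu)$ to $\dot{x}+\dot{\eta} = H_\eta(\dot{x}) + \nu$ and $\dot{x}$, respectively, where $H_\eta = \mathbf{1} - S_\eta$ is precisely \cref{eqn_Hess_distance}. Because the image $H_\eta(\dot{x})+\nu$ is already written in the orthogonal decomposition $\R^n=\Tang{x}{\Var{I}}\oplus\NSp{x}{\Var{I}}$, the map $\deriv{(\Pi_{\R^n}\circ\Phi)}{(x,\eta)}$ is block-diagonal with blocks $H_\eta$ and $\mathbf{1}$. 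Hence $\deriv{\Pi_{\R^n}}{(a,x)}$ is invertible if and only if $H_\eta$ is, which proves the second assertion of the theorem.

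When $H_\eta$ is invertible, the composition $(\deriv{\Pi_\Var{I}}{(a,x)}) \circ (\deriv{\Pi_{\R^n}}{(a,x)})^{-1}$ sends $\dot{a}\in\R^n$ to $H_\eta^{-1}\bigl(\mathrm{P}_{\Tang{x}{\Var{I}}}(\dot{a})\bigr)$. Since $\|\dot{a}\|^2 = \|\mathrm{P}_{\Tang{x}{\Var{I}}}(\dot{a})\|^2 + \|\mathrm{P}_{\NSp{x}{\Var{I}}}(\dot{a})\|^2$ and the image depends only on the tangential part, the supremum in \cref{eqn_spectral_norm} is attained on $\dot{a}\in\Tang{x}{\Var{I}}$, yielding $\kappa[\Var{S}_\mathrm{CPP}](a,x) = \|H_\eta^{-1}\|_{\Var{I}\to\Var{I}}$. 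I expect the only genuinely delicate point to be justifying the Weingarten description of $\Tang{(x,\eta)}{\mathrm{N}\Var{I}}$; every subsequent step is a direct computation.
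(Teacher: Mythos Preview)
Your proposal is correct and follows essentially the same approach as the paper. The paper packages your tangent-space description of $\mathrm{N}\Var{I}$ as a separate classical result (\cref{thm_ap_derivative0}, attributed to Weyl) stating $\mathrm{P}_{\Tang{x}{\Var{I}}}(\dot{x}+\dot{\eta}) = H_\eta \dot{x}$, and isolates the invertibility equivalence as \cref{lem_character_1-S}; you derive both inline via the block-diagonal structure of $\deriv{(\Pi_{\R^n}\circ\Phi)}{(x,\eta)}$, but the substance is identical and both proofs arrive at $(\deriv{\Pi_\Var{I}}{(a,x)})(\deriv{\Pi_{\R^n}}{(a,x)})^{-1} = H_\eta^{-1}\mathrm{P}_{\Tang{x}{\Var{I}}}$ before taking the spectral norm.
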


\begin{corollary}
$\kappa[\Var{S}_\mathrm{CPP}] : \Var{S}_\mathrm{CPP} \to \R \cup \{+\infty\}$ is continuous \changed{and $\kappa[\Var{S}_\mathrm{CPP}](x,y)$ is finite if and only if $(x,y)\in\Var{W}_\mathrm{CPP}$}.
\end{corollary}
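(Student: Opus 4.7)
The plan is to reduce both claims (finiteness and continuity) to the closed-form expression $\kappa[\Var{S}_\mathrm{CPP}](a,x) = \|H_\eta^{-1}\|_{\Var{I}\to\Var{I}}$ provided by \cref{main2}, together with the ``moreover'' clause asserting that $H_\eta$ fails to be invertible on $\Var{S}_\mathrm{CPP}\setminus\Var{W}_\mathrm{CPP}$. Once the condition number is identified with the spectral norm of the inverse of a symmetric operator that depends smoothly on $(a,x)$, both assertions become standard facts about perturbation of eigenvalues.

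For the characterization of finiteness, I would proceed as follows. By \cref{main2} and the convention that $\|\cdot\|_{\Var{I}\to\Var{I}}$ evaluates to $\infty$ when applied to a non-invertible operator, the value $\kappa[\Var{S}_\mathrm{CPP}](a,x)$ is finite if and only if $H_\eta$ is invertible. The ``moreover'' part of \cref{main2} states that $(a,x) \notin \Var{W}_\mathrm{CPP}$ forces $H_\eta$ to be singular; its contrapositive gives one direction. The other direction is immediate from the definition of $\Var{W}_\mathrm{CPP}$: if $\deriv{\Pi_{\R^n}}{(a,x)}$ is invertible, then \cref{def_kappa} expresses $\kappa$ as the spectral norm of a bounded operator, which is finite.

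For continuity, I would work in local coordinates. Around any base point $(a_0, x_0) \in \Var{S}_\mathrm{CPP}$, choose a smooth local orthonormal frame $(E_1,\ldots,E_m)$ of $\mathrm{T}\Var{I}$ near $x_0$. Because the second fundamental form is a smooth $(2,1)$-tensor field on $\Var{I}$ and $\eta = a-x$ depends smoothly on $(a,x)$, the matrix representation of $H_\eta = \mathbf{1} - \SFF_x(\eta)$ with respect to this frame depends smoothly, and in particular continuously, on $(a,x) \in \Var{S}_\mathrm{CPP}$. The map sending a symmetric matrix $M$ to its smallest singular value $\sigma_\min(M)$ is continuous (e.g.\ by Weyl's inequality), and the extended reciprocal map $t \mapsto 1/t$ from $[0,\infty)$ to $(0,+\infty]$ is continuous in the order topology on $[0,+\infty]$. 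Composing these with the smooth assignment $(a,x) \mapsto H_\eta$ yields continuity of $(a,x) \mapsto \|H_\eta^{-1}\| = 1/\sigma_\min(H_\eta) = \kappa[\Var{S}_\mathrm{CPP}](a,x)$, including at ill-posed tuples where the value is $+\infty$.

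The main (mild) obstacle is purely bookkeeping: the operators $H_\eta$ act on different tangent spaces $\Tang{x}{\Var{I}}$ as $x$ varies, so one cannot literally compare them as endomorphisms of a fixed vector space. Introducing a local orthonormal frame trivializes this dependence and reduces the problem to continuity of the smallest singular value of a smoothly varying symmetric matrix, which is standard. No new ideas beyond \cref{main2} are required.
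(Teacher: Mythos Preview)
Your proposal is correct. The paper states this corollary without proof, treating it as an immediate consequence of \cref{main2}; your argument is precisely the natural elaboration of that claim, so there is nothing to compare against and no gap to flag.
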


Curvature has entered the picture in \cref{main2} in the form of \changed{the Riemannian Hessian $H_\eta$ from \cref{eqn_Hess_distance}.}
It is an \textit{extrinsic} property of $\Var{I}$, however, so that the condition number depends on the Riemannian embedding of $\Var{I}$ into $\R^n$. Intuitively this is coherent, as different embeddings give rise to different computational problems each with their own sensitivity to perturbations in the ambient space.

Let us investigate the meaning of the foregoing theorem a little further. \changed{Recall, from \cref{sec_weingarten}, the definition of the principal curvatures $c_i$ as the eigenvalues of the Weingarten map $S_{w}$ where $w = \frac{\eta}{\|\eta\|}$ and $\eta=a-x$. Then,} \cref{main2} implies that
\begin{equation}\label{formula_for_kappa_CPP}
\kappa[\Var{S}_{\mathrm{CPP}}](a,x) = \max_{1\leq i\leq m} \frac{1}{\big\vert 1-c_i\Vert \eta\Vert\big\vert},
\end{equation}
\changed{so its condition is completely determined by the amount by which $\Var{I}$ curves inside of $\R^n$ and the distance of the ambient input $a$ to the critical point $x\in\Var{I}$.}
\changed{We see that} $\kappa[\Var{S}_{\mathrm{CPP}}](a,x) = \infty$ if and only if $\Var{I}$ contains an infinitesimal arc of the circle with center $x+\eta = a$ \changed{(so that there is an $i$ with $c_i \Vert \eta\Vert = 1$). Consequently,} if $\|\eta\|$ lies close to any of the critical radii $r_i = |c_i|^{-1}$ then the CPP is ill-conditioned at $(a,x)$.

The formula from \cref{formula_for_kappa_CPP} makes it apparent that for some inputs we may have that $\kappa[\Var{S}_{\mathrm{CPP}}](a,x)<1$. In other words, for some ambient inputs $a\in \mathbb{R}^n$ the infinitesimal error $\Vert \Delta a\Vert$ can shrink! \Cref{fig3} gives a geometric explanation of when the error is amplified (panel (a)) and when it is shrunk (panels (b) and (c)).

\begin{figure}
  \begin{minipage}{0.32\textwidth}
    \begin{tikzpicture}[scale = 1.54]
      \draw[black]   plot[smooth,domain=-1.2:1.2] (\x, {(\x)*(\x)});
      \draw[black]  (0,1/2) circle[radius=1/2];

      \draw[black]  (0,0) circle[radius=1.25pt] node[below, xshift=5pt] {$x$};
      \fill[white]  (0,0) circle[radius=1.24pt];
      \draw[->] (0,0.3)  -- (0,0.1);

      \fill[gray]  (0,1/2) circle[radius=1.25pt];
      \fill[black]  (0,0.4) circle[radius=1.25pt] node[right] {$a$};

      \draw[->] (-0.1,0.4)  -- (-0.19,0.4)  node[left, fill=white] {$a+\Delta a$};
      \draw[->] (-0.2,0.35)  -- (-0.3,0.1125);
      \draw[->] (-0.1,0)  -- (-0.35,0) node[left] {$x+\Delta x$};
      \draw (0,-1/2) node {$\phantom{\Delta a}$};
 \end{tikzpicture}
 \centering (a)
 \end{minipage}
 \begin{minipage}{0.32\textwidth}
     \begin{tikzpicture}[scale = 1.54]
        \draw[black]   plot[smooth,domain=-3:-0.6] (\x, {(\x+1.8)*(\x+1.8)});
        \draw[black]  (-1.8,1/2) circle[radius=1/2];
        
        \fill[gray]  (-1.8,1/2) circle[radius=1.25pt];

        \draw[black]  (-1.8,0) circle[radius=1.25pt] node[below, xshift=7pt] {$x$};
        \fill[white]  (-1.8,0) circle[radius=1.24pt];
        \draw[->] (-1.8,-0.4)  -- (-1.8,-0.1);
        \fill[black]  (-1.8,-1/2) circle[radius=1.25pt] node[right] {$a$};

        \draw[->] (-1.9,-1/2)  -- (-2.3,-1/2)  node[left] {$a+\Delta a$};
        \draw[->] (-2.3,-0.4)  -- (-2.15,-0.1);
        \draw[->] (-1.9,0)  -- (-2.1,0) node[left] {$x+\Delta x$};
\end{tikzpicture}
\centering (b)
 \end{minipage}
 \begin{minipage}{0.32\textwidth}
\begin{tikzpicture}[scale = 1.54]
        \draw[black]   plot[smooth,domain=0.6:3] (\x, {(\x-1.8)*(\x-1.8)});
        \draw[black]  (1.8,1/2) circle[radius=1/2];
        \fill[gray]  (1.8,1/2) circle[radius=1.25pt];

        \draw[black]  (1.8,0) circle[radius=1.25pt] node[below, xshift=-5pt] {$x$};
        \fill[white]  (1.8,0) circle[radius=1.24pt];
        \draw[->] (1.8,1.1)  -- (1.8,0.1);
        \fill[black]  (1.8,5/4) circle[radius=1.25pt] node[right] {$a$};

        \draw[->] (1.7,5/4)  -- (1.4,5/4)  node[left, fill=white] {$a+\Delta a$};
        \draw[->] (1.4,1.1)  -- (2.1,0.2);
        \draw[->] (1.9,0)  -- (2.1,0) node[right] {$x+\Delta x$};
        \draw (0.6,-1/2) node {$\phantom{\Delta a}$};
 \end{tikzpicture}
 \centering (c)
 \end{minipage}
\caption{\label{fig3} The graphs shows ambient inputs $a$ for the CPP of the parabola. The osculating circle with critical radius lies above the parabola, and its center (the gray point) is a focal point of the parabola, i.e., it is a point in $\Sigma_\mathrm{CPP} = \Var{S}_\mathrm{CPP} \setminus \Var{W}_\mathrm{CPP}$. 
  \changed{In graph (a), the ambient input} $a$ also lies above the parabola, so $\eta=a-x$ points towards the focal point. This means that the principal curvature at $x$ in direction of $\eta$ is $c_1>0$. Moreover, $c_1\Vert \eta\Vert <1$, and thus, by \cref{formula_for_kappa_CPP}, we have $\kappa[\Var{S}_{\mathrm{CPP}}](a,x)>1$. In other words, the curvature of the parabola amplifies the perturbation $\Vert \Delta a\Vert$. \changed{In graph (b), the ambient input} $a$ lies below the parabola, so $\eta=a-x$ points away from the focal point. This means that the principal curvature at $x$ in direction of $\eta$ is negative, so that by \cref{formula_for_kappa_CPP} we have $\kappa[\Var{S}_{\mathrm{CPP}}](a,x)<1$. The curvature of the parabola shrinks the perturbation $\Vert \Delta a\Vert$. \changed{In graph (c), the ambient input} $a$ lies above the parabola, so that the corresponding critical curvature is positive. However, now $c_1\Vert \eta\Vert >1$, and so by \cref{formula_for_kappa_CPP} we have $\kappa[\Var{S}_{\mathrm{CPP}}](a,x)<1$. Again, the curvature of the parabola shrinks the perturbation $\Vert \Delta a\Vert$.} 
\end{figure}
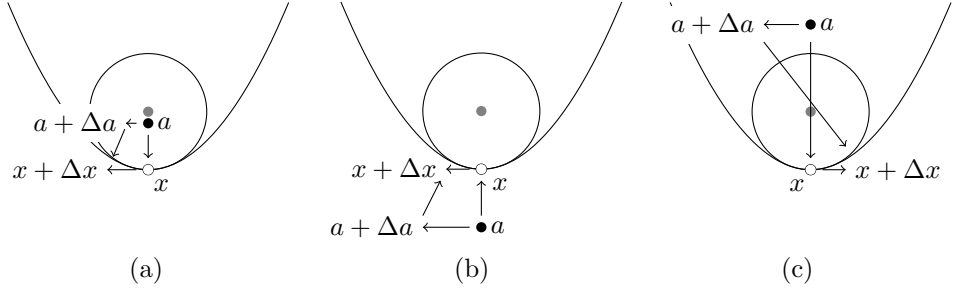

We also have an interpretation of $\kappa[\Var{S}_{\mathrm{CPP}}](a,x)$ as a normalized inverse distance to ill-posedness: for $(a,x)\in \Var{S}_\mathrm{CPP}$, let $\eta=a-x$ be the normal vector that connects~$x$ to $a$.
The locus of ill-posed inputs for the CPP is $\Sigma_\mathrm{CPP} = \Var{S}_\mathrm{CPP}\setminus\Var{W}_\mathrm{CPP}$. The projection $\Pi_{\R^n}(\Sigma_\mathrm{CPP})$ is well studied in the literature. Thom \cite{Thom1962} calls it the \emph{target envelope} of $\Var{S}_\mathrm{CPP}$, while Porteous \cite{PORTEOUS71} calls it the \emph{focal set} of $\Var{I}$, and for curves in the plane it is called the \emph{evolute}; see Chapter 10 of \cite{ONeill1983}. 
The points in the intersection of $\{x\} \times (x+\R \eta)$ with $\Sigma_\mathrm{CPP}$ are given by the multiples of the normal vector $\eta$ whose length equals one of the critical radii $r_1,\ldots,r_m$; this is also visible in \cref{fig_pert_normal_direction}. Comparing with~\cref{formula_for_kappa_CPP} we find
\begin{equation*}
  \frac{1}{\kappa[\Var{S}_{\mathrm{CPP}}](a,x)} =  \min_{\eta_0\in \R \eta: \, (x,a+\eta_0) \in \Sigma_\mathrm{CPP}} \frac{\|\eta_0\|}{\vert \,\Vert \eta_0\Vert - \Vert \eta\Vert \, \vert},
\text{ where } \eta = a-x.\end{equation*}
This is an interpretation of Rice's condition number in the spirit of Demmel \cite{Demmel1987,Demmel1987a} \changed{and Renegar \cite{Renegar1995a,Renegar1995b}.}

\begin{remark}
If $\Var{I}$ is a smooth algebraic variety in $\R^n$, then solving $\Var S_\mathrm{CPP}$ reduces to solving a system of polynomial equations. For almost all inputs $a\in \R^n$ this system has a constant number of solutions over the complex numbers, called the \emph{Euclidean distance degree} \cite{DHOST206}. \Cref{main2} thus characterizes the condition number of real solutions of this special system of polynomial equations.
\end{remark}

\section{The condition number of approximation problems} \label{sec_results_1}
\changed{Next, we analyse the condition number of global minimizers of \cref{eqn_IPIP}. This setting will be crucial for a finer interpretation of \cref{main2,main3} in \cref{sec_optimization}.}

\changed{By suitably choosing the height function $\delta$ \cite[Chapter 4, Section 5]{Hirsch1976}, we can assume for each ambient input $a$ in a tubular neighborhood $\Var{T}\subset\R^n$ of $\Var{I}$ that} there is a unique point $x$ on $\Var{I}$ that minimizes the distance from $\Var{I}$ to~$a$. 
\changed{In this case, the computational problem that consists of finding global minimizers of \cref{eqn_IPIP} can be modeled implicitly with the graph}
\begin{align} \label{eqn_ap} \tag{AP}
 \Var{S}_\mathrm{AP} := \left\{(a,y) \in \Var T \times \Var{O} \mid (\mathrm{P}_{\Var{I}}(a),y)\in\Var{S} \right\},
 \end{align}
 where $\mathrm{P}_{\Var{I}} : \Var T \to \Var{I}, a\mapsto \mathrm{argmin}_{x\in\Var{I}} \Vert x-a\Vert$ is the nonlinear projection that maps $a \in \Var{T}$ to the unique point on $\Var{I}$ that minimizes the distance to $a$. \changed{We refer to the problem $\Var{S}_\mathrm{AP}$ as the \textit{approximation problem} (AP) to distinguish it from the problem of finding critical points of \cref{eqn_IPIP}, which is treated in \cref{sec_results_3}.}

\changed{If $\Var{S}_\mathrm{AP}$ were a smooth submanifold of $\Var T \times \Var{O}$, its condition number would be given by \cref{def_kappa}. However, this is not the case in general. We can nevertheless consider the subset of}
well-posed tuples for \cref{eqn_ap}:
\begin{align*}
 \Var{W}_\mathrm{AP} := \left\{(a,y) \in \Var T \times \Var{O} \mid (\mathrm{P}_{\Var{I}}(a),y)\in\Var{W} \right\}.
\end{align*}
In \cref{sec_proofs_thm2}, we prove the following result under \cref{ass_1}.

\begin{lemma} \label{V_AP_is_a_manifold}
$\Var{W}_\mathrm{AP}$ is a smooth embedded submanifold of $\Var{T} \times \Var{O}$ of dimension~$n$. Moreover, $\Var{W}_\mathrm{AP}$ is dense in $\Var{S}_\mathrm{AP}$.
\end{lemma}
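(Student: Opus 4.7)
The plan is to realize $\Var{W}_\mathrm{AP}$ as the preimage of $\Var{W}$ under a suitable submersion, and to derive density by lifting convergent sequences through the tubular neighborhood structure.

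\textbf{Submanifold structure.} First I would verify that the nearest-point projection $\mathrm{P}_\Var{I} : \Var{T} \to \Var{I}$ is a smooth submersion. This is essentially immediate from the construction of the tubular neighborhood in \cref{sec_sub_bundles}: under the diffeomorphism $E|_{\Var{V}_\delta}^{-1} : \Var{T} \to \Var{V}_\delta \subset \mathrm{N}\Var{I}$, the projection $\mathrm{P}_\Var{I}$ becomes the smooth bundle projection $(x,\eta) \mapsto x$, which is a submersion. Consequently, the map
\[
 \Phi : \Var{T} \times \Var{O} \to \Var{I} \times \Var{O}, \quad (a,y) \mapsto (\mathrm{P}_\Var{I}(a), y),
\]
is a smooth submersion, being the product of $\mathrm{P}_\Var{I}$ with the identity on $\Var{O}$. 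Since $\Var{W}$ is an embedded submanifold of $\Var{I} \times \Var{O}$ of dimension $m$ by \cref{ass_1}, its codimension is $\dim \Var{O}$. The preimage of an embedded submanifold under a submersion is an embedded submanifold of the same codimension (a standard consequence of the local normal form of submersions, cf.\ \cite[Proposition 5.47]{Lee2013}), so $\Var{W}_\mathrm{AP} = \Phi^{-1}(\Var{W})$ is smoothly embedded in $\Var{T} \times \Var{O}$ with
\[
 \dim \Var{W}_\mathrm{AP} = \dim(\Var{T} \times \Var{O}) - \dim \Var{O} = n.
\]

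\textbf{Density.} Fix $(a,y) \in \Var{S}_\mathrm{AP}$ and set $x := \mathrm{P}_\Var{I}(a) \in \Var{I}$, so $(x,y) \in \Var{S}$. By \cref{ass_1}, $\Var{W}$ is dense in $\Var{S}$, so there is a sequence $(x_k, y_k) \in \Var{W}$ with $(x_k, y_k) \to (x, y)$. What remains is to lift $x_k$ to an $a_k \in \Var{T}$ with $\mathrm{P}_\Var{I}(a_k) = x_k$ and $a_k \to a$; then $(a_k, y_k) \in \Var{W}_\mathrm{AP}$ and $(a_k, y_k) \to (a, y)$, completing the argument.

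To perform the lift, write $\eta := a - x \in \NSp{x}{\Var{I}}$ with $\|\eta\| < \delta(x)$, and set $\eta_k := \mathrm{P}_{\NSp{x_k}{\Var{I}}}(\eta)$. Since the normal bundle $\mathrm{N}\Var{I}$ is a smooth subbundle of $\Var{I} \times \R^n$, the orthogonal projection onto the normal space varies continuously with the base point, so $\eta_k \to \eta$, and in particular $\|\eta_k\| \to \|\eta\|$. By continuity of $\delta$, $\delta(x_k) \to \delta(x) > \|\eta\|$, hence $\|\eta_k\| < \delta(x_k)$ for all large $k$, meaning $(x_k, \eta_k) \in \Var{V}_\delta$. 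Setting $a_k := x_k + \eta_k = E(x_k, \eta_k) \in \Var{T}$, we have $\mathrm{P}_\Var{I}(a_k) = x_k$ by construction of the tubular neighborhood, and $a_k \to x + \eta = a$ by continuity of $E$.

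\textbf{Expected obstacle.} The submanifold part is routine once $\mathrm{P}_\Var{I}$ is identified as a submersion. The only slightly delicate point is the density argument, which hinges on being able to lift a sequence on $\Var{I}$ back to the ambient $\Var{T}$ while preserving the fiber of $\mathrm{P}_\Var{I}$; this is handled cleanly by projecting the normal vector $\eta$ onto the varying normal spaces, and relying on continuity of both the bundle projection and the height function $\delta$.
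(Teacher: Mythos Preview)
Your proof is correct and, for the submanifold part, takes a cleaner route than the paper. The paper realizes $\Var{W}_\mathrm{AP}$ as the intersection $(\Var{V}\times\Var{O})\cap(\R^n\times\Var{W})$, where $\Var{V}$ is the graph of $\mathrm{P}_\Var{I}$, and then invokes the transversality argument from the proof of \cref{V_GCPP_is_a_manifold} verbatim. You instead observe directly that $\Var{W}_\mathrm{AP}=\Phi^{-1}(\Var{W})$ with $\Phi=(\mathrm{P}_\Var{I},\mathbf{1}_\Var{O})$ a submersion, and apply the standard preimage theorem; this avoids the detour through graphs and transversal intersections and gets the dimension count in one line. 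Both approaches rely on the same underlying fact (that $\mathrm{P}_\Var{I}$ has surjective derivative on $\Var{T}$), so neither buys more generality, but yours is more self-contained.

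For density the two arguments are essentially identical: pick $(x_k,y_k)\to(x,y)$ in $\Var{W}$ and lift $x_k$ to $a_k=x_k+\eta_k\in\Var{T}$ with $\eta_k\to\eta$. The paper merely asserts such $\eta_k$ exist ``as $\Var{T}$ is an open submanifold of $\R^n$''; your explicit choice $\eta_k=\mathrm{P}_{\NSp{x_k}{\Var{I}}}(\eta)$ together with the continuity of the normal-space projection and of the height function $\delta$ makes the step genuinely rigorous.
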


The condition number of \cref{eqn_ap} is now given by \cref{def_kappa} \changed{on $\Var{W}_\mathrm{AP}$. For the remaining points, i.e., $\Var{S}_\mathrm{AP}\setminus\Var{W}_\mathrm{AP}$, the usual definition \cref{computational_problem2} still applies. From this, we obtained the following characterization, which is proved in \cref{sec_proofs_thm2}.}

\begin{theorem}[Global minimizers] \label{main11}
Let \changed{the ambient input} $a \in \Var{T} \subset \R^n$ be sufficiently close to~$\Var{I}$, specifically lying in a tubular neighborhood of $\Var{I}$, so that it has a unique closest point $x \in \Var{I}$. Let $\eta=a-x$. Then,
\[
 \kappa[\Var{S}_\mathrm{AP}](a,y) = \| (\deriv{\pi_{\Var{O}}}{(x,y)})(\deriv{\pi_{\Var{I}}}{(x,y)})^{-1} H_{\eta}^{-1} \|_{\Var{I} \to \Var{O}},
\]
where \changed{$\pi_{\Var{I}}$ and $\pi_{\Var{O}}$ are the projections onto the input and output manifolds respectively, and} $H_{\eta}$ is as in \cref{eqn_Hess_distance}. Moreover, if $(a,y)\in\Var{S}_\mathrm{AP}\setminus\Var{W}_\mathrm{AP}$ then $\deriv{\pi_\Var{I}}{(x,y)}$ is not invertible.
\end{theorem}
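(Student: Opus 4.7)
\textbf{Proof proposal for \cref{main11}.} The plan is to reduce the condition number computation to a derivative calculation, by exhibiting the local map whose graph is $\Var{S}_\mathrm{AP}$ near a well-posed tuple, and then differentiating the composition.

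First, I would observe that when $(a,y)\in\Var{W}_\mathrm{AP}$, the smooth local structure of $\Var{S}_\mathrm{AP}$ follows from \cref{V_AP_is_a_manifold}, and the associated single-valued localization is
\[
\phi : a' \longmapsto (\pi_\Var{O}\circ\pi_\Var{I}^{-1})\bigl(\mathrm{P}_\Var{I}(a')\bigr),
\]
where $\pi_\Var{I}^{-1}$ exists locally around $x=\mathrm{P}_\Var{I}(a)$ by the inverse function theorem, since $\deriv{\pi_\Var{I}}{(x,y)}$ is invertible on $\Var{W}$. Because $\Var{T}$ is a tubular neighborhood, $\mathrm{P}_\Var{I}$ is smooth on $\Var{T}$, and hence $\phi$ is smooth and $\Var{W}_\mathrm{AP}$ is locally the graph of $\phi$. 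By \cref{def_kappa} combined with \cite[Theorem~4]{Rice1966}, it follows that $\kappa[\Var{S}_\mathrm{AP}](a,y) = \| \deriv{\phi}{a} \|_{\R^n\to\Var{O}}$.

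The heart of the argument is to compute $\deriv{\mathrm{P}_\Var{I}}{a}$. Given $\dot{a}\in\R^n$, let $\gamma(t) := \mathrm{P}_\Var{I}(a+t\dot{a}) \subset \Var{I}$ with $\gamma(0)=x$ and set $\dot{x} = \gamma'(0) = (\deriv{\mathrm{P}_\Var{I}}{a})(\dot{a})$. Then $N(t) := a + t\dot{a} - \gamma(t)$ is a normal vector field along $\gamma$ with $N(0) = \eta$ and $N'(0) = \dot{a}-\dot{x}$. For any smooth tangent vector field $T(t)$ along $\gamma$ with $T(0)=w\in\Tang{x}{\Var{I}}$, the identity $\langle T(t),N(t)\rangle = 0$ differentiated at $t=0$ yields $\langle T'(0),\eta\rangle + \langle w,\dot{a}-\dot{x}\rangle = 0$. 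By the definition of the shape operator in terms of the ambient derivative, $\langle T'(0),\eta\rangle = \langle S_\eta w,\dot{x}\rangle$, and since $S_\eta$ is self-adjoint this gives $\langle w,(\mathbf{1}-S_\eta)\dot{x}\rangle = \langle w,\dot{a}\rangle$ for every $w\in\Tang{x}{\Var{I}}$. Hence $H_\eta\,\dot{x} = \mathrm{P}_{\Tang{x}{\Var{I}}}(\dot{a})$, and since $H_\eta$ is invertible at a well-posed tuple by \cref{main2},
\[
\deriv{\mathrm{P}_\Var{I}}{a} = H_\eta^{-1}\circ\mathrm{P}_{\Tang{x}{\Var{I}}}.
\]

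Combining this with the chain rule gives $\deriv{\phi}{a} = (\deriv{\pi_\Var{O}}{(x,y)})(\deriv{\pi_\Var{I}}{(x,y)})^{-1} H_\eta^{-1}\,\mathrm{P}_{\Tang{x}{\Var{I}}}$. Since this map vanishes on $\NSp{x}{\Var{I}}$ and the orthogonal projection $\mathrm{P}_{\Tang{x}{\Var{I}}}$ has unit spectral norm on the complementary summand $\Tang{x}{\Var{I}}$, the spectral norm from $\R^n$ collapses to the spectral norm of the operator $(\deriv{\pi_\Var{O}}{(x,y)})(\deriv{\pi_\Var{I}}{(x,y)})^{-1} H_\eta^{-1}$ viewed as a map $\Tang{x}{\Var{I}}\to\Tang{y}{\Var{O}}$, which is the stated formula. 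For the last assertion, if $(a,y)\in\Var{S}_\mathrm{AP}\setminus\Var{W}_\mathrm{AP}$ then by definition $(x,y)\notin\Var{W}$, which is precisely the statement that $\deriv{\pi_\Var{I}}{(x,y)}$ is not invertible. The main technical obstacle is the Weingarten identification in the derivative of $\mathrm{P}_\Var{I}$; care is required in differentiating the varying orthogonality condition, but this is clean once one treats $N(t)$ as a normal vector field and invokes the ambient derivative characterization of $S_\eta$.
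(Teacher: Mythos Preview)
Your proposal is correct and follows essentially the same route as the paper: identify the single-valued localization as $\pi_\Var{O}\circ\pi_\Var{I}^{-1}\circ\mathrm{P}_\Var{I}$, compute $\deriv{\mathrm{P}_\Var{I}}{a}=H_\eta^{-1}\mathrm{P}_{\Tang{x}{\Var{I}}}$, and apply the chain rule. The only difference is that the paper invokes this derivative formula as a known result (their \cref{thm_ap_derivative}, attributed to Weyl and Abatzoglou), whereas you re-derive it inline by differentiating the orthogonality relation $\langle T(t),N(t)\rangle=0$; one small wording issue is that the invertibility of $H_\eta$ really comes from $a$ lying in the tubular neighborhood (equivalently $(a,x)\in\Var{W}_\mathrm{CPP}$), not directly from $(a,y)\in\Var{W}_\mathrm{AP}$, so your appeal to \cref{main2} should be read via \cref{lem_character_1-S}.
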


\begin{corollary}
$\kappa[\Var{S}_\mathrm{AP}] : \Var{T} \to \R \cup \{+\infty\}$ is continuous \changed{and $\kappa[\Var{S}_\mathrm{AP}](a,y)$ is finite if and only if $(a,y)\in\Var{W}_\mathrm{AP}$}.
\end{corollary}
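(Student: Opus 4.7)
The plan is to derive both claims from the explicit formula in \cref{main11} together with standard continuity arguments. For finiteness, observe that on $\Var{W}_\mathrm{AP}$ the theorem gives
\[
\kappa[\Var{S}_\mathrm{AP}](a,y) = \bigl\|(\deriv{\pi_\Var{O}}{(x,y)})(\deriv{\pi_\Var{I}}{(x,y)})^{-1} H_\eta^{-1}\bigr\|_{\Var{I}\to\Var{O}},
\]
where $\deriv{\pi_\Var{I}}{(x,y)}$ is invertible by the very definition of $\Var{W}$ and $\Var{W}_\mathrm{AP}$. Moreover, $H_\eta=\mathbf{1}-S_\eta$ is invertible throughout $\Var{T}$: in a tangent-normal split basis adapted to the tubular neighborhood, the differential $\deriv{E}{(x,\eta_x)}$ takes the block form $\mathrm{diag}(H_{\eta_x},\mathbf{1})$, and since $E|_{\Var{V}_\delta}$ is a diffeomorphism this block must be non-singular. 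Consequently $\kappa$ is finite on $\Var{W}_\mathrm{AP}$. Conversely, on $\Var{S}_\mathrm{AP}\setminus\Var{W}_\mathrm{AP}$, \cref{main11} asserts that $\deriv{\pi_\Var{I}}{(x,y)}$ is not invertible, so by the spectral-norm convention the displayed expression evaluates to $+\infty$.

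Continuity on $\Var{W}_\mathrm{AP}$ follows from the same formula by composing elementary continuity statements. The foot-point map $\mathrm{P}_\Var{I}:\Var{T}\to\Var{I}$ is smooth (as the composition of the base-point projection with the smooth inverse of $E|_{\Var{V}_\delta}$), so $\eta=a-\mathrm{P}_\Var{I}(a)$ depends smoothly on $a$; the second fundamental form is smooth in its base point and normal argument, so $H_\eta$ and $H_\eta^{-1}$ are smooth where defined; the differentials $\deriv{\pi_\Var{I}}{(x,y)}$ and $\deriv{\pi_\Var{O}}{(x,y)}$ are smooth in $(x,y)$; operator inversion is continuous on the open set of invertibles; and the spectral norm is continuous. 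Composing these yields continuity of $\kappa[\Var{S}_\mathrm{AP}]$ on $\Var{W}_\mathrm{AP}$.

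The main obstacle is continuity at an ill-posed tuple $(a_0,y_0)\in\Var{S}_\mathrm{AP}\setminus\Var{W}_\mathrm{AP}$, where $\kappa=+\infty$. Let $(a_n,y_n)\to(a_0,y_0)$ be any sequence in $\Var{S}_\mathrm{AP}$, write $x_n:=\mathrm{P}_\Var{I}(a_n)$ and $\eta_n:=a_n-x_n$, and assume without loss of generality that $(a_n,y_n)\in\Var{W}_\mathrm{AP}$ for all $n$ (the ill-posed case is immediate). Since $\dim\Tang{(x_0,y_0)}{\Var{S}}=m=\dim\Tang{x_0}{\Var{I}}$, the non-invertibility of $\deriv{\pi_\Var{I}}{(x_0,y_0)}$ provides a nonzero $v_0\in\Tang{(x_0,y_0)}{\Var{S}}$ with $(\deriv{\pi_\Var{I}}{(x_0,y_0)})(v_0)=0$; because $\Var{S}$ is embedded in $\Var{I}\times\Var{O}$, the second component $(\deriv{\pi_\Var{O}}{(x_0,y_0)})(v_0)$ must then be nonzero. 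Using a local smooth frame of $\mathrm{T}\Var{S}$ near $(x_0,y_0)$, lift $v_0$ to $v_n\in\Tang{(x_n,y_n)}{\Var{S}}$ with $v_n\to v_0$. Setting $u_n:=H_{\eta_n}\bigl((\deriv{\pi_\Var{I}}{(x_n,y_n)})(v_n)\bigr)$, a brief algebraic check gives
\[
(\deriv{\pi_\Var{O}}{(x_n,y_n)})(\deriv{\pi_\Var{I}}{(x_n,y_n)})^{-1} H_{\eta_n}^{-1}(u_n) = (\deriv{\pi_\Var{O}}{(x_n,y_n)})(v_n) \longrightarrow (\deriv{\pi_\Var{O}}{(x_0,y_0)})(v_0) \ne 0,
\]
while $\|u_n\|\le\|H_{\eta_n}\|\cdot\|(\deriv{\pi_\Var{I}}{(x_n,y_n)})(v_n)\|\to\|H_{\eta_0}\|\cdot 0=0$. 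The corresponding Rayleigh quotient therefore tends to $+\infty$, forcing $\kappa[\Var{S}_\mathrm{AP}](a_n,y_n)\to+\infty$ and establishing the desired continuity.
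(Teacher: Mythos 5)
Your proof is correct, and it follows the route the paper intends: the corollary is stated without proof as an immediate consequence of \cref{main11}, and you derive both claims from that theorem's formula. You actually supply two details the paper leaves implicit, and both are handled soundly: (i) the invertibility of $H_\eta$ everywhere on $\Var{T}$, which you deduce from the fact that $E|_{\Var{V}_\delta}$ is a diffeomorphism (this is also what underlies the paper's \cref{thm_ap_derivative}); and (ii) the divergence $\kappa[\Var{S}_\mathrm{AP}](a_n,y_n)\to+\infty$ along well-posed sequences approaching an ill-posed tuple, via a kernel vector $v_0$ of $\deriv{\pi_\Var{I}}{(x_0,y_0)}$ whose $\pi_\Var{O}$-component is nonzero because $\Var{S}\subset\Var{I}\times\Var{O}$ is embedded. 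One cosmetic imprecision: with respect to the vertical/horizontal splitting of $\Tang{(x,\eta)}{\mathrm{N}\Var{I}}$ and the tangent/normal splitting of $\R^n$, the differential $\deriv{E}{(x,\eta)}$ is block \emph{lower triangular} with diagonal blocks $H_\eta$ and $\mathbf{1}$, not block diagonal (the normal component of the image of a horizontal vector need not vanish); the conclusion that invertibility of $\deriv{E}{(x,\eta)}$ is equivalent to invertibility of $H_\eta$ is unaffected.
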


\changed{\begin{remark} \label{rem_explicit_derivative}
If $f : \Var{I} \to \Var{O}$ is a smooth map and $\Var{S}$ is the graph of~$f$, then the foregoing specializes to
\[
  (\deriv{\pi_{\Var{O}}}{(x,y)})(\deriv{\pi_{\Var{I}}}{(x,y)})^{-1} = \deriv{f}{x}, \quad\text{so}\quad
 \kappa[\Var{S}_\mathrm{AP}](x,y) = \| (\deriv{f}{x}) H_\eta^{-1} \|_{\Var{I}\to\Var{O}}
\]
Similarly, if $F : \Var{O} \to \Var{I}$ is a smooth map with graph $\Var{S}$, then 
\[
 (\deriv{\pi_{\Var{O}}}{(x,y)})(\deriv{\pi_{\Var{I}}}{(x,y)})^{-1} = (\deriv{F}{y})^{-1}, \quad\text{so}\quad
 \kappa[\Var{S}_\mathrm{AP}](x,y) = \| (\deriv{F}{y})^{-1} H_\eta^{-1} \|_{\Var{I}\to\Var{O}}
\]
by the inverse function theorem.
These observations apply to \cref{main3} as well.
\end{remark}}

\changed{The contribution $(\deriv{\pi_\Var{O}}{(x,y)})(\deriv{\pi_\Var{I}}{(x,y)})^{-1}$ to the condition number originates from the solution manifold $\Var{S}$; it is intrinsic and does not depend on the embedding. On the other hand, the curvature factor $H_\eta^{-1}$ is extrinsic in the sense that it depends on the embedding of $\Var{I}$ into $\R^n$, as in \cref{main2}.}

The curvature factor in \cref{main11} disappears when $S_\eta = 0$. This occurs when \changed{the ambient input $a$ lies} on the input manifold, so $\eta=0$. In this case, $H_0^{-1}=\mathbf{1}$, so we have the following result.
\begin{corollary}\label{main1}
Let $(x,y)\in\Var S$. \changed{Then, we have $\kappa[\Var{S}](x,y)=\kappa[\Var{S}_\mathrm{AP}](x,y)$.}
\end{corollary}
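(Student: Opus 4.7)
The plan is to apply \cref{main11} with ambient input $a := x$ and to verify that the curvature contribution vanishes. Since $\Var{I}$ lies inside its tubular neighborhood $\Var{T}$, the point $x$ itself is in $\Var{T}$, and it is trivially its own unique closest point on $\Var{I}$: $\mathrm{P}_{\Var{I}}(x) = x$. From the definition of $\Var{S}_\mathrm{AP}$ in \cref{eqn_ap} it follows that $(x,y) \in \Var{S}_\mathrm{AP}$ if and only if $(x,y) \in \Var{S}$, so both condition numbers are defined on the same pairs.

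For this ambient input the normal vector attached to $x$ is $\eta = a - \mathrm{P}_{\Var{I}}(a) = 0$. The second fundamental form $\SFF_x : \NSp{x}{\Var{I}} \to S^2(\Tang{x}{\Var{I}})$ is linear in its normal argument by construction, so $S_0 = \SFF_x(0) = 0$; consequently the Hessian factor from \cref{eqn_Hess_distance} collapses to $H_0 = \mathbf{1} - S_0 = \mathbf{1}$, and hence $H_0^{-1} = \mathbf{1}$.

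Substituting this into the formula of \cref{main11} gives
\[
 \kappa[\Var{S}_\mathrm{AP}](x,y) = \|(\deriv{\pi_\Var{O}}{(x,y)})(\deriv{\pi_\Var{I}}{(x,y)})^{-1}\|_{\Var{I}\to\Var{O}},
\]
which is precisely the expression for $\kappa[\Var{S}](x,y)$ in \cref{def_kappa}. For ill-posed tuples $(x,y) \in \Var{S} \setminus \Var{W}$, both sides equal $+\infty$ by the footnote convention, since the derivative $\deriv{\pi_\Var{I}}{(x,y)}$ fails to be invertible in both \cref{def_kappa} and \cref{main11}. There is no real obstacle in this argument; the corollary is essentially a bookkeeping consequence of \cref{main11}, with the only substantive input being the linearity (hence vanishing at zero) of the second fundamental form in its normal argument.
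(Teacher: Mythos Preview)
Your proof is correct and follows exactly the same route as the paper: set $a=x$ so that $\eta=0$, observe $H_0=\mathbf{1}$, and read off the equality from \cref{main11}. The paper's argument is literally the one sentence preceding the corollary, and you have simply spelled out the details (including the ill-posed case) more explicitly.
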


In other words, if the \changed{ambient input, given by its coordinates in $\R^n$,} lies exactly on $\Var{I}$,
the condition number of \cref{eqn_ap} equals the condition number of the computational problem $\Var{S}$, \changed{even though there can be many more directions of perturbation in $\R^n$ than in $\Var{I}$}!
By continuity, the effect of curvature on the sensitivity of the computational problem can essentially be ignored for small $\Vert \eta\Vert$, i.e., for points very close to~$\Var{I}$. This is convenient in practice because computing the Weingarten map is often more complicated than obtaining the derivatives of the projection maps $\pi_\Var{O}$ and $\pi_\Var{I}$.

\section{The condition number of general critical point problems} \label{sec_results_3}
We now consider the sensitivity of critical points of \cref{eqn_IPIP}. 
As mentioned before, our motivation stems from solving \cref{eqn_IPIP} when a closed-form solution is lacking. Applying Riemannian optimization methods to them, one can in general only guarantee to find points satisfying the first-order optimality conditions \cite{AMS2008}. \changed{Consequently, in theory, they solve} a \emph{generalized critical point problem}: Given an ambient input $a \in \R^n$, produce an output $y \in \Var{O}$ such that there is a corresponding $x \in \Var{I}$ that satisfies the first-order optimality conditions of \cref{eqn_IPIP}.
We can implicitly formulate this computational problem as follows:
\begin{align} \label{eqn_gcpp}\tag{GCPP}
\Var{S}_\mathrm{GCPP} := \left\{(a,x,y) \in \R^n \times\Var{I}\times \Var{O} \mid (a,x)\in\Var{S}_\mathrm{CPP}  \text{ and }  (x,y)\in\Var{S} \right\}.
\end{align}
The graph $\Var{S}_\mathrm{GCPP}$ is defined using three factors, where the first is the input and the third is the output. The second factor $\Var{I}$ encodes how the output is obtained from the input. Triples are ill-posed if either $(x,y)\in\Var{S}$ is ill-posed for the original problem or $(a,x)\in\Var{S}_\mathrm{CPP}$ is ill-posed for the \cref{eqn_cpp}. Therefore, the well-posed locus is
\begin{align*}
\Var{W}_\mathrm{GCPP} := \left\{(a,x,y) \in \R^n \times\Var{I}\times \Var{O} \mid (a,x)\in\Var{W}_\mathrm{CPP}  \text{ and }  (x,y)\in\Var{W} \right\}.
\end{align*}
We prove in \cref{sec_proofs_thm4} that the well-posed tuples form a manifold.

\begin{lemma} \label{V_GCPP_is_a_manifold}
$\Var{W}_\mathrm{GCPP}$ is an embedded submanifold of $\R^n \times \Var{W}$ of dimension $n$.
Moreover, $\Var{W}_\mathrm{GCPP}$ is dense in $\Var{S}_\mathrm{GCPP}$.
\end{lemma}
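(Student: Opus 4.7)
The plan has two parts. First, I would establish the manifold structure by realizing $\Var{W}_\mathrm{GCPP}$ as the pullback of $\Var{W}_\mathrm{CPP}$ under a local diffeomorphism; second, I would prove density in $\Var{S}_\mathrm{GCPP}$ by lifting suitable perturbations from $\Var{I}$ back to $\Var{W}$ through local sections of $\pi_\Var{I}$.

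For the manifold structure, I would consider the smooth map
\[
 F : \R^n \times \Var{W} \to \R^n \times \Var{I},\quad (a,(x,y)) \mapsto (a,x).
\]
Since $(x,y)\in\Var{W}$ means by definition that $\deriv{\pi_\Var{I}}{(x,y)}$ is invertible, and since $\dim\Var{W} = \dim\Var{I} = m$, the restriction $\pi_\Var{I}|_\Var{W}$ is a local diffeomorphism, and hence so is $F$. By \cref{C_is_a_manifold,lem_WCPP_is_manifold}, $\Var{W}_\mathrm{CPP}$ is an $n$-dimensional embedded submanifold of $\R^n\times\Var{I}$. Under the canonical identification $\R^n\times\Var{W}\hookrightarrow\R^n\times\Var{I}\times\Var{O}$, a direct unfolding of the definitions gives $F^{-1}(\Var{W}_\mathrm{CPP}) = \Var{W}_\mathrm{GCPP}$. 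Since the preimage of an embedded submanifold under a local diffeomorphism is an embedded submanifold of the same dimension, this establishes the first claim.

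For density, let $(a,x,y)\in\Var{S}_\mathrm{GCPP}$. Using density of $\Var{W}$ in $\Var{S}$ from \cref{ass_1}, pick $(\tilde x_k,\tilde y_k)\in\Var{W}$ with $(\tilde x_k,\tilde y_k)\to (x,y)$. Define $\tilde a_k := \tilde x_k + \mathrm{P}_{\NSp{\tilde x_k}{\Var{I}}}(a-\tilde x_k)$, so that $(\tilde a_k,\tilde x_k)\in\Var{S}_\mathrm{CPP}$; since the normal projection depends smoothly on the base point and $a-x\in\NSp{x}{\Var{I}}$ already, we have $\tilde a_k\to a$. Pick an open neighborhood $U_k$ of $(\tilde x_k,\tilde y_k)$ in $\Var{W}$ on which $\pi_\Var{I}$ restricts to a diffeomorphism onto an open set $V_k\ni\tilde x_k$. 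Using density of $\Var{W}_\mathrm{CPP}$ in $\Var{S}_\mathrm{CPP}$ from \cref{lem_WCPP_is_manifold}, one can pick $(a_k,x_k)\in\Var{W}_\mathrm{CPP}$ with $x_k\in V_k$ and $\|(a_k,x_k)-(\tilde a_k,\tilde x_k)\|<1/k$, taking the perturbation small enough so that $y_k := (\pi_\Var{I}|_{U_k})^{-1}(x_k)\in\Var{W}$ satisfies $\|y_k - \tilde y_k\|<1/k$ by continuity of the local inverse. Then $(a_k,x_k,y_k)\in\Var{W}_\mathrm{GCPP}$ converges to $(a,x,y)$.

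The main obstacle is combining the two density statements along a \emph{single} sequence: density of $\Var{W}$ in $\Var{S}$ and density of $\Var{W}_\mathrm{CPP}$ in $\Var{S}_\mathrm{CPP}$ do not a priori yield approximating sequences that satisfy both conditions simultaneously. The key resolution is the observation that $\pi_\Var{I}|_\Var{W}$ is a local diffeomorphism by the well-posedness assumption, so any sufficiently small perturbation of $x$ in $\Var{I}$ lifts uniquely to a nearby point of $\Var{W}$; this converts a perturbation produced by density of $\Var{W}_\mathrm{CPP}$ in the critical locus into a perturbation of $(x,y)$ inside $\Var{W}$, giving compatible perturbations in both factors at once.
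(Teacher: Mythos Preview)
Your proof is correct and in fact cleaner than the paper's in places, but it takes a somewhat different route for the manifold structure.

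For the first part, the paper realizes $\Var{W}_\mathrm{GCPP}$ as the intersection $(\Var{W}_\mathrm{CPP}\times\Var{O})\cap(\R^n\times\Var{W})$ inside $\R^n\times\Var{I}\times\Var{O}$ and verifies transversality directly by a tangent-space computation. Your approach---pulling $\Var{W}_\mathrm{CPP}$ back along the local diffeomorphism $F=\mathbf{1}\times\pi_\Var{I}|_\Var{W}$---is more economical: once you observe that $\pi_\Var{I}|_\Var{W}$ is a local diffeomorphism (which is exactly the content of the well-posedness condition plus $\dim\Var{W}=\dim\Var{I}$), transversality is automatic and no explicit tangent computation is needed. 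The paper's approach, on the other hand, makes the tangent space of $\Var{W}_\mathrm{GCPP}$ more visible, which is useful for the subsequent condition-number formula.

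For density, the two arguments are close in spirit: both first approximate $(x,y)$ by points in $\Var{W}$ and then adjust the normal vector. The paper simply asserts that after choosing $\eta_i\to\eta$ in the normal bundle ``only finitely many'' of the resulting $(x_i+\eta_i,x_i)$ fail to lie in $\Var{W}_\mathrm{CPP}$, which as stated leans on more than open-density alone. Your version avoids this by explicitly invoking density of $\Var{W}_\mathrm{CPP}$ in $\Var{S}_\mathrm{CPP}$ to perturb $(\tilde a_k,\tilde x_k)$ and then lifting the perturbed $x_k$ back to $\Var{W}$ through the local inverse of $\pi_\Var{I}$. This is the right way to synchronize the two density statements, and it is the ``key resolution'' you identified. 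One cosmetic point: $\Var{O}$ is an abstract Riemannian manifold, so $\|y_k-\tilde y_k\|$ should be $\dist_\Var{O}(y_k,\tilde y_k)$.
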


Consequently, on $\Var{W}_{\mathrm{GCPP}}$ we can use the definition of condition from \cref{def_kappa}. 
Our final main result generalizes \Cref{main11}; it is proved in \cref{sec_proofs_thm4}.

\begin{theorem}[Generalized critical points]\label{main3}
Let $(a,x,y)\in \Var{S}_\mathrm{GCPP}$ and $\eta=a-x$. Then, we have
\[
\kappa[\Var{S}_{\mathrm{GCPP}}](a,x,y) = \Vert (\deriv{\pi_{\Var{O}}}{(x,y)} ) \; (\deriv{\pi_{\Var{I}}}{(x,y)})^{-1} H_{\eta}^{-1}  \Vert_{\Var{I} \to \Var{O}},
\]
where $\pi_{\Var{I}}:\Var{S}\to \Var{I}$ and $\pi_{\Var{O}}:\Var{S}\to \Var{O}$ are coordinate projections, and
$H_\eta$ is given by \cref{eqn_Hess_distance}. Moreover, if $(a,x,y)\in\Var{S}_\mathrm{GCPP}\setminus\Var{W}_\mathrm{GCPP}$, then either $\deriv{\pi_{\Var{I}}}{(x,y)}$ or $H_\eta$ is not invertible.
\end{theorem}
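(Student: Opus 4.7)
The plan is to recognize $\Var{S}_\mathrm{GCPP}$ as the fibre product of $\Var{S}_\mathrm{CPP}$ and $\Var{S}$ over $\Var{I}$, and then to reduce the condition number to a chain-rule identity that combines the formula from \cref{main2} with the condition number of the original problem $\Var{S}$. Denote by $P_{\R^n}:\Var{S}_\mathrm{GCPP}\to\R^n$ and $P_\Var{O}:\Var{S}_\mathrm{GCPP}\to\Var{O}$ the coordinate projections defining the computational problem, and by $p:(a,x,y)\mapsto(a,x)$ and $q:(a,x,y)\mapsto(x,y)$ the canonical factor projections from $\Var{S}_\mathrm{GCPP}$ onto $\Var{S}_\mathrm{CPP}$ and $\Var{S}$ respectively. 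The commutative identities $P_{\R^n}=\Pi_{\R^n}\circ p$ and $P_\Var{O}=\pi_\Var{O}\circ q$ are the key factorization underlying the proof.

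At a well-posed tuple $(a,x,y)\in\Var{W}_\mathrm{GCPP}$ I would first use \cref{V_GCPP_is_a_manifold} together with a dimension count to establish the fibre-product structure at the tangent level,
\[
 \Tang{(a,x,y)}{\Var{W}_\mathrm{GCPP}}=\cset{(\dot a,\dot x,\dot y)}{(\dot a,\dot x)\in\Tang{(a,x)}{\Var{W}_\mathrm{CPP}},\ (\dot x,\dot y)\in\Tang{(x,y)}{\Var{W}}}.
\]
By well-posedness both $\deriv{\Pi_{\R^n}}{(a,x)}$ (in the CPP setting) and $\deriv{\pi_\Var{I}}{(x,y)}$ are invertible, so $\dot a\in\R^n$ uniquely determines $\dot x\in\Tang{x}{\Var{I}}$ through $\Var{S}_\mathrm{CPP}$, and this $\dot x$ in turn uniquely determines $\dot y\in\Tang{y}{\Var{O}}$ through $\Var{S}$. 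Applying the chain rule to the factorizations of $P_{\R^n}$ and $P_\Var{O}$ gives
\begin{align*}
(\deriv{P_\Var{O}}{(a,x,y)})(\deriv{P_{\R^n}}{(a,x,y)})^{-1} &= (\deriv{\pi_\Var{O}}{(x,y)})(\deriv{\pi_\Var{I}}{(x,y)})^{-1} \\
 &\quad \cdot (\deriv{\Pi_\Var{I}}{(a,x)})(\deriv{\Pi_{\R^n}}{(a,x)})^{-1},
\end{align*}
and by definition $\kappa[\Var{S}_\mathrm{GCPP}](a,x,y)$ is the spectral norm of this composition.

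The principal obstacle is that the rightmost factor above maps $\R^n$ into $\Tang{x}{\Var{I}}$, whereas $H_\eta^{-1}$ in \cref{main2} is an endomorphism of $\Tang{x}{\Var{I}}$. I would reprise the computation from the proof of \cref{main2} to identify $(\deriv{\Pi_\Var{I}}{(a,x)})(\deriv{\Pi_{\R^n}}{(a,x)})^{-1}$ with $H_\eta^{-1}\circ\mathrm{P}_{\Tang{x}{\Var{I}}}$; geometrically, an infinitesimal normal perturbation of $a$ induces no tangential motion at $x$, since $x$ still satisfies the first-order critical condition to that order. Because the preceding operators $(\deriv{\pi_\Var{O}}{(x,y)})(\deriv{\pi_\Var{I}}{(x,y)})^{-1}$ act on tangent vectors in $\Tang{x}{\Var{I}}$ only, the orthogonal projection $\mathrm{P}_{\Tang{x}{\Var{I}}}$ is absorbed without enlarging the supremum, and the $\R^n\to\Var{O}$ spectral norm of the full composition coincides with $\Vert(\deriv{\pi_\Var{O}}{(x,y)})(\deriv{\pi_\Var{I}}{(x,y)})^{-1}H_\eta^{-1}\Vert_{\Var{I}\to\Var{O}}$, which is the stated formula.

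Finally, for $(a,x,y)\in\Var{S}_\mathrm{GCPP}\setminus\Var{W}_\mathrm{GCPP}$ the definition of $\Var{W}_\mathrm{GCPP}$ forces at least one of $(a,x)\notin\Var{W}_\mathrm{CPP}$ or $(x,y)\notin\Var{W}$. \Cref{main2} converts the former into singularity of $H_\eta$, and the definition of $\Var{W}$ converts the latter into singularity of $\deriv{\pi_\Var{I}}{(x,y)}$; this establishes the second assertion of the theorem. By the convention that an ill-posed spectral norm equals $\infty$, both sides of the formula then take the value $\infty$, matching $\kappa[\Var{S}_\mathrm{GCPP}](a,x,y)=\infty$ at ill-posed tuples.
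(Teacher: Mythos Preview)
Your proof is correct and follows essentially the same approach as the paper: you introduce the factor projections $p,q$ (the paper's $(\mathbf{1}\times\Pi_\Var{I})$ and $(\Pi_\Var{I}\times\mathbf{1})$), factor $P_{\R^n}$ and $P_\Var{O}$ through them, apply the chain rule to obtain exactly the paper's identity \cref{222}, and then invoke \cref{main2} for the curvature factor and the ill-posed case. Your fibre-product language and explicit description of the tangent space $\Tang{(a,x,y)}{\Var{W}_\mathrm{GCPP}}$ give a slightly cleaner conceptual justification for why $\deriv{p}$ is invertible and why $\deriv{q}\circ(\deriv{p})^{-1}=(\deriv{\pi_\Var{I}}{(x,y)})^{-1}(\deriv{\Pi_\Var{I}}{(a,x)})$, but the argument is otherwise identical.
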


\begin{corollary}\label{cor_kappa_gcpp_cont}
$\kappa[\Var{S}_\mathrm{GCPP}] : \Var{S}_\mathrm{GCPP} \to \R \cup \{+\infty\}$ is continuous \changed{and the condition number $\kappa[\Var{S}_\mathrm{GCPP}](a,x,y)$ is finite if and only if $(a,x,y)\in\Var{W}_\mathrm{GCPP}$}.
\end{corollary}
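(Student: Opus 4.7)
The plan is to factor the GCPP through its two building blocks $\Var{S}_\mathrm{CPP}$ and $\Var{S}$ and then chain the corresponding local inverse maps. By \Cref{V_GCPP_is_a_manifold} the well-posed locus $\Var{W}_\mathrm{GCPP}$ is an open dense $n$-dimensional embedded submanifold, so the formula \cref{def_kappa} for the condition number applies there. Let $\Pi_{\R^n}:\Var{S}_\mathrm{GCPP}\to\R^n$ and $\Pi_\Var{O}:\Var{S}_\mathrm{GCPP}\to\Var{O}$ denote the extreme coordinate projections. My first observation is that a tangent vector to $\Var{W}_\mathrm{GCPP}$ at $(a,x,y)$ has the form $(\dot a,\dot x,\dot y)$ with $(\dot a,\dot x)\in\Tang{(a,x)}{\Var{W}_\mathrm{CPP}}$ and $(\dot x,\dot y)\in\Tang{(x,y)}{\Var{W}}$, so it suffices to express $\dot x$ in terms of $\dot a$ on the CPP factor and $\dot y$ in terms of $\dot x$ on the $\Var{S}$ factor.

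For the CPP factor I would use the linear diffeomorphism $\mathrm{N}\Var{I}\to\Var{S}_\mathrm{CPP}$, $(x,\eta)\mapsto(x+\eta,x)$ from \Cref{C_is_a_manifold}, parametrizing tangent vectors at $(a,x)$ as $(\dot x+\dot\eta,\dot x)$, where $\dot x\in\Tang{x}{\Var{I}}$ and $\dot\eta$ is the derivative along a curve $x(t)$ of a normal field $\eta(t)\in\NSp{x(t)}{\Var{I}}$. Splitting $\dot\eta=\dot\eta^T+\dot\eta^\perp$ into tangential and normal parts and invoking the defining relation $\dot\eta^T=-S_\eta\dot x$ of the Weingarten map, the $\Tang{x}{\Var{I}}$ component of $\dot a$ reads $(\mathbf{1}-S_\eta)\dot x=H_\eta\dot x$, while its $\NSp{x}{\Var{I}}$ component equals $\dot\eta^\perp$. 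Hence invertibility of $\deriv{\Pi_{\R^n}}{(a,x)}$ is equivalent to invertibility of $H_\eta$, consistent with \Cref{main2}, and on $\Var{W}_\mathrm{CPP}$ one has $\dot x=H_\eta^{-1}\mathrm{P}_{\Tang{x}{\Var{I}}}\dot a$. Invertibility of $\deriv{\pi_\Var{I}}{(x,y)}$ on the second factor then directly yields $\dot y=(\deriv{\pi_\Var{O}}{(x,y)})(\deriv{\pi_\Var{I}}{(x,y)})^{-1}\dot x$.

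Composing these, the operator in \cref{def_kappa} equals $(\deriv{\pi_\Var{O}}{(x,y)})(\deriv{\pi_\Var{I}}{(x,y)})^{-1}H_\eta^{-1}\mathrm{P}_{\Tang{x}{\Var{I}}}:\R^n\to\Tang{y}{\Var{O}}$; its spectral norm equals that of the operator $\Tang{x}{\Var{I}}\to\Tang{y}{\Var{O}}$ obtained by discarding $\mathrm{P}_{\Tang{x}{\Var{I}}}$, because the supremum in \cref{eqn_spectral_norm} is already attained on $\dot a\in\Tang{x}{\Var{I}}$: the normal component of $\dot a$ lies in the kernel of the composition and only enlarges $\|\dot a\|$. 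This yields the claimed formula on $\Var{W}_\mathrm{GCPP}$. For $(a,x,y)\in\Var{S}_\mathrm{GCPP}\setminus\Var{W}_\mathrm{GCPP}$ the definition of $\Var{W}_\mathrm{GCPP}$ together with \Cref{main2} forces either $H_\eta$ or $\deriv{\pi_\Var{I}}{(x,y)}$ to be singular, so the right-hand side equals $+\infty$ by the footnote convention; \Cref{cor_kappa_gcpp_cont} then follows by observing that the formula depends continuously on $(a,x,y)$ on $\Var{W}_\mathrm{GCPP}$ and that the spectral norm of the composition diverges along any sequence in $\Var{W}_\mathrm{GCPP}$ converging to an ill-posed tuple (available since $\Var{W}_\mathrm{GCPP}$ is dense in $\Var{S}_\mathrm{GCPP}$ by \Cref{V_GCPP_is_a_manifold}). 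The main delicate points I anticipate are verifying the sign convention in $\dot\eta^T=-S_\eta\dot x$ so that it aligns with $H_\eta=\mathbf{1}-S_\eta$ from \cref{eqn_Hess_distance}, and justifying that $\mathrm{P}_{\Tang{x}{\Var{I}}}$ may be discarded without affecting the spectral norm in the final identification.
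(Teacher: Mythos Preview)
The paper states this corollary without proof, as an immediate consequence of \Cref{main3}. Most of your proposal is in fact a reproof of \Cref{main3} itself, via the normal-bundle parametrization of $\Var{S}_\mathrm{CPP}$ and chaining through the $\Var{S}$-factor; this is essentially the paper's own argument for that theorem, and your derivation of the formula together with the ``finite if and only if $(a,x,y)\in\Var{W}_\mathrm{GCPP}$'' clause are correct.

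Where your proposal has a genuine gap is the continuity at ill-posed tuples. You assert that the spectral norm of the composition diverges along every sequence in $\Var{W}_\mathrm{GCPP}$ converging to an ill-posed tuple, but give no argument---and this is not automatic. If at the limit one has $(x,y)\in\Var{W}$ while $H_\eta$ is singular with kernel direction $v\in\Tang{x}{\Var{I}}$, divergence requires $Av\neq0$ where $A:=(\deriv{\pi_\Var{O}}{(x,y)})(\deriv{\pi_\Var{I}}{(x,y)})^{-1}$. When $A$ annihilates $v$, the product $A_n H_{\eta_n}^{-1}$ need not blow up. A blunt instance: take $\Var{I}$ the unit circle in $\R^2$, $\Var{O}=\R$, and $\Var{S}$ the graph of a constant map, so $A\equiv0$; then $\kappa[\Var{S}_\mathrm{GCPP}]\equiv0$ on the open dense set $\Var{W}_\mathrm{GCPP}$ yet equals $+\infty$ by convention at every ill-posed tuple (the origin as ambient input), which is discontinuous. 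The paper does not address this either; making the corollary rigorous would seem to require an additional hypothesis excluding $\ker A\cap\ker H_\eta\neq\{0\}$ at ill-posed points, or replacing the blanket $+\infty$ convention by something finer. The delicate points you flagged (the sign in $\dot\eta^T=-S_\eta\dot x$ and dropping $\mathrm{P}_{\Tang{x}{\Var{I}}}$) are, by contrast, routine.
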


\changed{Since $\Var{I}$ inherits the Euclidean metric from $\R^n$, the following result is immediately obtained from \cref{main3}.}

\begin{corollary} \label{cor_bounds}
Let $(a,x,y)\in \Var{S}_{\mathrm{GCPP}}$ and $\eta=a-x$. We have
\[
\frac{\kappa[\Var{S}](x,y)}{ \max\limits_{1\leq i\leq m} \big\vert 1-c_i\Vert \eta\Vert\big\vert }
\leq \kappa[\Var{S}_{\mathrm{GCPP}}](a,x,y)
\leq \frac{\kappa[\Var{S}](x,y)}{\min\limits_{1\leq i\leq m} \big\vert 1-c_i\Vert \eta\Vert\big\vert},
\]
where $c_1,\ldots,c_m$ are the principal curvatures of $\Var{I}^m$ at $x$ in direction $\eta=a-x$.
\end{corollary}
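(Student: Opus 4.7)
The plan is to read off the claimed inequalities directly from the formula provided by \cref{main3} by applying submultiplicativity of the spectral norm to the composition $A H_\eta^{-1}$, where for brevity I write $A := (\deriv{\pi_{\Var{O}}}{(x,y)})(\deriv{\pi_{\Var{I}}}{(x,y)})^{-1}$. By \cref{def_kappa} we have $\kappa[\Var{S}](x,y) = \|A\|_{\Var{I}\to\Var{O}}$, and by \cref{main3} we have $\kappa[\Var{S}_{\mathrm{GCPP}}](a,x,y) = \|A H_\eta^{-1}\|_{\Var{I}\to\Var{O}}$. The content of the corollary is thus a comparison of these two spectral norms in terms of the spectrum of $H_\eta$.

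The first step is to identify this spectrum. Since $\Var{I}$ inherits the Euclidean metric from $\R^n$, the Weingarten map $S_\eta = \SFF_x(\eta)$ is self-adjoint on $\Tang{x}{\Var{I}}$, hence so is $H_\eta = \mathbf{1} - S_\eta$ from \cref{eqn_Hess_distance}. As recalled in \cref{sec_weingarten}, the eigenvalues of $S_\eta$ are $c_1\|\eta\|,\ldots,c_m\|\eta\|$, so those of $H_\eta$ are the real numbers $1-c_i\|\eta\|$. Since the spectral norm of a self-adjoint operator is the maximum absolute value of its eigenvalues, this gives
\begin{equation*}
\|H_\eta\|_{\Var{I}\to\Var{I}} = \max_{1\leq i\leq m}\bigl|1-c_i\|\eta\|\bigr|
\quad\text{and}\quad
\|H_\eta^{-1}\|_{\Var{I}\to\Var{I}} = \Bigl(\min_{1\leq i\leq m}\bigl|1-c_i\|\eta\|\bigr|\Bigr)^{-1}.
\end{equation*}

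With the spectrum of $H_\eta$ in hand, I would apply submultiplicativity of the spectral norm in two directions. For the upper bound, $\|A H_\eta^{-1}\|_{\Var{I}\to\Var{O}} \leq \|A\|_{\Var{I}\to\Var{O}}\cdot\|H_\eta^{-1}\|_{\Var{I}\to\Var{I}}$ immediately yields the claimed inequality. For the lower bound, writing $A = (A H_\eta^{-1}) H_\eta$ and using submultiplicativity gives $\|A\|_{\Var{I}\to\Var{O}} \leq \|A H_\eta^{-1}\|_{\Var{I}\to\Var{O}}\cdot\|H_\eta\|_{\Var{I}\to\Var{I}}$, which rearranges to the desired bound.

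The only small wrinkle I anticipate is verifying that the inequalities still make sense on the ill-posed locus $\Var{S}_\mathrm{GCPP}\setminus\Var{W}_\mathrm{GCPP}$; this is not a serious obstacle. By \cref{main3} either $\deriv{\pi_{\Var{I}}}{(x,y)}$ or $H_\eta$ is singular at such a point. Under the convention that spectral norms of ill-defined operators are $+\infty$, the first case forces $\kappa[\Var{S}](x,y) = \kappa[\Var{S}_\mathrm{GCPP}](a,x,y) = +\infty$ and both bounds are trivially $+\infty$. In the second case, singularity of $H_\eta$ means $\min_i|1-c_i\|\eta\|| = 0$, so the upper bound equals $+\infty$ as required by $\kappa[\Var{S}_\mathrm{GCPP}](a,x,y) = +\infty$, while the lower bound remains a valid (possibly finite) lower bound for $+\infty$.
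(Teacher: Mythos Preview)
Your proposal is correct and follows essentially the same approach as the paper: both use submultiplicativity of the spectral norm applied to $A H_\eta^{-1}$ and to $A = (A H_\eta^{-1}) H_\eta$, together with the identification of the eigenvalues of $H_\eta$ as $1 - c_i\|\eta\|$. Your version is more detailed than the paper's in that you spell out the spectral norm of $H_\eta$ and $H_\eta^{-1}$ explicitly and address the ill-posed locus, but the core argument is identical.
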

\begin{proof}
Since $\kappa[\Var{S}](x,y) = \Vert (\deriv{\pi_\Var O}{(x,y)})\,(\deriv{\pi_\Var I}{(x,y)})^{-1}\Vert_{\Var{I}\to\Var{O}}$, the submultiplicativity of the spectral norm both yields
$\kappa[\Var{S}](x,y)
\leq \kappa[\Var{S}_{\mathrm{GCPP}}](a,x,y) \Vert H_{\eta}\Vert$
and also
$\kappa[\Var{S}_{\mathrm{GCPP}}](a,x,y)
\leq \kappa[\Var{S}](x,y)\Vert H_{\eta}^{-1}\Vert,$
concluding the proof.
\end{proof}

\section{Consequences for Riemannian optimization}\label{sec_optimization}

In \cref{sec_results_2}, we established a connection between the CPP and the Riemannian optimization problem~\cref{eqn_basic_riemannian}: $\Var{S}_\mathrm{CPP}$ is the manifold of \emph{critical tuples} $(a,x) \in \R^n \times \Var{I}$, such that $x$ is a critical point of the squared distance function from $\Var{I}$ to $a$.
For the general implicit formulation in \cref{eqn_gcpp}, however, the connection to Riemannian optimization is \changed{more complicated. For clarity, we therefore also characterize the condition number of local minimizers of \cref{eqn_IPIP} as the condition number of the \textit{unique} global minimizer of a localized Riemannian optimization problem.}

The solutions of the GCPP were defined as the generalization of the AP to critical points. However, we did not show that they too can be seen as the critical tuples of a distance function optimized over in a Riemannian optimization problem. This connection is established next and proved in \cref{sec_proofs_optim}.

\begin{proposition} \label{prop_riemannian_gcpp_cp}
$(a,x,y) \in \Var{W}_\mathrm{GCPP}$ if and only if $(a,(x,y))$ is a well-posed critical tuple of the function optimized over in the Riemannian optimization problem
\[
 \min_{(x,y) \in \Var{W}} \frac{1}{2} \| a - \pi_\Var{I}(x,y) \|^2.
\]
\end{proposition}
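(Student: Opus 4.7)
The plan is to unwind the definition of a ``well-posed critical tuple'' for the Riemannian optimization on $\Var{W}$ and match it term-by-term against the definition of $\Var{W}_\mathrm{GCPP}$. Write $f(x,y) := \frac{1}{2}\|a - \pi_\Var{I}(x,y)\|^2$ with $\pi_\Var{I}(x,y)=x$. In analogy with \cref{eqn_cpp}, the critical tuple locus of $f$ on $\Var{W}$ is the set of $(a,(x,y))\in\R^n\times\Var{W}$ such that $(x,y)$ is a critical point of $f$ at ambient input $a$, and well-posedness refers to invertibility of the differential of the projection onto $\R^n$, exactly as in \cref{sec_results_2}.

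The first step is to identify the critical points. The derivative of $f$ at $(x,y)\in\Var{W}$ acts on $(t_x,t_y)\in\Tang{(x,y)}{\Var{W}}$ as $\langle x-a,\,(\deriv{\pi_\Var{I}}{(x,y)})(t_x,t_y)\rangle$. By definition of $\Var{W}$, the map $\deriv{\pi_\Var{I}}{(x,y)}$ is a linear isomorphism onto $\Tang{x}{\Var{I}}$, so the vanishing of $\deriv{f}{(x,y)}$ is equivalent to $a-x\in\NSp{x}{\Var{I}}$, which is precisely $(a,x)\in\Var{S}_\mathrm{CPP}$. Consequently, the assignment $(a,x,y)\mapsto (a,(x,y))$ sets up a bijection between triples $(a,x,y)\in\Var{S}_\mathrm{GCPP}$ with $(x,y)\in\Var{W}$ and critical tuples of $f$ on $\Var{W}$.

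The second step is to transport well-posedness through this bijection via the inverse function theorem. Since $\pi_\Var{I}|_\Var{W}$ is a local diffeomorphism onto its image in $\Var{I}$, the lift $(a,x)\mapsto (a,(x, (\pi_\Var{I}|_\Var{W})^{-1}(x)))$ is a local diffeomorphism from $\Var{S}_\mathrm{CPP}$ (restricted to the image $\pi_\Var{I}(\Var{W})$) onto the critical tuple locus of $f$. By construction this lift intertwines the two projections onto $\R^n$, so their differentials are conjugate via linear isomorphisms and hence simultaneously invertible. Therefore, the tuple $(a,(x,y))$ is well-posed if and only if $(a,x)\in\Var{W}_\mathrm{CPP}$. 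Combined with the requirement $(x,y)\in\Var{W}$, this recovers $(a,x,y)\in\Var{W}_\mathrm{GCPP}$ by definition.

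The main obstacle is essentially bookkeeping: making precise a ``critical tuple'' framework for an optimization problem whose feasible set $\Var{W}\subset\Var{I}\times\Var{O}$ is not embedded in $\R^n$. This is resolved by working locally through the diffeomorphism $\pi_\Var{I}|_\Var{W}$, which reduces the analysis to the CPP of \cref{sec_results_2} on $\Var{I}$. The manifold structures match up by \cref{C_is_a_manifold,lem_WCPP_is_manifold} together with the fact that the lift above is a local diffeomorphism, and one may equivalently phrase well-posedness here as invertibility of $H_\eta$ via \cref{main2}, which is again equivalent to $(a,x)\in\Var{W}_\mathrm{CPP}$.
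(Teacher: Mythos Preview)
Your first step---computing $\deriv{f}{(x,y)}$ and using that $\deriv{\pi_\Var{I}}{(x,y)}$ is an isomorphism onto $\Tang{x}{\Var{I}}$ to conclude that criticality is equivalent to $a-x\in\NSp{x}{\Var{I}}$---is exactly the paper's argument. The paper's own proof in fact stops there: it only establishes the critical-point equivalence and leaves the well-posedness part implicit. Your second step, transporting well-posedness through the local diffeomorphism $(a,x)\mapsto(a,x,(\pi_\Var{I}|_\Var{W})^{-1}(x))$ and observing that it intertwines the two projections to $\R^n$, is a correct and more explicit treatment of what the paper glosses over. So your proof is essentially the paper's, but more complete on the well-posedness side.
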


Our main motivation for studying critical points instead of only the global minimizer as in \cref{eqn_ap} stems from our desire to predict the sensitivity of outputs of Riemannian optimization methods for solving approximation problems like \cref{eqn_basic_riemannian,eqn_PIP,eqn_IPIP}. When applied to a well-posed optimization problem, these methods in general only guarantee convergence to critical points \cite{AMS2008}. However, in practice, convergence to critical points that are not local minimizers is extremely unlikely \cite{AMS2008}. The reason is that they are \textit{unstable outputs} of these algorithms: Applying a tiny perturbation to such critical points will cause the optimization method to escape their vicinity, and with high probability, converge to a local minimizer instead.

\begin{remark}
One should be careful to separate \emph{condition} from \emph{numerical stability}. The former is a property of a problem, the latter the property of an algorithm.
We do \emph{not} claim that computing critical points of GCPPs other than local minimizers are \emph{ill-conditioned problems}. We only state that many Riemannian optimization methods are \emph{unstable algorithms} for computing them via \cref{eqn_IPIP}. This is not a bad property, since the goal was to find minimizers, not critical points!
\end{remark}

For critical points of the GCPP that are local minimizers \changed{of the problem in \cref{prop_riemannian_gcpp_cp} we can characterize their condition number as the condition number of the unique global minimizer of a localized problem. This result is proved in \cref{sec_proofs_optim}.}

\begin{theorem}[Condition of local minimizers] \label{prop_riemannian_gcpp}
Let the input manifold~$\Var{I}$, the output manifold $\Var O$, and well-posed loci $\Var{W}$ and $\Var{W}_\mathrm{GCPP}$ be as in \cref{eqn_gcpp}. Assume that $(a^\star,x^\star,y^\star) \in \Var{W}_\mathrm{GCPP}$ and $x^\star$ is a local minimizer of $d_{a^\star} :\Var{I} \to \R, x \mapsto \frac{1}{2} \| a^\star - x \|^2$.
Then, there exist open neighborhoods $\Var{A}_{a^\star} \subset \R^n$ around $a^\star$ and  $\Var{N}_{(x^\star,y^\star)} \subset \Var{W}$ around $(x^\star,y^\star)$, so that
\[
 \rho_{(a^\star,x^\star,y^\star)} : \Var{A}_{a^\star} \to \Var{O},\quad a \mapsto \pi_\Var{O} \circ \argmin_{(x,y) \in \Var{N}_{(x^\star,y^\star)}} \frac{1}{2} \| a - \pi_{\Var{I}}(x,y) \|^2
\]
is an \cref{eqn_ap} and the condition number \changed{of this smooth map} satisfies 
\changed{\begin{align*}
\kappa[\rho_{(a^\star,x^\star,y^\star)}](a) 
&= \kappa[\Var{S}_\mathrm{GCPP}](a, \mathrm{P}_{\Var{I}_{x^\star}}(a), \rho_{(a^\star,x^\star,y^\star)}(a)),
\end{align*}%
where $\Var{I}_{x^\star}=\pi_\Var{I}(\Var{N}_{(x^\star,y^\star)})$ is the projection onto the first factor, and $\mathrm{P}_{\Var{I}_{x^\star}}(a) = \argmin_{x\in\Var{I}_{x^\star}} \| a - x \|^2$ is the nonlinear projection onto $\Var{I}_{x^\star}$.}%
\end{theorem}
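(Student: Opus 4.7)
The plan is to reduce the problem to an application of \cref{main11} by constructing neighborhoods on which the multi-valued critical point map from \cref{eqn_gcpp} collapses to a smooth, single-valued global-minimizer map. The essential ingredients are the well-posedness of $(a^\star,x^\star,y^\star)$, which guarantees invertibility of both $\deriv{\pi_{\Var{I}}}{(x^\star,y^\star)}$ and the Hessian $H_{\eta^\star}$ with $\eta^\star = a^\star - x^\star$, together with the second-order necessary condition for local minimizers of $d_{a^\star}$, which forces $H_{\eta^\star}\succeq 0$. Combining these, $H_{\eta^\star}\succ 0$, so $x^\star$ is a strict local minimizer of $d_{a^\star}$, which enables a smooth selection theorem for nearby ambient inputs.

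First I would apply the inverse function theorem to $\Pi_{\R^n}:\Var{S}_\mathrm{CPP}\to\R^n$ at $(a^\star,x^\star)$, which is possible since $(a^\star,x^\star)\in\Var{W}_\mathrm{CPP}$ by the definition of $\Var{W}_\mathrm{GCPP}$. This produces a smooth local section $\phi$ selecting, for each $a$ in some neighborhood $\Var{A}'\subset\R^n$ of $a^\star$, the unique critical point of $d_a$ near $x^\star$. Because eigenvalues depend continuously on the entries of a self-adjoint operator, the Hessian $H_{a-\phi(a)}$ remains positive definite after possibly shrinking $\Var{A}'$, so $\phi(a)$ is a strict local minimizer of $d_a$. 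A standard compactness argument, using the quadratic lower bound on $d_{a^\star}$ provided by $H_{\eta^\star}\succ 0$, then yields an open neighborhood $\Var{I}_{x^\star}\subset\Var{I}$ of $x^\star$ and a smaller ambient neighborhood $\Var{A}_{a^\star}\subset\Var{A}'$ such that $\phi(a)$ is the \emph{unique} global minimizer of $d_a|_{\Var{I}_{x^\star}}$, i.e., $\phi(a)=\mathrm{P}_{\Var{I}_{x^\star}}(a)$. Simultaneously, the inverse function theorem applied to $\pi_\Var{I}$ at $(x^\star,y^\star)\in\Var{W}$ provides, after further shrinking, a smooth local inverse $\pi_\Var{I}^{-1}:\Var{I}_{x^\star}\to\Var{N}_{(x^\star,y^\star)}\subset\Var{W}$ satisfying $\pi_\Var{I}^{-1}(x^\star)=(x^\star,y^\star)$.

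With these neighborhoods in hand, $\rho_{(a^\star,x^\star,y^\star)}=\pi_\Var{O}\circ\pi_\Var{I}^{-1}\circ\mathrm{P}_{\Var{I}_{x^\star}}$ is a smooth map defined on $\Var{A}_{a^\star}$ and coincides with the localized minimizer map in the statement. After shrinking once more so that $\Var{A}_{a^\star}$ is contained in a tubular neighborhood of $\Var{I}_{x^\star}$, the map $\rho_{(a^\star,x^\star,y^\star)}$ is exactly the \cref{eqn_ap} associated to the restricted solution manifold $\Var{S}\cap(\Var{I}_{x^\star}\times\Var{N}_{(x^\star,y^\star)})$, so \cref{main11} applies and yields the condition-number expression in terms of $(\deriv{\pi_\Var{O}}{(x,y)})(\deriv{\pi_\Var{I}}{(x,y)})^{-1} H_{a-x}^{-1}$. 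Because the tangent space and second fundamental form of $\Var{I}_{x^\star}$ at $x$ agree with those of $\Var{I}$ at $x$, and $\pi_\Var{I},\pi_\Var{O}$ on the restricted solution manifold agree with the original projections, comparing with \cref{main3} at the tuple $(a,\mathrm{P}_{\Var{I}_{x^\star}}(a),\rho_{(a^\star,x^\star,y^\star)}(a))$ yields the claimed identity.

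The main obstacle I anticipate is the delicate shrinking argument in the second step: ensuring simultaneously that $\phi(a)$ is well-defined, a strict local minimizer, the unique global minimizer over $\Var{I}_{x^\star}$, and that $\pi_\Var{I}^{-1}$ exists on $\Var{I}_{x^\star}$ and takes values in $\Var{W}$. This requires iterating applications of the inverse function theorem and compactness arguments, carefully balancing the sizes of $\Var{A}_{a^\star}$ and $\Var{I}_{x^\star}$. The strict positive-definiteness $H_{\eta^\star}\succ 0$ produced by combining well-posedness with the second-order optimality condition is the crucial analytic input that makes the uniqueness portion tractable via a quadratic growth estimate.
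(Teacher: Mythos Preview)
Your approach is correct and follows the same overall architecture as the paper: apply the inverse function theorem to $\Pi_{\R^n}$ at $(a^\star,x^\star)$ and to $\pi_{\Var I}$ at $(x^\star,y^\star)$, establish that the resulting local selection is the \emph{unique global} minimizer on a small patch $\Var I_{x^\star}$, identify $\rho_{(a^\star,x^\star,y^\star)}$ with $\pi_{\Var O}\circ\pi_{\Var I}^{-1}\circ\mathrm{P}_{\Var I_{x^\star}}$, and finish by invoking \cref{main11} and \cref{main3}.

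The genuine difference lies in the uniqueness step. You exploit an analytic observation that the paper does not make explicit: since $(a^\star,x^\star)\in\Var W_{\mathrm{CPP}}$ forces $H_{\eta^\star}$ to be invertible (\cref{lem_character_1-S}) and the second-order necessary condition at a local minimizer forces $H_{\eta^\star}\succeq 0$, one gets $H_{\eta^\star}\succ 0$, hence a quadratic growth bound for $d_{a^\star}$ near $x^\star$, from which the interior-global-minimizer property for nearby $a$ follows by a standard stability argument. The paper instead runs a hands-on geometric estimate: it first arranges that $\Var I_{x^\star}$ lies on one side of the affine tangent space in the direction $a^\star-x^\star$, and then shows via triangle-inequality bounds that for $a$ in a sufficiently small ball the distance to any boundary point of $\Var I_{x^\star}$ strictly exceeds the distance to the critical point $\phi(a)$. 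Your route is shorter and isolates the role of the Riemannian Hessian cleanly; the paper's route is more explicit about the size of the neighborhoods and avoids appealing to a ``standard'' stability result. Either way the conclusion and the final reduction to \cref{main11} and \cref{main3} are identical.
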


Taking $\Var{I}=\Var{O}$ and $\Var{S}$ as the graph of the identity map $\mathbf{1}$, the next result follows.

\begin{corollary}
Let the input manifold $\Var{I}$, and well-posed loci $\Var{W}$ and $\Var{W}_\mathrm{CPP}$ be as in \cref{eqn_cpp}. Assume that $(a^\star,x^\star) \in \Var{W}_\mathrm{CPP}$ is well-posed and that $x^\star$ is a local minimizer of $d_{a^\star}:\Var{I} \to \R,\; x \mapsto \frac{1}{2} \| a^\star - x \|^2$.
Then, there exist open neighborhoods $\Var{A}_{a^\star} \subset \R^n$ around $a^\star$ and $\Var{I}_{x^\star} \subset \Var I$ around $x^\star$, so that 
\[
 \rho_{(a^\star,x^\star)}: \Var{A}_{a^\star} \to \Var{I}, \quad a \mapsto \argmin_{x \in \Var{I}_{x^\star}} \frac{1}{2} \| a - x \|^2
\]
is an \cref{eqn_ap} and its condition number is \changed{
\(
\kappa[\rho_{(a^\star,x^\star)}](a) 
= \kappa[\Var{S}_\mathrm{CPP}](a,\rho_{(a^\star,x^\star)}(a)).
\)}
\end{corollary}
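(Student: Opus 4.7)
My plan is to derive this corollary as a direct specialization of \cref{prop_riemannian_gcpp}. Specifically, I would take $\Var{O} = \Var{I}$ and let $\Var{S} = \Delta_\Var{I} := \{(x,x) \mid x \in \Var{I}\}$ be the graph of $\mathbf{1}_\Var{I}$. This is an embedded submanifold of $\Var{I}\times\Var{I}$ satisfying \cref{ass_1}: both coordinate projections $\pi_\Var{I}, \pi_\Var{O} : \Delta_\Var{I} \to \Var{I}$ are diffeomorphisms, so their derivatives are isomorphisms everywhere, yielding $\Var{W} = \Delta_\Var{I}$. Consequently, the generalized critical point locus reduces to $\Var{S}_\mathrm{GCPP} = \{(a,x,x) \mid (a,x) \in \Var{S}_\mathrm{CPP}\}$, canonically identified with $\Var{S}_\mathrm{CPP}$ via $(a,x,x) \leftrightarrow (a,x)$; the same identification sends $\Var{W}_\mathrm{GCPP}$ to $\Var{W}_\mathrm{CPP}$.

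Next, I would apply \cref{prop_riemannian_gcpp} to the well-posed triple $(a^\star,x^\star,x^\star)$, noting that $x^\star$ being a local minimizer of $d_{a^\star}$ is exactly the hypothesis required. This produces neighborhoods $\Var{A}_{a^\star}\subset \R^n$ of $a^\star$ and $\Var{N}_{(x^\star,x^\star)} \subset \Var{W}$ of $(x^\star,x^\star)$. Define $\Var{I}_{x^\star} := \pi_\Var{I}(\Var{N}_{(x^\star,x^\star)})$, which is an open neighborhood of $x^\star$ in $\Var{I}$ since $\pi_\Var{I}$ restricted to $\Delta_\Var{I}$ is a diffeomorphism. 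Because every element of $\Var{N}_{(x^\star,x^\star)}$ has the form $(x,x)$, the optimization defining $\rho_{(a^\star,x^\star,x^\star)}$ collapses to $\argmin_{x \in \Var{I}_{x^\star}} \tfrac{1}{2}\|a - x\|^2$, so $\rho_{(a^\star,x^\star,x^\star)}(a) = \rho_{(a^\star,x^\star)}(a) = \mathrm{P}_{\Var{I}_{x^\star}}(a)$. In particular, $\rho_{(a^\star,x^\star)}$ inherits from \cref{prop_riemannian_gcpp} the property of being the solution map of an \cref{eqn_ap}.

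To conclude, I would match the condition numbers. By \cref{prop_riemannian_gcpp},
\[
\kappa[\rho_{(a^\star,x^\star,x^\star)}](a) = \kappa[\Var{S}_\mathrm{GCPP}]\bigl(a,\,\mathrm{P}_{\Var{I}_{x^\star}}(a),\,\rho_{(a^\star,x^\star,x^\star)}(a)\bigr),
\]
and by the previous paragraph the second and third coordinates on the right-hand side are equal to $\rho_{(a^\star,x^\star)}(a)$. Under the identification $\Var{S}_\mathrm{GCPP} \leftrightarrow \Var{S}_\mathrm{CPP}$ this simplifies to $\kappa[\Var{S}_\mathrm{CPP}](a,\rho_{(a^\star,x^\star)}(a))$, provided that the two condition-number formulas actually coincide in this diagonal case. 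This compatibility is the only non-trivial point of the argument, but it is immediate from the main theorems: in \cref{main3}, the factor $(\deriv{\pi_\Var{O}}{(x,x)})(\deriv{\pi_\Var{I}}{(x,x)})^{-1}$ is the identity on $\Tang{x}{\Var{I}}$, so the expression reduces to $\|H_\eta^{-1}\|_{\Var{I}\to\Var{I}}$, which is exactly $\kappa[\Var{S}_\mathrm{CPP}](a,x)$ according to \cref{main2}.

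In short, the corollary reduces by pure bookkeeping to the statement that $\kappa[\Var{S}_\mathrm{GCPP}]$ specializes to $\kappa[\Var{S}_\mathrm{CPP}]$ when $\Var{S}$ is the diagonal, and no genuinely new analytical work is required. The only place where a careful check is warranted is confirming that the neighborhood produced by \cref{prop_riemannian_gcpp} in $\Var{W}$ really is of the diagonal form $\{(x,x) : x \in \Var{I}_{x^\star}\}$, which follows because $\Var{W} = \Delta_\Var{I}$ and we may take the $\Var{I}_{x^\star}$ in the corollary to be the projection of any neighborhood supplied by the theorem.
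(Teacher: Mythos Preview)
Your proposal is correct and follows exactly the paper's approach: the paper simply states that taking $\Var{O}=\Var{I}$ and $\Var{S}$ as the graph of the identity map $\mathbf{1}$ in \cref{prop_riemannian_gcpp} yields the corollary, and you have carefully unpacked precisely this specialization. Your additional verification that $\kappa[\Var{S}_\mathrm{GCPP}]$ collapses to $\kappa[\Var{S}_\mathrm{CPP}]$ via \cref{main2,main3} in the diagonal case is sound and makes explicit what the paper leaves to the reader.
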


\changed{An interpretation of these results is as follows. 
The condition number of a generalized critical point $(a^\star,x^\star,y^\star)$ that happens to correspond to a local minimum of \cref{eqn_IPIP} equals the condition number of the map $\rho_{(a^\star,x^\star,y^\star)}$. This smooth map models a Riemannian AP on the localized computational problem $\Var{N}_{(x^\star,y^\star)} \subset \Var{W}$ which has a unique global minimum for all ambient inputs from $\Var{A}_{a^\star}$. Consequently, every local minimum of \cref{eqn_IPIP} is \textit{robust} in the sense that it moves \textit{smoothly} according to $\rho_{(a^\star,x^\star,y^\star)}$ in neighborhoods of the input $a^\star$ and $(x^\star,y^\star)\in\Var{W}$. In this open neighborhood, the critical point remains a local mimimum.}

\changed{The foregoing entails that if a local minimum $(x^\star,y^\star)\in\Var{W}$ is computed for \cref{eqn_IPIP} with ambient input $a^\star$ using Riemannian optimization, then computing a local minimum $(x, y)$ for \cref{eqn_IPIP} with perturbed input $a^\star + \Delta$ and starting point $(x^\star,y^\star)$ results in $\mathrm{dist}_{\Var{O}}(y,y^\star) \le \kappa[\Var{S}_{\mathrm{GCPP}}](a^\star,x^\star,y^\star) \| \Delta \| + o(\| \Delta \|)$, where $\mathrm{dist}_\Var{O}$ is as in \cref{computational_problem2}, provided that $\|\Delta\|$ is sufficiently small.}

\section{Computing the condition number} \label{sec_computational}
\changed{Three main ingredients are required for evaluating the condition number of \cref{eqn_IPIP} numerically: 
\begin{enumerate}
 \item[(i)] the inverse of the Riemannian Hessian $H_\eta$ from \cref{eqn_Hess_distance},
 \item[(ii)] the derivative of the computational problem $(\deriv{\pi_\Var{O}}{(x,y)})(\deriv{\pi_\Var{I}}{(x,y)})^{-1}$, and 
 \item[(iii)] the spectral norm \cref{eqn_spectral_norm}.
\end{enumerate}
The only additional item for evaluating the condition number of \cref{eqn_IPIP} relative to the idealized inverse problem ``given $x \in \Var{I}$, find a $y\in\Var{O}$ such that $(x,y)\in\Var{S}$'' is the curvature term $H_\eta^{-1}$ from item (i). Therefore, we focus on evaluating (i) and only briefly mention the standard techniques for (ii) and (iii).}

\changed{The derivative in (ii) is often available analytically as in \cref{rem_explicit_derivative}. If no analytical expression can be derived, it can be numerically approximated when $\Var{O}\subset\R^q$ is an embedded submanifold by computing the standard Jacobian of $\Var{S}$, viewed as the graph of a map $\R^n\to\R^q$, and composing it with the projections onto $\Tang{x}{\Var{I}}$ and $\Tang{y}{\Var{O}}$.} 

\changed{Evaluating the spectral norm in (iii) is a linear algebra problem. Let $\Var{I}\subset\R^n$ be embedded with the standard inherited metric and let the Riemannian metric of $\Var{O}$ be given in coordinates by a matrix $G$. If $G = R^T R$ is its Cholesky factorization \cite{matrix_computations}, then \(\| A \|_{\Var{I}\to\Var{O}} = \| R A \|_2 = \sigma_1(RA),\) where $A : \Tang{x}{\Var{I}} \to \Tang{A(x)}{\Var{O}}$ is a linear map and $\|\cdot\|_2$ is the usual spectral norm, i.e., the largest singular value $\sigma_1(\cdot)$. The largest singular value can be computed numerically in $\mathcal{O}( \min\{ m^2 p, p^2 m \} )$ operations, where $m=\dim\Var{I}$ and $p=\dim\Var{O}$, using standard algorithms \cite{matrix_computations}. If the linear map $LA$ can be applied in fewer than $\mathcal{O}(m p)$ operations, a power iteration can be more efficient \cite{TAEP}.}

\changed{The Riemannian Hessian $H_\eta$ can be computed analytically using standard techniques from differential geometry for computing Weingarten maps \cite{Petersen,ONeill1983,ONeill2001,riemannian_geometry,Lee1997}, or it can be approximated numerically \cite{Boumal2015}. Numerical approximation of the Hessian with forward differences is implemented in the Matlab package Manopt \cite{manopt}. Its inverse can in both cases be computed with $\mathcal{O}(m^3)$ operations.}

\changed{We found two main analytical approaches helpful to compute Weingarten maps of embedded submanifolds $\Var{I}\subset\R^n$. The first relies on the following characterization of the Weingarten map $S_{\eta_x}$ in the direction of a normal vector $\eta_x = a - x$. It is defined in \cite[Section~6.2]{riemannian_geometry} as
\begin{equation}\label{def_shape_operator}
S_{\eta_x}: \Tang{x}{\Var{I}} \to \Tang{x}{\Var{I}},\; v_x \mapsto  \mathrm{P}_{\Tang{x}{\Var{I}}}\left( - (\widetilde{\nabla}_{v_x} N)|_x \right),
\end{equation}
where $\mathrm{P}_{\Tang{x}{\Var{I}}}$ denotes the orthogonal projection onto $\Tang{x}{\Var{I}}$, $N$ is a smooth extension of $\eta$ to a normal vector field on an open neighborhood of $x$ in $\Var{I}$, and $\widetilde{\nabla}_{v_x}$ is the Euclidean covariant derivative \cite{Petersen,ONeill1983,ONeill2001,riemannian_geometry,Lee1997}. In practice, the latter can be computed as
\[
(\widetilde{\nabla}_{v_x} Y)|_x = \frac{\mathrm{d}}{\mathrm{d} t} Y|_{\gamma(t)},
\]
where $Y$ is an arbitrary vector field on $\Var{I}$ and $\gamma(t)$ is a smooth curve realizing $v_x$ \cite[Lemma 4.9]{Lee1997}. In other words, $S_{\eta_x}$ maps $\eta_x$ to the tangential part of the usual directional derivative of $N$ in direction of $v_x \in \Tang{x}{\Var{I}} \subset \R^n$.}


\changed{The second approach relies on the \textit{second fundamental form} of $\Var{I}$, which is defined as the projection of the Euclidean covariant derivative onto the normal space $\mathrm{N}_x \Var{I}$:
\[
\SFF_x(X,Y) := ( \SFF(X,Y) )|_x :=  \mathrm{P}_{\NSp{x}{\Var{I}}} \left( (\widetilde{\nabla}_{X|_x} Y)|_x \right),
\]
where now both $X$ and $Y$ are vector fields on $\Var{I}$. The second fundamental form is symmetric in $X$ and $Y$, and $\SFF_x(X,Y)$ only depends on the tangent vectors $X|_x$ and $Y|_x$. It can be viewed as a smooth map $\SFF_x : \Tang{x}{\Var{M}} \times \Tang{x}{\Var{M}} \to \NSp{x}{\Var{M}}$. Contracting the second fundamental form with a normal vector $\eta_x \in\NSp{x}{\Var{I}}$ yields an alternative definition of the Weingarten map: For all $v, w \in \Tang{x}{\Var{I}}$ we have
\(
 \langle S_{\eta_x} (v), w \rangle = \langle \SFF_x(v,w), \eta_x \rangle,
\)
where $\langle\cdot,\cdot\rangle$ is the Euclidean inner product.
Computing the second fundamental form is often facilitated by \emph{pushing forward} vector fields through a diffeomorphism $F : \Var{M} \to \Var{N}$.
In this case, there exists a vector field $Y$ on $\Var{N}$ that is \textit{$F$-related} to a vector field $X$ on $\Var{M}$: $Y|_p = (F_* X)|_p := (\deriv{F}{F^{-1}(p)})(X|_{F^{-1}(p)})$. The integral curves generated by $X$ and $Y = F_* X$ are related by Proposition 9.6 of \cite{Lee2013}. This approach will be illustrated in the next section.}

\changed{As a rule of thumb, we find that computation of the condition number of \cref{eqn_IPIP} is approximately as expensive as one iteration of a Riemannian Newton method for solving this problem. Typically, the cost is not worse than $\mathcal{O}(m^3 + n m^2 + \min\{m^2 p, p^2 m\})$ operations, where the terms correspond one-to-one to items (i)--(iii), and where $\Var{I}^m \subset \R^n$ and $p=\dim\Var{O}$.}

\section{Triangulation in computer vision} \label{sec_triangulation}

A rich source of approximation problems whose condition can be studied with the proposed framework is multiview geometry in computer vision; for an introduction to this domain see \cite{FL2001, HZ2003, Maybank1993}. The tasks consist of recovering information from $r \ge 2$ \textit{camera projections} of a scene in the \textit{world}~$\R^3$. We consider the \emph{pinhole camera model}. In this model the image formation process is modeled by the following transformation, as visualised in \cref{fig_cameras}:
\[
\mu_r: \vect{y} \mapsto \left[\, \frac{A_\ell \vect{y} + \vect{b}_\ell}{\vect{c}_\ell^T \vect{y} + d_\ell}\, \right]_{\ell=1}^r,
\]
where $A_\ell\in\R^{2\times 3}, \vect{b}_\ell\in\R^2, \vect{c}_\ell\in\R^3,$ and $d_\ell\in\R$; see \cite{FL2001, HZ2003, Maybank1993}. Clearly, $\mu_r$ is not defined on all of $\R^3$.  We clarify its domain in \cref{lem_triangulation_diff} below.

The vector $\vect{x} = \mu_r(y) \in \R^{2r}$ obtained by this image formation process is called a \emph{consistent point correspondence}.
Information that can often be identified from consistent point correspondences include camera parameters and scene structure \cite{FL2001, HZ2003, Maybank1993}.

As an example application of our theory we compute the condition number of the \emph{triangulation problem} in computer vision \cite[Chapter 12]{HZ2003}. In this computational problem we have $r \ge 2$ stationary projective cameras and a consistent point correspondence $\vect{x} = (\vect{x}_1, \vect{x}_2, \ldots, \vect{x}_r) \in \R^{2r}$. The goal is to retrieve the world point $\vect y\in\R^3$ from which they originate. Since the imaging process is subject to noisy measurements and the above idealized projective camera model holds only approximately \cite{HZ2003}, we expect that instead of $\vect x$ we are only given $\vect{a} = (\vect{a}_1, \vect{a}_2, \ldots, \vect{a}_r)$ close to $\vect x$. Thus, $\vect{x}$ is the true input of the problem, $\vect{y}$ is the output and $\vect{a}$ is the ambient input.

According to \cite[p.~314]{HZ2003} , the ``gold standard algorithm'' for triangulation solves the (Riemannian) optimization problem
\begin{align}\label{eqn_triangulation}
\min_{\vect{y} \in \R^{3}} \frac{1}{2} \| \vect{a} - \mu_r(\vect{y}) \|^2.
\end{align}
Consequently, the triangulation problem can be cast as a GCCP provided that two conditions hold: (i) the set of consistent point correspondences is an embedded manifold, and (ii) intersecting the back-projected rays is a smooth map from aforementioned manifold to $\R^3$. We verify both conditions in the following subsection.

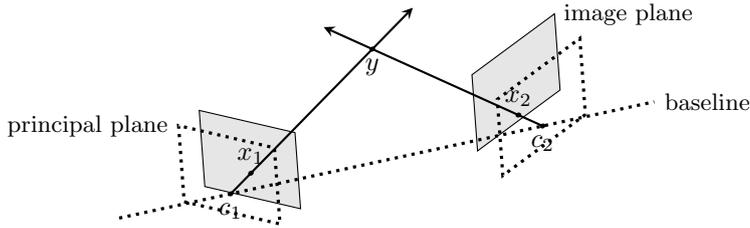
\begin{figure}
\begin{center}
\begin{tikzpicture}[rotate around x=10, rotate around y=25, rotate around z=0, scale=0.55]
\coordinate (c1) at (-3.5,0,1.5);
\coordinate (c2) at ( 3.5,0,1.5);
\coordinate (p) at (0,1,-2);
\coordinate (x1) at (-3,0.1428,1);
\coordinate (x2) at (3,0.1428,1);

\draw[fill=black!10] (-4,-0.5,0) -- (-2,-0.5,2) -- (-2,1.5,2) -- (-4,1.5,0) -- cycle;
\draw[fill=black!10] ( 4,0,0) -- ( 2,0,2) -- ( 2,2,2) -- ( 4,2,0) node[right] {{\small image plane}} -- cycle;
\draw[very thick,dotted] (-4.5,-0.5,0.5) -- (-2.5,-0.5,2.5) -- (-2.5,1.5,2.5) -- (-4.5,1.5,0.5) node[left] {{\small principal plane}} -- cycle;
\draw[very thick,dotted] (4.5,-0.5,0.5) -- (2.5,-0.5,2.5) -- (2.5,1.5,2.5) -- (4.5,1.5,0.5) -- cycle;
\draw[very thick,dotted] (-6,0,1.5) -- (c1) -- (c2) -- (6,0,1.5) node[right] {{\small baseline}};
\draw[thick] (c1) -- (p) edge[-stealth] (1,1.2857,-3);
\draw[thick] (c2) -- (p) edge[-stealth] (-1,1.2857,-3);
\draw[fill] (c1) circle (0.05) node[below] {$\vect{c}_1$};
\draw[fill] (c2) circle (0.05) node[below] {$\vect{c}_2$};
\draw[fill] (p) circle (0.05) node[below] {$\vect{y}$};
\draw[fill] (x1) circle (0.05) node[above] {$\vect{x}_1$};
\draw[fill] (x2) circle (0.05) node[above] {$\vect{x}_2$};
\end{tikzpicture}
\end{center}
\caption{Setup of the $2$-camera triangulation problem. The world coordinates of $\vect{y}\in\R^3$ are to be reconstructed from the projections $\vect{x}_1, \vect{x}_2 \in \R^2$ (in the respective image coordinates) of $\vect{y}$ onto the image planes of the cameras with centers at $\vect{c}_1$ and $\vect{c}_2$ (in world coordinates) respectively.}
\label{fig_cameras}
\end{figure}

\subsection{The multiview manifold}
The image formation process can be interpreted as a projective transformation from the projective space $\Pj^3$ to $\Pj^2$ in terms of the $4 \times 3$ \textit{camera matrices} $P_\ell=\left[\begin{smallmatrix} A_\ell & \vect{b}_\ell \\ \vect{c}_\ell^T & d_\ell\end{smallmatrix}\right]$ \cite{FL2001, HZ2003, Maybank1993}.
It yields homogeneous coordinates of $\vect{x}_\ell = ( z^\ell_{1} / z^\ell_3, z^\ell_2 / z^\ell_3 ) \in \R^2$ where $\vect{z}^\ell = P_\ell \left[\begin{smallmatrix}\vect{y} \\ 1 \end{smallmatrix}\right]$.
Note that if $z_3^\ell = 0$ then the point $\vect{y}$ has no projection\footnote{Actually, it has a projection if we allow points at infinity, i.e., if we consider the triangulation problem in projective space.} onto the image plane of the $\ell$-th camera, which occurs precisely when $\vect{y}$ lies on the \textit{principal plane} of the camera \cite[p.~160]{HZ2003}. This is the plane parallel to the image plane through the camera center $\vect{c}_\ell$; see \cref{fig_cameras}. It is also known that points on the baseline of two cameras, i.e., the line connecting the camera centers visualised by the dashed line in \cref{fig_cameras}, all project to the same two points on the two cameras, called the epipoles \cite[Section 10.1]{HZ2003}. Such points cannot be triangulated from only two images. For simplicity, let $\Var{B} \subset \R^3$ be the union of the principal planes of the first two cameras and their baseline, so $\Var{B}$ is a $2$-dimensional subvariety. The next result shows that a subset of the consistent point correspondences outside of $\Var{B}$ forms a smooth embedded submanifold of $\R^{2r}$.

\begin{lemma}\label{lem_triangulation_diff}
Let $\Var{O}_{\mathrm{MV}} = \R^3 \setminus \Var{B}$ with $\Var{B}$ as above.
The map $\mu_r : \Var{O}_{\mathrm{MV}} \to \R^{2r}$
is a diffeomorphism onto its image $\Var{I}_{\mathrm{MV}} = \mu_r(\Var{O}_{\mathrm{MV}})$.
\end{lemma}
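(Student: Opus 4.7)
The plan is to exhibit $\mu_r$ as a smooth embedding by constructing an explicit smooth left inverse defined on an open neighborhood of $\Var{I}_{\mathrm{MV}}$ in $\R^{2r}$; the existence of such a smooth retraction will automatically yield that $\mu_r$ is a diffeomorphism onto its image. Smoothness of $\mu_r$ on $\Var{O}_{\mathrm{MV}}$ is immediate: each component $(A_\ell \vect{y} + \vect{b}_\ell)/(\vect{c}_\ell^T \vect{y} + d_\ell)$ is a rational function that is smooth wherever $\vect{c}_\ell^T \vect{y} + d_\ell \neq 0$, and on $\Var{O}_{\mathrm{MV}}$ the first two denominators are nonzero by removal of the principal planes (the remaining principal planes for $\ell \geq 3$ must be implicitly understood to be in $\Var{B}$ as well for the expression to be well-defined).

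Next, I would rewrite the defining equation $\vect{x}_\ell = (A_\ell \vect{y} + \vect{b}_\ell)/(\vect{c}_\ell^T \vect{y} + d_\ell)$ as the affine-linear system
\[
 (A_\ell - \vect{x}_\ell \vect{c}_\ell^T)\, \vect{y} \;=\; \vect{x}_\ell d_\ell - \vect{b}_\ell.
\]
Stacking for $\ell=1,2$ produces a $4 \times 3$ system $M(\vect{x}_1,\vect{x}_2)\, \vect{y} = \vect{r}(\vect{x}_1,\vect{x}_2)$, where both $M$ and $\vect{r}$ depend smoothly on $(\vect{x}_1,\vect{x}_2)$. The geometric heart of the proof is to verify that $M$ has full column rank precisely when $\vect{y} \not\in \Var{B}$: a vector $\vect{v} \in \ker M$ would mean that $\vect{y} + t\vect{v}$ projects under the first two cameras to the same pair $(\vect{x}_1,\vect{x}_2)$ for all small $t$, so that the two back-projected rays coincide, which classically happens exactly when $\vect{y}$ lies on the baseline connecting $\vect{c}_1$ and $\vect{c}_2$.

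On the open subset $U \subset \R^{2r}$ where $M$ has rank $3$, one defines the smooth map $\nu : U \to \R^3$, $\vect{x} \mapsto (M^T M)^{-1} M^T \vect{r}$, whose smoothness follows from the smoothness of matrix inversion on the nonsingular locus. By construction $\Var{I}_{\mathrm{MV}} \subset U$ and $\nu \circ \mu_r = \mathbf{1}$ on $\Var{O}_{\mathrm{MV}}$, which immediately gives injectivity of $\mu_r$ and surjectivity of $\deriv{\mu_r}{\vect{y}}$ by the chain rule. Since $\nu|_{\Var{I}_{\mathrm{MV}}}$ is a continuous inverse, $\mu_r$ is also a topological embedding, and the global rank theorem \cite[Theorem~4.14]{Lee2013} then concludes that $\Var{I}_{\mathrm{MV}}$ is a smoothly embedded $3$-dimensional submanifold of $\R^{2r}$ on which $\mu_r$ restricts to a diffeomorphism. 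The main obstacle is the rank verification of $M$, which can be carried out either by the geometric argument above or by a direct computation identifying $\ker M$ with the direction of the baseline whenever it is nontrivial; all other steps are routine consequences of the explicit smooth retraction $\nu$.
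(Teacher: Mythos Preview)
Your argument is correct (with one slip: from $\nu\circ\mu_r=\mathbf{1}$ the chain rule gives \emph{injectivity}, not surjectivity, of $\deriv{\mu_r}{\vect{y}}$; this is what you need for an immersion, and the rest of your reasoning already uses it that way). The overall structure---build a smooth left inverse from the first two cameras, deduce that $\mu_r$ is an injective immersion with continuous inverse, hence a smooth embedding---is sound, and your explicit pseudoinverse $\nu=(M^TM)^{-1}M^T\vect{r}$ on the full-rank locus $U$ is a clean way to get smoothness of the inverse.

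The paper takes a different route at the key step: rather than analyzing the rank of $M(\vect{x}_1,\vect{x}_2)$ directly, it invokes an external result (Heyden--\AA str\"om, Theorem~4.1) asserting that the projectivization of $\mu_2$ is birational, which immediately gives that $\mu_2$ is a diffeomorphism onto its image; the inverse for general $r$ is then $\mu_2^{-1}\circ\pi_{1:4}$. Your approach trades that citation for a self-contained linear-algebra argument, and in fact your construction is essentially the ``linear triangulation'' algorithm the paper writes down right after the lemma (the $4\times4$ homogeneous system there is your $M\vect{y}=\vect{r}$ rewritten projectively). So your proof both avoids the external dependency and dovetails with the computational content that follows. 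The paper's version is shorter on the page; yours is more transparent about \emph{why} removing the baseline suffices. Your parenthetical remark that the principal planes of cameras $\ell\ge 3$ must also be excised for $\mu_r$ to be defined is a genuine omission in the paper's definition of $\Var{B}$.
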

\begin{proof}
Clearly $\mu_r$ is a smooth map between manifolds ($\vect{c}_\ell^T \vect{y} + d_\ell = z_3^\ell \ne 0$). It only remains to show that it has a smooth inverse.
Theorem 4.1 of \cite{HA1997} states that $\mu_2$'s projectivization is a birational map, entailing that $\mu_2$ is a diffeomorphism onto its image. Let $\pi_{1:4} : \R^{2r} \to \R^4$ denote projection onto the first $4$ coordinates. Then, $\mu_2^{-1} \circ \pi_{1:4}$ is a smooth map such that $(\mu_2^{-1} \circ \pi_{1:4}) \circ \mu_r = \mu_2^{-1} \circ \mu_2 = \mathbf{1}_{\Var{O}_{\mathrm{MV}}}$, having used that the domain $\Var{O}_{\mathrm{MV}}$ is the same for all $r$. For the right inverse, we see that any element of $\Var{I}_{\mathrm{MV}}$ can be written as $\mu_r(\vect{y})$ for some $\vect{y} \in \Var{O}_{\mathrm{MV}}$. Hence,
\[
(\mu_r \circ (\mu_2^{-1} \circ \pi_{1:4}))(\mu_r(\vect{y})) = (\mu_r \circ \mathbf{1}_{\Var{O}_{\mathrm{MV}}})(\vect{y}) = \mu_r(\vect{y}),
\]
so it is the identity on $\Var{I}_{\mathrm{MV}}$. This proves that $\mu_r$ has $\mu_2^{-1} \circ \pi_{1:4}$ as smooth inverse. 
\end{proof}

As $\mu_r^{-1}$ is required in the statement of the Weingarten map, we give an algorithm for computing $\mu_r^{-1} = \mu_2^{-1} \circ \pi_{1:4}$.
Assume that we are given $\vect{x} \in \Var{I}_\textrm{MV} \subset \R^{2r}$ in the $r$-camera multiview manifold and let its first four coordinates be $(x_1, y_1, x_2, y_2)$. Then, by classic results \cite[Section 12.2]{HZ2003}, the unique element in the kernel of
\[
 \begin{bmatrix}
  (x_1 \vect{e}_3^T - \vect{e}_1^T) P_1 \\
  (y_1 \vect{e}_3^T - \vect{e}_2^T) P_1 \\
  (x_2 \vect{e}_3^T - \vect{e}_1^T) P_2 \\
  (y_2 \vect{e}_3^T - \vect{e}_1^T) P_2 \\
 \end{bmatrix} \in \R^{4 \times 4}
\]
yields homogeneous coordinates of the back-projected point in $\R^3$.

The solution manifold $\Var{S}_\textrm{MV} \subset \Var{I}_\textrm{MV} \times \Var{O}_\mathrm{MV}$ is the graph of $\mu_r^{-1}$. Therefore, it is a properly embedded smooth submanifold; see, e.g., \cite[Proposition 5.7]{Lee2013}.

\subsection{The second fundamental form}
Vector fields and integral curves on $\Var{I}_\textrm{MV}$ can be viewed through the lens of~$\mu_r$:
We construct a local smooth frame of $\Var{I}_{\textrm{MV}}$ by pushing forward a frame from $\Var{O}_\textrm{MV}$ by $\mu_r$. Then, the integral curves of each of the local smooth vector fields are computed, after which we apply the Gauss formula for curves \cite[Lemma 8.5]{Lee1997} to compute the second fundamental form.

Because of \cref{lem_triangulation_diff} a local smooth frame for the \textit{multiview manifold} $\Var{I}_{\textrm{MV}} \subset \R^{2r}$ is obtained by pushing forward the constant global smooth orthonormal frame $( \vect{e}_1, \vect{e}_2, \vect{e}_3 )$ of $\R^3$ by the derivative of $\mu_r$.
Its derivative is
\begin{align*}
 \deriv{\mu_r}{\vect{y}} : \Tang{\vect{y}}{\Var{O}_{\textrm{MV}}} \to \Tang{\mu_r(\vect{y})}{\Var{I}_{\textrm{MV}}}, \quad
 \vect{\dot{y}} \mapsto \left[\,\frac{A_\ell \vect{\dot{y}}}{\vect{c}_\ell^T \vect{y} + d_\ell} - (\vect{c}_\ell^T \vect{\dot{y}}) \frac{A_\ell \vect{y} + \vect{b}_\ell}{(\vect{c}_\ell^T \vect{y} + d_\ell)^2}\, \right]_{\ell=1}^r,
\end{align*}
and so a local smooth frame of $\Var{I}_{\mathrm{MV}}$ is given by
\[
 E_i : \Var{I}_{\mathrm{MV}} \to \mathrm{T}\Var{I}_{\mathrm{MV}},\; \mu_r(\vect{y}) \mapsto \left[\, \frac{A_\ell \vect{e}_i}{\vect{c}_\ell^T \vect{y} + d_\ell} - c_{\ell,i} \frac{A_\ell \vect{y} + \vect{b}_\ell}{(\vect{c}_\ell^T \vect{y} + d_\ell)^2} \, \right]_{\ell=1}^r, \quad i=1,2,3,
\]
where $c_{\ell,i}=\vect{c}_\ell^T \vect{e}_i$. It is generally neither orthonormal nor orthogonal.

We compute the second fundamental form of the multiview manifold by differentiation along the integral curves generated by the smooth local frame $(E_1,E_2,E_3)$. The integral curves through $\mu_r(\vect{y})$ generated by this frame are the images of the integral curves passing through $\vect{y}$ generated by the $\vect{e}_i$'s due to \cite[Proposition 9.6]{Lee2013}. The latter are seen to be $g_i(t) = \vect{y} + t \vect{e}_i$ by elementary results on linear differential equations. Therefore, the integral curves generated by $E_i$ are
\[
 \gamma_i(t)
 = \mu_r(g_i(t))
 = \left[\, \frac{A_\ell(\vect{y} + t \vect{e}_i) + \vect{b}_\ell}{\vect{c}_\ell^T (\vect{y} + t \vect{e}_i)  + d_\ell} \, \right]_{\ell=1}^r.
\]
The components of the second fundamental form at $\vect{x} = \mu_r(\vect{y}) \in \Var{I}_{\mathrm{MV}}$ are then
\begin{align*}
\SFF_\vect{x}( E_i, E_j )
 &= \mathrm{P}_{\mathrm{N}\Var{I}_{\mathrm{MV}}} \left( \tfrac{\mathrm{d}}{\mathrm{d}t} E_{j}|_{\gamma_i(t)} \right) \\
&= \mathrm{P}_{\mathrm{N}\Var{I}_{\mathrm{MV}}} \left( \tfrac{\mathrm{d}}{\mathrm{d}t} \begin{bmatrix} \frac{A_\ell \vect{e}_j}{\vect{c}_\ell^T (\vect{y} + t\vect{e}_i) + d_\ell} - {c}_{\ell,j} \frac{A_\ell (\vect{y} + t \vect{e}_i) + \vect{b}_\ell}{(\vect{c}_\ell^T (\vect{y} + t \vect{e}_i) + d_\ell)^2} \end{bmatrix}_{\ell=1}^r
 \right) \\
&= \mathrm{P}_{\mathrm{N}\Var{I}_{\mathrm{MV}}} \left( \begin{bmatrix} - \frac{c_{\ell,i}}{\alpha^2_\ell(\vect{y})} A_\ell \vect{e}_j - \frac{c_{\ell,j}}{\alpha^2_\ell(\vect{y})} A_\ell \vect{e}_i + 2 \frac{c_{\ell,i} c_{\ell,j}}{\alpha_\ell^3(\vect{y})} (A_\ell \vect{y} + \vect{b}_\ell) \end{bmatrix}_{\ell=1}^r  \right),
\end{align*}
where $\alpha_\ell(\vect{y}) = \vect{c}_\ell^T \vect{y} + d_\ell$ and $i,j = 1, 2, 3$.

\subsection{A practical algorithm} \label{sec_MV_practical_algorithm}
The Weingarten map of $\Var{I}_{\mathrm{MV}}$ in the direction of the normal vector $\eta \in \mathrm{N}_{\vect{x}} \Var{I}_{\mathrm{MV}}$ is obtained by contracting the second fundamental form with $\eta$; that is, $\widehat{S}_{\eta} = \langle \SFF_\vect{x}(E_i, E_j), \eta \rangle$. This can be computed efficiently using linear algebra operations. Partitioning $\eta = [ \eta_\ell ]_{\ell=1}^r$ with $\eta_\ell \in \R^2$, the symmetric coefficient matrix of the Weingarten map relative to the frame $\Var{E} = (E_1, E_2, E_3)$ becomes
\[
 \widehat{S}_\eta =
 \sum_{\ell=1}^r \left[\,
 2 \frac{c_{\ell,i} c_{\ell,j}}{\alpha^3_\ell(\vect{y})} \eta_\ell^T (A_\ell \vect{y} + \vect{b}_\ell) - \frac{c_{\ell,i}}{\alpha^2_\ell(\vect{y})} \eta_\ell^T A_\ell \vect{e}_j - \frac{c_{\ell,j}}{\alpha^2_\ell(\vect{y})} \eta_\ell^T A_\ell \vect{e}_i\,
 \right]_{i,j=1}^{3} \in \R^{3 \times 3},
\]
where $\vect{y} = \mu_r^{-1}(\vect{x})$.

To compute the spectrum of the Weingarten map using efficient linear algebra algorithms, we need to express it with respect to an orthonormal local smooth frame by applying Gram--Schmidt orthogonalization to~$\Var{E}$. This is accomplished by placing the tangent vectors $E_1, E_2, E_3$ as columns of a $2r \times 3$ matrix $J$ and computing its compact QR decomposition $Q R = J$. The coefficient matrix of the Weingarten map expressed with respect to the orthogonalized frame $Q=(Q_1, Q_2, Q_3)$ is then $S_\eta = R^{-T} \widehat{S}_\eta R^{-1}$. 

Since $\mu_r : \Var{O}_\mathrm{MV} \to \Var{I}_\mathrm{MV}$ is a diffeomorphism, $(\deriv{\pi_{\Var{O}_\mathrm{MV}}}{(\vect{x},\vect{y})}) (\deriv{\pi_{\Var{I}_\mathrm{MV}}}{(\vect{x},\vect{y})})^{-1}$ equals the inverse of the derivative of $\mu_r$ by \cref{rem_explicit_derivative}. As $R$ is the matrix of $\deriv{\mu_r}{\vect{y}} : \Tang{\vect{y}}{\Var{O}_\mathrm{MV}} \to \Tang{\vect{x}}{\Var{I}_\mathrm{MV}}$ expressed with respect to the orthogonal basis $(Q_1,Q_2,Q_3)$ of $\Var{I}_\mathrm{MV}$ and the standard basis of $\R^3$, we get that $\kappa[\Var{S}_{\mathrm{GCPP}}](x+\eta,x,y)$ equals
\begin{align*}
 \| (\deriv{\pi_{\Var{O}_\mathrm{MV}}}{(\vect{x},\vect{y})}) (\deriv{\pi_{\Var{I}_\mathrm{MV}}}{(\vect{x},\vect{y})})^{-1} H_\eta^{-1} \|_{\Var{I}\to\Var{O}}
 = \| R^{-1} (I - S_{\eta})^{-1} \|_2 
 = \frac{1}{\sigma_3\bigl( (I - S_\eta)R \bigr)},
\end{align*}
where $I$ is the $3 \times 3$ identity matrix, and $\sigma_3$ is the third largest singular value.

\changed{The computational complexity of this algorithm grows linearly with the number of cameras $r$. Indeed, the Weingarten map $\widehat{S}_\eta$ can be constructed in $\mathcal{O}(r)$ operations, the smooth frame $\Var{E}$ is orthogonalized in $\mathcal{O}(r)$ operations, the change of basis from $\widehat{S}_\eta$ to $S_\eta$ requires a constant number of operations, and the singular values of a $3\times3$ matrix adds another constant.}

\subsection{Numerical experiments}
The computations below were implemented in Matlab R2017b \cite{matlab}. The code we used is provided as supplementary files accompanying the arXiv version of this article. It uses functionality from Matlab's optimization toolbox. The experiments were performed on a computer running Ubuntu 18.04.3 LTS that consisted of an Intel Core i7-5600U CPU with 8GB main memory.

We present a few numerical examples illustrating the behavior of the condition number of triangulation. The basic setup is described next. We take all $10$ camera matrices $P_i \in \R^{3 \times 4}$ from the ``model house'' data set of the Visual Geometry Group of the University of Oxford \cite{multiviewdata}. These cameras are all pointing roughly in the same direction. In our experiments we reconstruct the point $y := p + 1.5^{10} \vect{v} \in \R^3$, where
$p= (-1.85213, -0.532959, -5.65752)\in\R^3$ is one point of the house and $\vect{v}=(-0.29292,-0.08800, -0.95208)$ is the unit-norm vector that points from the center of the first camera $\vect{p}$.
Given a data point $a \in \R^{2r}$, it is triangulated as follows. First a linear triangulation method is applied, finding the right singular vector corresponding to the least singular value of a matrix whose construction is described (for two points) in \cite[Section 12.2]{HZ2003}. We then use it as a starting point for solving optimization problem \cref{eqn_triangulation} with Matlab's nonlinear least squares solver \texttt{lsqnonlin}. For this solver, the following settings were used: \texttt{TolFun} and \texttt{TolX} set to $10^{-28}$, \texttt{StepTolerance} equal to $10^{-14}$, and both \texttt{MaxFunEvals} and \texttt{MaxIter} set to $10^4$. We provided the Jacobian to the algorithm, which it can evaluate numerically.

\changed{Since the multiview manifold $\Var{I}_\mathrm{MV}$ is only a three-dimensional submanifold of $\R^{2r}$ and the computational complexity is linear in $r$, we do not focus on the computational efficiency of computing $\kappa[\Var{S}_{\mathrm{GCPP}}]$. In practical cases, with $r \ll 1000$, the performance is excellent. For example, the average time to compute the condition number for $r=1000$ cameras was less than $0.1$ seconds in our setup. This includes the time to compute the frame $\Var{E}$, the Weingarten map expressed with respect to $\Var{E}$, the orthogonalization of the frame, and the computation of the smallest singular value.}

\paragraph{Experiment 1}
In the first experiment, we provide numerical evidence for the correctness of our theoretical results. We project $y$ to the image planes of the~$r=10$ cameras: $x = \mu_r(y)$. A random normal direction $\eta \in \NSp{x}{\Var{I}_\mathrm{MV}}$ is sampled by taking a random vector with i.i.d. standard normal entries, projecting it to the normal space, and then scaling it to unit norm. We investigate the sensitivity of the points along the ray $a(t) := x - t \eta$.

The theoretical condition number of the triangulation problem at $a(t)$ can be computed numerically for every $t$ using the algorithm from \cref{sec_MV_practical_algorithm}. We can also estimate the condition number experimentally, for example by generating a large number of small perturbations $a(t) + E(t)$ with $\|E(t)\|\approx0$, solving \cref{eqn_triangulation} numerically using \texttt{lsqnonlin} and then checking the distance to the true critical point $y$. However, from the theory we know that the worst direction of infinitesimal perturbation is given by the left singular vector of $(I - S_{-t\eta})R$ corresponding the smallest (i.e. third) singular value, where $R$ is as in \cref{sec_MV_practical_algorithm}. Let $u(t) \in \R^3$ denote this vector, which contains the coordinates of the perturbation relative to the orthonormal basis $Q=(Q_1,Q_2,Q_3)$; see \cref{sec_MV_practical_algorithm}. Then, it suffices to consider only a small (relative) perturbation in this direction; we took $E(t) := 10^{-6} \|a(t)\| Q u(t)$. We solve \cref{eqn_triangulation} with input $a(t) + E(t)$ and output $y_\text{est}(t)$ using \texttt{lsqnonlin} with the exact solution of the unperturbed problem $y$ as starting point. The experimental estimate of the condition number is then $\kappa_\text{est}(t) = \frac{\|y - y_\text{est}(t) \|}{\| E(t) \|}$.

\begin{figure}\centering
 \includegraphics[height=12em]{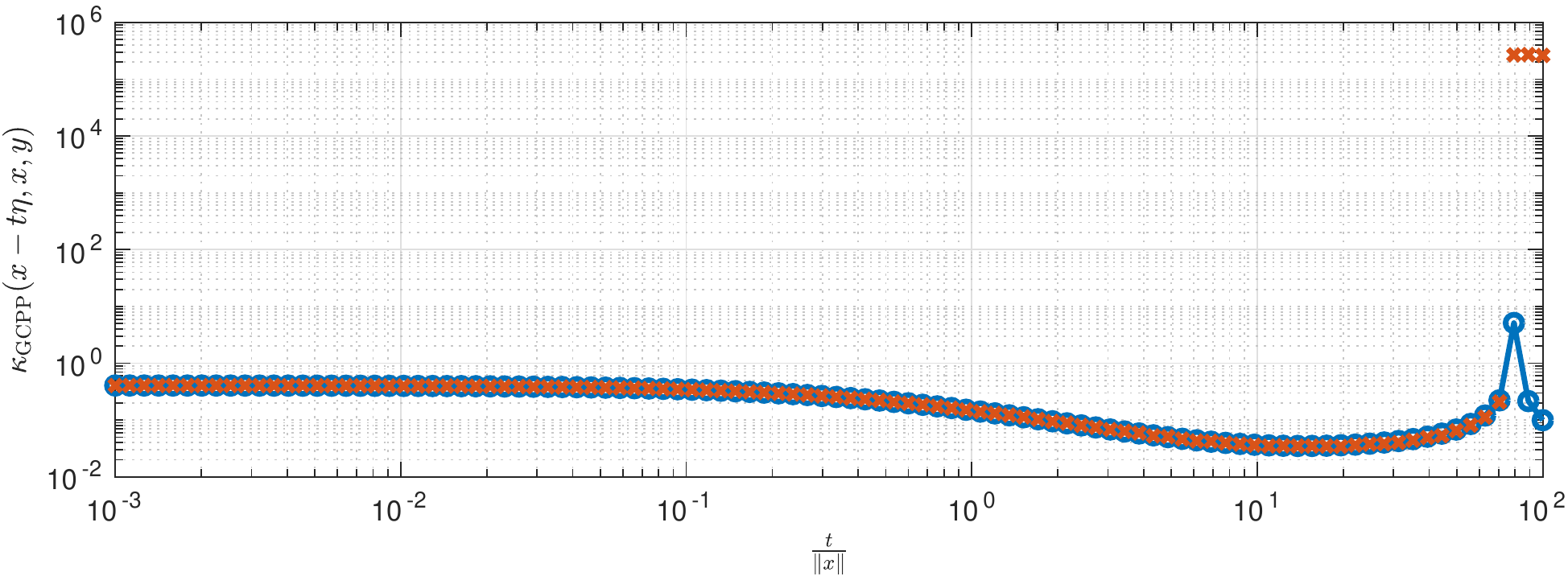}
 \caption{A comparison of theoretical and experimental data of the condition number of the triangulation problem with all cameras for several points along the ray $x - t \eta$. Herein, $\eta$ is a randomly chosen unit-norm normal direction at $x \in \Var{I}_\mathrm{MV}$. The numerically computed theoretical condition numbers are indicated by circles, while the experimentally estimated condition numbers, as described in the text, are plotted with crosses.}
 \label{fig_theory_vs_experiment}
\end{figure}

In \cref{fig_theory_vs_experiment} we show the experimental results for the above setup, where we chose $t = 10^i \|x\|$ with $100$ values of $i$ uniformly spaced between $-3$ and $2$. It is visually evident that the experimental results support the theory. Excluding the three last values of $t$, the arithmetic and geometric means of the condition number divided by the experimental estimate are approximately $1.006664$ and $1.006497$ respectively. This indicates a near perfect match with the theory.

There appears to be one anomaly in \cref{fig_theory_vs_experiment}, however: the three crosses to the right of the singularity at $t^\star = 79.64416 \|x\|$. What happens is that past the singularity, $y$ is no longer a {local minimizer} of the optimization problem $\min_{y\in\Var{O}_\mathrm{MV}} \frac{1}{2} \| a(t) - \mu_{10}(y) \|^2$. This is because $x(t)$ is no longer a local minimizer of $\min_{x \in \Var{I}_\mathrm{MV}} \frac{1}{2} \| a(t) - x \|^2$.
Since we are applying an optimization method, the local minimizer happens to move away from the nearby critical point and instead converges to a different point.
Indeed, one can show that the Riemannian Hessians (see \cite[Chapter 5.5]{AMS2008}) of these problems are positive definite for small $t$, but become indefinite past the singularity $t^*$.
%

\paragraph{Experiment 2}
The next experiment illustrates how the condition number varies with the distance~$t$ along a normal direction $t \eta \in \mathrm{N}\Var{I}_\mathrm{MV}$. We consider the setup from the previous paragraph. The projection of the point $y$ onto the image planes of the first $k$ cameras is $\vect{x}_k := \mu_k(y)=(P_1 y, \ldots, P_k y)$. A random unit-norm normal vector $\eta_k$ in $\mathrm{N}_{x_k} \Var{I}_\mathrm{MV}$ is chosen as described before. We investigate the sensitivity of the perturbed inputs $a_{k}(t) := x_k + t \eta_k$ for $t \in \R$. The condition number is computed for these inputs for $k=2,3,5,10$ and $\frac{\pm t}{\|x_k\|} = 10^i$ for $10^4$ values of $i$ uniformly spaced between $-3$ and $4$. The results are shown in \cref{fig_pert_normal_direction}. We have $3$ peaks for a fixed $k$ because $\dim \Var{I}_\mathrm{MV} = 3$. The two-camera system has only two singularities in the range $|t| < 10^4 \|x_2\|$. These figures illustrate the continuity of $\kappa_\mathrm{GCPP}$ from \cref{cor_kappa_gcpp_cont}.

Up to about $|t| \le 10^{-2} \|x_k\|$ we observe that curvature hardly affects the condition number of the GCPP for the triangulation problem. Beyond this value, curvature plays the dominant role: both the singularities and the tapering off for high relative errors are caused by it. Moreover, the condition number decreases as the number of cameras increases. This indicates that the use of \emph{minimal problems} \cite{Kukelova2013} in computer vision could potentially introduce numerical instability.

\begin{figure}\centering
 \includegraphics[height=5cm]{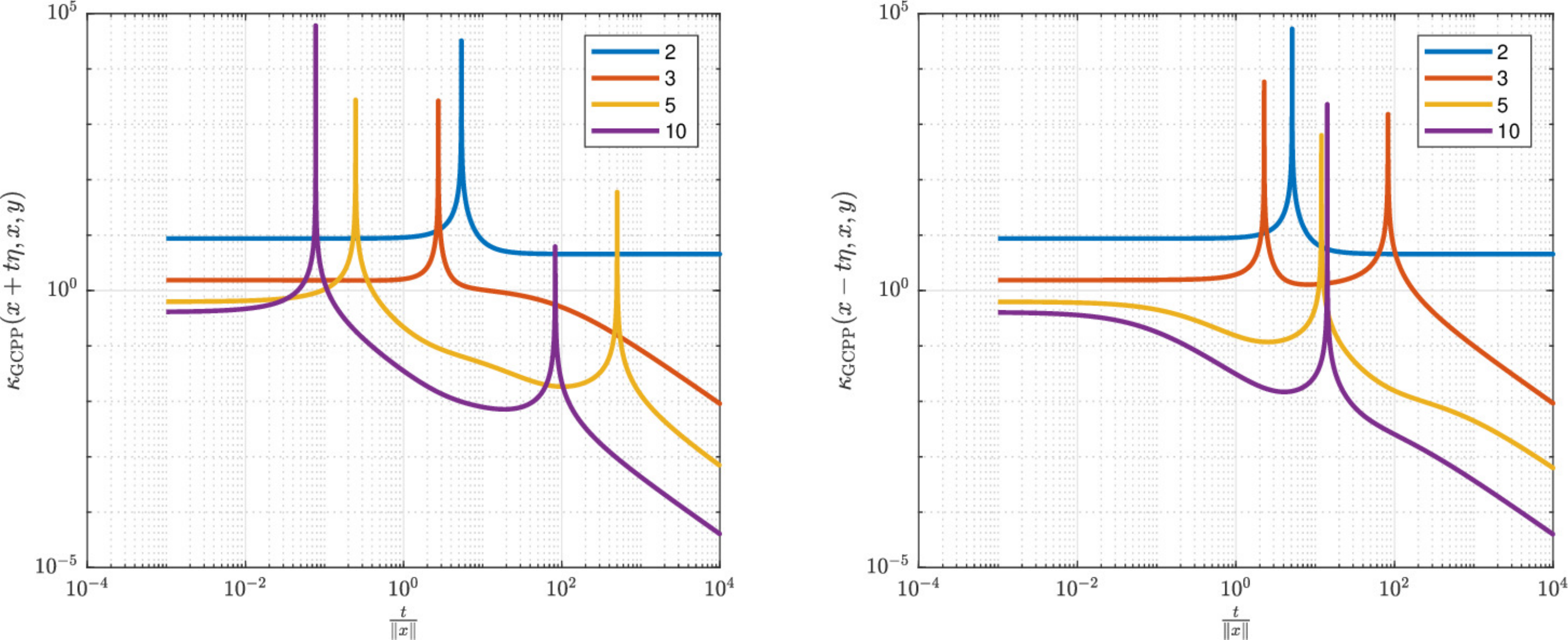}
 \caption{The GCPP condition number in function of the relative distance $\frac{t}{\|x\|}$ from a specified $x \in \Var{I}_\mathrm{MV}$ and a randomly chosen unit-length normal direction $\eta$. Each of the four lines corresponds to a different number $k$ of cameras taking the pictures, namely $k=2,3,5,10$.}
 \label{fig_pert_normal_direction}
\end{figure} 

\section*{Acknowledgements}
The authors thank Nicolas Boumal for posing a question on unconstrained errors, which eventually started the project that led to this article. We are greatly indebted to Carlos Beltr\'an who meticulously studied the manuscript and asked pertinent questions about the theory.
Furthermore, the authors would like to thank Peter B\"urgisser for hosting the second author at TU Berlin in August 2019, and Joeri Van der Veken for fruitful suggestions and pointers to the literature.

\changed{We thank two anonymous reviewers whose remarks enticed us to immensely improve the presentation of our results, specifically in \cref{sec_intro,sec_weingarten}, and to include a discussion of computational aspects in \cref{sec_computational}. We also thank the editor, Anthony So, for adeptly handling the reviewing process.} 

\appendix
\section{Proofs of the technical results} \label{sec_proofs}
\changed{We recall two classic results that we could not locate in modern references in differential geometry.}
A common theme in the three computational problems considered in \cref{sec_results_1,sec_results_2,sec_results_3} is that they involve the projection
\(
\Pi_\Var{I} : \mathrm{N}\Var{I} \to \Var{I}, \;\; (x,\eta) \mapsto x
\)
from the normal bundle $\mathrm{N}\Var{I}$ to the base of the bundle, the embedded submanifold $\Var{I}\subset\R^n$. 
This projection is smooth \cite[Corollary 10.36]{Lee2013} and its derivative is characterized by \changed{the following classical result, which dates back at least to Weyl \cite{Weyl1939}.}

\begin{proposition}\label{thm_ap_derivative0}
  Let $(x,\eta)\in\mathrm{N}\Var I$ and $(\dot x, \dot \eta)\in\mathrm{T}_{(x,\eta)}\mathrm{N}\Var I$. Then,
  $
  \mathrm{P}_{\mathrm{T}_{x}\Var I}(\dot{x}+\dot{\eta}) = H_\eta \dot x,
  $
  where $H_\eta$ is as in \cref{eqn_Hess_distance}.
\end{proposition}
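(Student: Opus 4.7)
The strategy is to unpack the geometric meaning of the tangent vector $(\dot x, \dot \eta)$ to the normal bundle $\mathrm{N}\Var{I}$ at $(x,\eta)$ and then apply the definition of the Weingarten map from \cref{def_shape_operator}.

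First I would realize $(\dot x, \dot \eta)$ as the velocity vector at $t=0$ of a smooth curve $t \mapsto (x(t), \eta(t))$ in $\mathrm{N}\Var{I}$. Here $x(t) \subset \Var{I}$ is a curve on the input manifold realizing $\dot x$, and $\eta(t) \in \NSp{x(t)}{\Var{I}}$ is a normal vector along this curve with $\eta(0) = \eta$ and $\tfrac{\mathrm{d}}{\mathrm{d}t}|_{t=0}\eta(t) = \dot \eta$ in $\R^n$. Applying the coordinate projection $\Pi_\Var{I} : \mathrm{N}\Var{I} \to \Var{I}$ immediately gives $\dot x \in \Tang{x}{\Var{I}}$, so $\mathrm{P}_{\Tang{x}{\Var{I}}}(\dot x) = \dot x$.

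Next I would extend $t \mapsto \eta(t)$ to a smooth normal vector field $N$ on an open neighborhood of $x$ in $\Var{I}$ so that $N|_{x(t)} = \eta(t)$. Then, by the characterization of the Euclidean covariant derivative along a curve discussed in \cref{sec_computational}, we have
\[
\widetilde{\nabla}_{\dot x} N = \tfrac{\mathrm{d}}{\mathrm{d}t}\bigr|_{t=0} N|_{x(t)} = \tfrac{\mathrm{d}}{\mathrm{d}t}\bigr|_{t=0} \eta(t) = \dot \eta.
\]
Applying the definition \cref{def_shape_operator} of the Weingarten map yields $\mathrm{P}_{\Tang{x}{\Var{I}}}(\dot \eta) = -S_\eta(\dot x)$.

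Combining the two contributions and using $H_\eta = \mathbf{1} - S_\eta$ from \cref{eqn_Hess_distance} gives
\[
\mathrm{P}_{\Tang{x}{\Var{I}}}(\dot x + \dot \eta) = \dot x - S_\eta(\dot x) = H_\eta \dot x,
\]
which is the desired identity. The only subtlety is ensuring that the splitting $(\dot x, \dot \eta)$ of a tangent vector to $\mathrm{N}\Var{I}$ inside $\R^n \times \R^n$ is unambiguous and that the extension $N$ exists; both are standard, the former following from the embedding $\mathrm{N}\Var{I} \subset \R^n \times \R^n$ and the latter from the tubular neighborhood construction in \cref{sec_sub_bundles}. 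I do not anticipate any serious obstacle beyond correctly bookkeeping how the two components of $(\dot x, \dot \eta)$ interact with the orthogonal projection.
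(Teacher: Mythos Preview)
Your argument is correct: realizing $(\dot x,\dot\eta)$ by a curve in $\mathrm{N}\Var{I}$, extending $\eta(t)$ to a normal field $N$ along $x(t)$, and invoking \cref{def_shape_operator} to identify $\mathrm{P}_{\Tang{x}{\Var{I}}}(\dot\eta)=-S_\eta(\dot x)$ is exactly the right computation. Note, however, that the paper does not supply its own proof of this proposition; it presents it as a classical result ``which dates back at least to Weyl'' and only cites \cite{Weyl1939}. So there is no paper proof to compare against---your write-up simply fills in the omitted standard argument.
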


A useful consequence is that it allows us to characterize when $H_\eta$ is singular.

\begin{lemma} \label{lem_character_1-S}
 Let $(a,x) \in \Var{S}_\mathrm{CPP}$, $\eta = a - x$, and $H_\eta$ be as in \cref{eqn_Hess_distance}. Then, $H_\eta$ is invertible if and only if~$(a,x)\in\Var{W}_\mathrm{CPP}$.
\end{lemma}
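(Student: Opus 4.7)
The plan is to exploit the natural diffeomorphism $\Var{S}_\mathrm{CPP} \cong \mathrm{N}\Var{I}$, given by $(a,x) \mapsto (x, a-x) = (x,\eta)$, which underlies \cref{C_is_a_manifold}. Under this identification the projection $\Pi_{\R^n}$ corresponds to the endpoint map $E : \mathrm{N}\Var{I} \to \R^n,\; (x,\eta) \mapsto x + \eta$ from \cref{sec_sub_bundles}. Consequently, $\deriv{\Pi_{\R^n}}{(a,x)}$ is invertible if and only if $\deriv{E}{(x,\eta)}$ is invertible, so the claim reduces to showing that invertibility of $\deriv{E}{(x,\eta)}$ is equivalent to invertibility of $H_\eta$.

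To analyze $\deriv{E}{(x,\eta)}(\dot x, \dot\eta) = \dot x + \dot\eta$, I would decompose the target $\R^n = \Tang{x}{\Var{I}} \oplus \NSp{x}{\Var{I}}$. The tangential component equals $H_\eta \dot x$ directly by \cref{thm_ap_derivative0}, while the normal component equals $\mathrm{P}_{\NSp{x}{\Var{I}}}(\dot\eta)$ because $\dot x \in \Tang{x}{\Var{I}}$. Specializing \cref{thm_ap_derivative0} to $\dot x = 0$ forces $\mathrm{P}_{\Tang{x}{\Var{I}}}(\dot\eta) = 0$, so the vertical tangent space at $(x,\eta)$ is exactly $\{0\} \times \NSp{x}{\Var{I}}$, and in particular $\mathrm{P}_{\NSp{x}{\Var{I}}}(\dot\eta)$ can be chosen freely, independently of $\dot x$.

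The key observation is therefore that $\Tang{(x,\eta)}{\mathrm{N}\Var{I}}$ admits a linear parametrization by pairs $(\dot x, v) \in \Tang{x}{\Var{I}} \oplus \NSp{x}{\Var{I}}$, recording the base direction and the normal part of the vertical motion respectively. Since both spaces have dimension $n = \dim \mathrm{N}\Var{I}$, this is an isomorphism. Under it, $\deriv{E}{(x,\eta)}$ is represented by the block-diagonal operator $H_\eta \oplus \mathbf{1}_{\NSp{x}{\Var{I}}}$ on $\Tang{x}{\Var{I}} \oplus \NSp{x}{\Var{I}}$, whose invertibility is equivalent to invertibility of $H_\eta$, completing the argument.

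The only mild obstacle is rigorously justifying the above parametrization of the tangent space. This can be done by picking a local smooth normal frame $(N_1, \ldots, N_{n-m})$ around $x$, writing points of $\mathrm{N}\Var{I}$ near $(x,\eta)$ in the form $(\gamma(u), \sum_i c_i N_i(\gamma(u)))$ in local coordinates $(u, c)$, and differentiating at $(u,c) = (0, c_0)$ with $\eta = \sum_i c_{0,i} N_i(x)$ to exhibit the isomorphism explicitly.
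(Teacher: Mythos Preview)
Your proof is correct. Both your argument and the paper's hinge on \cref{thm_ap_derivative0}, but the packaging differs. The paper derives the operator identity
\[
\mathrm{P}_{\Tang{x}{\Var{I}}}\,\deriv{\Pi_{\R^n}}{(a,x)} = H_\eta\,\deriv{\Pi_{\Var{I}}}{(a,x)}
\]
and then argues the two implications separately: if $\deriv{\Pi_{\R^n}}{(a,x)}$ is invertible, post-composing with its inverse forces $H_\eta$ to be surjective on $\Tang{x}{\Var{I}}$; conversely, a nonzero kernel vector of $\deriv{\Pi_{\R^n}}{(a,x)}$ has $\dot a = 0$, hence $\dot x \ne 0$, and the identity yields $H_\eta \dot x = 0$. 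You instead identify $\Pi_{\R^n}$ with the endpoint map $E$ on $\mathrm{N}\Var{I}$ and parametrize $\Tang{(x,\eta)}{\mathrm{N}\Var{I}}$ by $(\dot x, v) \in \Tang{x}{\Var{I}} \oplus \NSp{x}{\Var{I}}$ so that $\deriv{E}{(x,\eta)}$ becomes the block-diagonal operator $H_\eta \oplus \mathbf{1}$, from which the equivalence is immediate. Your route is slightly more structural and yields the full derivative of $E$ rather than only its tangential part; the paper's route avoids setting up the parametrization and is a bit shorter. The injectivity of your parametrization (via the vertical-space characterization plus dimension count) is sound, and the local normal-frame justification you sketch is the standard way to make it rigorous.
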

\begin{proof}
 Let $(\dot{a},\dot{x}) \in \Tang{(a,x)}{\Var{S}_\mathrm{CPP}}$. Then, we have $\dot{x} = (\deriv{\Pi_\Var{I}}{(a,x)})(\dot{a},\dot{x})$ and $\dot{a} = (\deriv{\Pi_{\R^n}}{(a,x)})(\dot{a},\dot{x})$, so that \cref{thm_ap_derivative0} yields the equality of operators
\begin{align}\label{identity_1-S}
\mathrm{P}_{\Tang{x}{\Var{I}}}\, \deriv{\Pi_{\R^n}}{(a,x)} = H_\eta\,\deriv{\Pi_{\Var{I}}}{(a,x)}.
\end{align}
 
On the one hand, if $(a,x) \in \Var{W}_\mathrm{CPP}$ then the map $\deriv{\Pi_{\R^n}}{(a,x)}$ is invertible. By multiplying with its inverse we get $\mathrm{P}_{\Tang{x}{\Var{I}}} = H_\eta (\deriv{\Pi_{\Var{I}}}{(a,x)}) (\deriv{\Pi_{\R^n}}{(a,x)})^{-1}$. As $H_\eta$ is an endomorphism on $\Tang{x}{\Var{I}}$, which is the range of the left-hand side, the equality requires $H_\eta$ to be an automorphism, i.e., it is invertible.

On the other hand, if $(a,x)\not\in \Var{W}_\mathrm{CPP}$, there is some $(\dot{a},\dot{x})\neq 0$ in the kernel of $\deriv{\Pi_{\R^n}}{(a,x)}$. Since $\dot{a} = (\deriv{\Pi_{\R^n}}{(a,x)})(\dot{a},\dot{x}) = 0$, we must have $\dot{x}\neq 0$. Consequently, applying both sides of \cref{identity_1-S} to $(\dot{a},\dot{x})$ gives $0 = H_\eta \dot{x}$, which implies that $H_\eta$ is singular. This finishes the proof. 
\end{proof}

The next result follows almost directly from \cref{thm_ap_derivative0} \changed{and appears several times as an independent statement in the literature}. The earliest reference we could locate is Abatzoglou \cite[Theorem 4.1]{Abatzoglou1978}, who presented it for $C^2$ embedded submanifolds of $\R^n$ in a local coordinate chart. 

\begin{corollary} \label{thm_ap_derivative}
Let $\Var{I} \subset \R^n$ be a Riemannian embedded submanifold. Then, there exists a tubular neighborhood $\Var{T}$ of $\Var{I}$ such that the projection map $\mathrm{P}_{\Var{I}} : \Var{T} \to \Var{I},  a \mapsto \argmin_{x \in \Var{I}} \| a - x \|^2$ is a smooth submersion with derivative
\[
 \deriv{\mathrm{P}_{\Var{I}}}{a} = H_{\eta}^{-1} \mathrm{P}_{\Tang{x}{\Var{I}}},
\]
where $x = \mathrm{P}_{\Var{I}}(a)$, $\eta = a-x$, and $H_{\eta }$ is given by \cref{eqn_Hess_distance}.
\end{corollary}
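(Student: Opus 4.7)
The plan is to factor the nonlinear projection $\mathrm{P}_{\Var{I}}$ through the tubular neighborhood diffeomorphism and then apply \cref{thm_ap_derivative0} to identify its derivative. Recall from \cref{sec_sub_bundles} that the endpoint map $E : \mathrm{N}\Var{I} \to \R^n,\; (x,\eta) \mapsto x + \eta$ restricts to a diffeomorphism $E|_{\Var{V}_\delta}$ onto an open tubular neighborhood $\Var{T} := E(\Var{V}_\delta)$ for a suitable height $\delta : \Var{I} \to \R_+$. I would shrink $\delta$, if necessary, so that in addition every $a \in \Var{T}$ has a unique nearest point on $\Var{I}$; this is standard for tubular neighborhoods of properly embedded submanifolds. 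With this choice, for every $a \in \Var{T}$ the unique pair $(x,\eta) \in \Var{V}_\delta$ with $a = x + \eta$ satisfies $x = \mathrm{P}_{\Var{I}}(a)$, i.e.
\[
 \mathrm{P}_{\Var{I}} = \Pi_{\Var{I}} \circ E^{-1}|_{\Var{T}},
\]
which is a composition of smooth maps, hence smooth.

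Next I would compute the derivative. Fix $a \in \Var{T}$, write $x = \mathrm{P}_{\Var{I}}(a)$ and $\eta = a - x \in \mathrm{N}_x\Var{I}$. Given $\dot{a} \in \Tang{a}{\R^n} = \R^n$, let $(\dot{x}, \dot{\eta}) := (\deriv{E^{-1}}{a})(\dot{a}) \in \Tang{(x,\eta)}{\mathrm{N}\Var{I}}$. A short curve computation (taking $\gamma(t) = (x(t),\eta(t))$ realizing $(\dot x,\dot\eta)$ and differentiating $E\circ\gamma$ at $0$) yields $\dot{x} + \dot{\eta} = \dot{a}$, and by definition of $\Pi_{\Var{I}}$,
\[
 (\deriv{\mathrm{P}_{\Var{I}}}{a})(\dot{a}) = (\deriv{\Pi_{\Var{I}}}{(x,\eta)})(\dot{x},\dot{\eta}) = \dot{x}.
\]
Applying \cref{thm_ap_derivative0} to $(x,\eta)$ and $(\dot{x},\dot{\eta})$ then gives
\[
 H_{\eta}\, \dot{x} = \mathrm{P}_{\Tang{x}{\Var{I}}}(\dot{x} + \dot{\eta}) = \mathrm{P}_{\Tang{x}{\Var{I}}}(\dot{a}).
\]

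The remaining step is to invert $H_\eta$. Because $E|_{\Var{V}_\delta}$ is a diffeomorphism, $\deriv{E}{(x,\eta)}$ is a linear isomorphism for every $(x,\eta) \in \Var{V}_\delta$; under the canonical identification between $\mathrm{N}\Var{I}$ and $\Var{S}_\mathrm{CPP}$, this is exactly the condition $(a,x) \in \Var{W}_\mathrm{CPP}$, so \cref{lem_character_1-S} ensures $H_{\eta}$ is invertible throughout $\Var{T}$. Consequently $\dot{x} = H_{\eta}^{-1} \mathrm{P}_{\Tang{x}{\Var{I}}}(\dot{a})$, establishing the claimed formula. Since $H_{\eta}^{-1}$ is an automorphism of $\Tang{x}{\Var{I}}$ and $\mathrm{P}_{\Tang{x}{\Var{I}}} : \R^n \to \Tang{x}{\Var{I}}$ is surjective, $\deriv{\mathrm{P}_{\Var{I}}}{a}$ is surjective, i.e., $\mathrm{P}_{\Var{I}}$ is a submersion.

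I expect the only genuinely subtle point to be the invertibility of $H_\eta$ on $\Var{T}$. This could alternatively be arranged directly by shrinking $\delta$ pointwise so that $\delta(x)$ is strictly less than the smallest critical radius at $x$ (so that all eigenvalues $1 - c_i\|\eta\|$ of $H_\eta$ remain positive), but I prefer the route through \cref{lem_character_1-S} because it cleanly ties the statement to the well-posed locus already developed in the paper and avoids any separate curvature estimate.
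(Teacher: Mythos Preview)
Your proof is correct and follows exactly the approach the paper intends: the paper merely states that the corollary ``follows almost directly from \cref{thm_ap_derivative0}'' and cites Abatzoglou, and your argument is precisely the natural way to fill in those details---factor $\mathrm{P}_{\Var{I}} = \Pi_{\Var{I}} \circ E^{-1}$, apply \cref{thm_ap_derivative0}, and invoke \cref{lem_character_1-S} for invertibility of $H_\eta$. There are no gaps, and your use of \cref{lem_character_1-S} is not circular since that lemma depends only on \cref{thm_ap_derivative0}.
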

%

\changed{In the next subsections, proofs are supplied for the main results.}

\subsection{Proofs for \cref{sec_results_2}} \label{sec_proofs_thm3}


\begin{proof}[Proof of \cref{lem_WCPP_is_manifold}]
Recall that $\Pi_{\R^n}$ is a smooth map, so if its derivative is nonsingular at a point $(a,x)\in\Var{S}_\mathrm{CPP}$, then there exists an open neighborhood of $(a,x)$ where this property remains valid. Therefore, applying the inverse function theorem \cite[Theorem 4.5]{Lee2013} at $(a,x) \in \Var{W}_\mathrm{CPP}$, the fact that $\deriv{\Pi_{\R^n}}{(a,x)}$ is invertible implies that there is a neighborhood $\Var{N}_{(a,x)}$ of $(a,x)$ in $\Var{W}_\mathrm{CPP}$ and a neighborhood of $a$ in $\R^n$ such that $\Var{N}_{(a,x)}$ is diffeomorphic to (an open ball in) $\R^n$. Hence, $\Var{W}_\mathrm{CPP}$ is open in $\Var{S}_\mathrm{CPP}$, and, hence, it is an open submanifold of dimension $n$.

To show that $\Var{W}_\mathrm{CPP}$ is dense in $\Var{S}_\mathrm{CPP}$, take any $(a,x) \in \Var{S}_\mathrm{CPP}\setminus\Var{W}_\mathrm{CPP}$. Let $\eta = a-x$. Then, $(x,x) + (\alpha \eta,0) \in \Var{S}_\mathrm{CPP}$ because $\eta \in \NSp{x}{\Var{I}}$. By \cref{lem_character_1-S}, $(x,x) + (\alpha \eta,0) \in \Var{W}_\mathrm{CPP}$ with $\alpha\in(0,\infty)$ if and only if $H_{\alpha\eta}=\mathbf{1} - S_{\alpha \eta} = \mathbf{1} - \alpha S_{\eta}$ is singular. 
This occurs only if $\frac{1}{\alpha}$ equals one of the eigenvalues of the Weingarten map $S_\eta$. Consequently, $(a,x)$ can be reached as the limit of a sequence $(x,x)+ (\alpha_n \eta,0)$, which is wholly contained in $\Var{W}_\mathrm{CPP}$ by taking $\alpha_n$ outside of the discrete set.

Applying \cite[Proposition 5.1]{Lee2013}, we can conclude that $\Var{W}_\mathrm{CPP}$ is even an embedded submanifold. This concludes the argument.
\end{proof}

\begin{proof}[Proof of \cref{main2}]
Let $(a,x) \in \Var{W}_\mathrm{CPP}$ be arbitrary.
On $\Var{W}_\mathrm{CPP}$, the coordinate projection $\Pi_{\R^n}: \Var{S}_\mathrm{CPP} \to \mathbb{R}^n$ has an invertible derivative. Consequently, by the inverse function theorem \cite[Theorem 4.5]{Lee2013} there exist open neighborhoods $\Var X\subset \Var{S}_\mathrm{CPP}$ of $(a,x)$ and $\Var Y$ of $a\subset \R^n$
such that $\Pi_{\R^n}|_{\Var{X}}: \Var X\to \Var Y$ has a smooth inverse function that we call $\phi_{(a,x)}$. Its derivative is $\deriv{\phi_{(a,x)}}{a} = (\deriv{\Pi_{\R^n}}{(a,x)})^{-1}$. Consider the smooth map $\Phi_{(a,x)} := \Pi_{\Var{I}} \circ \phi_{(a,x)}$, where $\Pi_{\Var{I}}: \Var{S}_\mathrm{CPP} \to \Var{I}$ is the coordinate projection. The CPP condition number at $(a,x)\in\Var{W}_\mathrm{CPP}$ \changed{is $\kappa[\Phi_{(a,x)}](a) = \kappa[\Var{W}_{\mathrm{CPP}}](a,x)= \Vert \deriv{\Phi_{(a,x)}}{a}\Vert_{\R^n\to\Var{I}}$.} Since $\deriv{\Phi_{(a,x)}}{a} = (\deriv{\Pi_\Var{I}}{(a,x)}) (\deriv{\Pi_{\R^n}}{(a,x)})^{-1}$, we have that
$\kappa[\Var{W}_{\mathrm{CPP}}](a,x) = \Vert (\deriv{\Pi_{\Var{I}}}{(a,x)})\;(\deriv{\Pi_{\R^n}}{(a,x)})^{-1} \Vert_{\R^n\to\Var{I}}$. Now it follows from \cref{lem_character_1-S,identity_1-S} that $(\deriv{\Pi_{\Var{I}}}{(a,x)}) (\deriv{\Pi_{\R^n}}{(a,x)})^{-1} = H_\eta^{-1} \mathrm{P}_{\Tang{x}{\Var{I}}}$. 
\changed{As the metrics on $\Var{I}$ and $\R^n$ are identical, $\|H_\eta^{-1} \mathrm{P}_{\Tang{x}{\Var{I}}}\|_{\R^n\to\Var{I}} = \|H_\eta^{-1}\|_{\Var{I}\to\Var{I}}$.}
This concludes the first part.

The second part is \cref{lem_character_1-S}.
\end{proof}

\subsection{Proofs for \cref{sec_results_1}} \label{sec_proofs_thm2}

\begin{proof}[Proof of \cref{V_AP_is_a_manifold}]
By \cref{thm_ap_derivative} the projection $\mathrm{P}_{\Var{I}} : \Var{T} \to \Var{I}$ is a smooth map on~$\Var{T}$. Let us denote the graph of this map by $\Var{V} := \{ (a, \mathrm{P}_{\Var{I}}(a) ) \in \Var{T} \times \Var{I} \mid a\in\Var T\}$. It is a smooth embedded submanifold of dimension $\dim \Var{T}= \dim \mathrm{N}\Var I = n$; see, e.g., \cite[Proposition~5.4]{Lee2013}. Let $x=\mathrm{P}_{\Var{I}}(a)$, $\eta = a-x$ and $(\dot{a}, \dot{x}) \in \Tang{(a,x)}{\Var{V}}$ be a nonzero tangent vector.
Because of \cref{thm_ap_derivative} it satisfies $\dot{x} = H_{\eta}^{-1} \mathrm{P}_{\Tang{x}{\Var{I}}} \dot{a}$. Hence,
\(
 (\deriv{\Pi_{\R^n}}{(a,x)})(\dot{a}, H_{\eta}^{-1} \mathrm{P}_{\Tang{x}{\Var{I}}} \dot{a} ) = \dot{a} \ne 0,
\)
so that its kernel is trivial. It follows that $\Var{V} \subset \Var{W}_\mathrm{CPP}$. Since their dimensions match by \cref{C_is_a_manifold}, $\Var{V}$ is an open submanifold. As $\Var{W}_\mathrm{CPP}$ is embedded by \cref{lem_WCPP_is_manifold}, $\Var{V}$ is embedded as well by \cite[Proposition 5.1]{Lee2013}.
Moreover, by construction, we have
 \[
 \Var{W}_\mathrm{AP} = (\Var{V} \times \Var{O}) \cap (\R^n \times \Var{W}) \subset \R^n \times \Var{I} \times \Var{O}.
 \]
The first part of the proof is concluded by observing that the proof of \cref{V_GCPP_is_a_manifold} applies by replacing $\Var{W}_\mathrm{CPP}$ with $\Var{V}$ and $\Var{W}_\mathrm{GCPP}$ with $\Var{W}_\mathrm{AP}$.

Finally, we show that $\Var{W}_\mathrm{AP}$ is dense in $\Var{S}_\mathrm{AP}$. Let $(x+\eta,y)\in\Var{S}_\mathrm{AP}$ with $x\in\Var{I}$ and $\eta\in\mathrm{N}_x \Var{I}$ be arbitrary. Then $\mathrm{P}_\Var{I}(x+\eta) = x$ and $(x,y)\in\Var{S}$.
As $\Var{W}$ is an open dense submanifold of $\Var{S}$, there exists a sequence $(x_i,y_i)\in\Var{W}$ such that $\lim_{i\to\infty} (x_i, y_i) = (x,y)$. Consider $(x_i + \eta_i, y_i) \in \Var{W}_\mathrm{AP}$ with $\eta_i \in \mathrm{N}_{x_i} \Var{I}$ and $\eta_i \to \eta$ chosen in such a way that the first component lies in the tubular neighborhood; this is possible as $\Var{T}$ is an open submanifold of~$\R^n$. Then, the limit of this sequence is $(x+\eta,y)$. Q.E.D.
\end{proof}

\begin{proof}[Proof of \cref{main11}]
For $(a,y) \in \Var{W}_\mathrm{AP}$, let $x=\mathrm{P}_\Var{I}(a)$ and $\eta=a-x$. By \cref{def_kappa}, $\kappa[\Var{W}_\mathrm{AP}](a,y) = \Vert \deriv{(\pi_{\Var{O}} \circ \pi_{\Var{I}}^{-1}\circ \mathrm{P}_\mathcal{I}}{a})\Vert_{\R^n\to\Var{O}}.$
Since $(x,y) \in \Var{W}$, we know that $\deriv{\pi_\Var{I}}{(x,y)}$ is invertible. Moreover, by \cref{thm_ap_derivative} the derivative of the projection $\mathrm{P}_{\Var{I}} : \Var T\to \Var{I}$ at $a$ is $\deriv{\mathrm{P}_{\Var{I}}}{a} = H_{\eta}^{-1} \mathrm{P}_{\Tang{x}{\Var{I}}}$.
Therefore, 
\[
\kappa[\Var{W}_\mathrm{AP}](a,y) = \Vert (\deriv{\pi_{\Var{O}}}{(x,y)}) (\deriv{\pi_{\Var{I}}}{(x,y)} )^{-1}H_{\eta}^{-1} \Vert_{\Var{I}\to\Var{O}}.
\]

For $(a,y) \in \Var{S}_\mathrm{AP}\setminus\Var{W}_\mathrm{AP}$, we have $(x,y)\not\in\Var{W}$ with $x=\mathrm{P}_\Var{I}(a)$. By definition of~$\Var{W}$, this entails that $\deriv{\pi_\Var{I}}{(x,y)}$ is not invertible, concluding the proof.
\end{proof}


\subsection{Proofs for \cref{sec_results_3}} \label{sec_proofs_thm4}

\begin{proof}[Proof of \cref{V_GCPP_is_a_manifold}]
Combining \cref{ass_1,lem_WCPP_is_manifold}, it is seen that $\Var{W}_\mathrm{GCPP}$ is realized as the intersection of two smoothly embedded product manifolds:
 \[
  \Var{W}_\mathrm{GCPP} = (\Var{W}_\mathrm{CPP} \times \Var{O}) \cap (\R^n \times \Var{W}) \subset \R^n \times \Var{I} \times \Var{O}.
 \]
Note that $\mathrm{codim}(\Var{W}_\mathrm{CPP} \times \Var{O}) = \dim\Var{I}$, by \cref{C_is_a_manifold}, and $\mathrm{codim}(\R^n \times \Var{W}) = \dim\Var{O}$, because $\dim \Var{W} = \dim \Var{S}$ and \cref{ass_1} stating that $\dim \Var{S} = \dim \Var{I}$. Thus, it suffices to prove that the intersection is \textit{transversal} \cite[Theorem 6.30]{Lee2013}, because then $\Var{W}_\mathrm{GCPP}$ would be an embedded submanifold of codimension $\dim\Var{I} + \dim\Var{O}$.

Consider a point $(a,x,y) \in \Var{W}_\mathrm{GCPP}$. Transversality means that we need to show
\[
\Tang{(a,x,y)}{(\Var{W}_\mathrm{CPP} \times \Var{O})} + \Tang{(a,x,y)}{(\R^n \times \Var{W})}=\Tang{(a,x,y)}{(\R^n \times \Var{I} \times \Var{O})}.
\]
Fix an arbitrary $(\dot{a}, \dot{x}, \dot{y}) \in \Tang{a}{\R^n} \times \Tang{x}{\Var{I}} \times \Tang{y}{\Var{O}}$. Then, it suffices to show there exist $( (\dot{a}_1, \dot{x}_1), \dot{y}_1) \in \Tang{(a,x)}{\Var{W}_\mathrm{CPP}} \times \Tang{y}{\Var{O}}$ and $( \dot{a}_2, (\dot{x}_2, \dot{y}_2))\in \Tang{a}{\R^n} \times \Tang{(x,y)}{\Var{W}}$ such that
\(
 (\dot{a}, \dot{x}, \dot{y}) = (\dot{a}_1 + \dot{a}_2, \dot{x}_1 + \dot{x}_2, \dot{y}_1 + \dot{y}_2).
\)
It follows from \cref{thm_ap_derivative0,lem_character_1-S} that $
 \dot{a}_1 = H_{\eta} \dot{x}_1 + \dot{\eta},
$
where $\eta=a-x$ and where $\dot{\eta} \in \mathrm{N}_x \Var{I}$. The tangent vectors $\dot{y}_1 \in \Tang{y}{\Var{O}}$, $\dot{a}_2 \in \Tang{a}{\R^n}$ and  $\dot{x}_1 \in \Tang{x}{\Var{I}}$ can be chosen freely.
Choosing
$\dot{x}_1 = \dot{x}$ and $\dot{x}_2 = 0$; $\dot{y}_1 = \dot{y}$ and $\dot{y}_2 = 0$ and $\dot{a}_1 =
H_{\eta} \dot{x}_1$ and $\dot{a}_2 = \dot{a} - \dot{a}_1$ yields $(\dot a, \dot x, \dot y)$. This concludes the first part of the proof.

It remains to prove $\Var{W}_\mathrm{GCPP}$ is dense in $\Var{S}_\mathrm{GCPP}$. For this, we recall that
\begin{align*}
\Var{S}_\mathrm{GCPP} = (\Var{S}_\mathrm{CPP} \times \Var{O}) \cap (\R^n \times \Var{S}).
\end{align*}
Let $(a,x,y)\in\Var{S}_\mathrm{GCPP}$ and let $\eta = a-x\in\mathrm{N}_x\Var I$. Then, $(a,x) \in \Var{S}_\mathrm{CPP}$ and $(x,y)\in\Var{S}$.
As $\Var{W}$ is an open dense submanifold of $\Var{S}$ by \cref{ass_1}, there exists a sequence $(x_i,y_i)\in\Var{W}$ such that $\lim_{i\to\infty} (x_i, y_i) = (x,y)$. Moreover, there exists a sequence $\eta_i \to \eta$ with $\eta_i \in \Tang{x_i}{\Var{I}}$ so that $(x_i + \eta_i, x_i) \in \Var{S}_\mathrm{CPP}$, because $\Pi_{\Var I}(\Var{S}_\mathrm{CPP}) = \Var I$. Since $\Var{W}_\mathrm{CPP}$ is open dense in $\Var{S}_\mathrm{CPP}$
by \cref{lem_WCPP_is_manifold}, only finitely many elements of the sequence are not in $\Var{W}_\mathrm{CPP}$. We can pass to a subsequence $(x_j+\eta_j,x_j)\in\Var{W}_\mathrm{CPP}$. Now $(x_j + \eta_j, x_j, y_j) \in \Var{W}_\mathrm{GCPP}$ and moreover its limit is $(x+\eta,x,y) \in \Var{S}_\mathrm{GCPP}$. Q.E.D.
\end{proof}

\begin{proof}[Proof of \cref{main3}]
For a well-posed triple $(a,x,y)\in \Var{W}_\mathrm{GCPP}$ we obtain from \cref{def_kappa} that $\kappa[\Var{W}_\mathrm{GCPP}](a,x,y) = \Vert (\deriv{\Pi_{\Var{O}}}{(a,x,y)}) (\deriv{\Pi_{\R^n}}{(a,x,y)})^{-1}\Vert_{\R^n\to\Var{O}}$. We compute the right-hand side of this equation.
The situation appears as follows:
\[
\xymatrix{
\Var{W}_\mathrm{CPP}\ar^{\Pi_{\R^n}}[r]\ar_{\Pi_{\Var{I}}}[d] &\R^n &\Var{W}_\mathrm{GCPP}\ar^{\Pi_{\Var{O}}}[d]\ar_{\Pi_{\R^n}}[l]&\\
\Var{I} & \Var{W}\ar^{\pi_{\Var{I}}}[l]\ar_{\pi_{\Var{O}}}[r] &\Var{O}
}
\]
We have in addition the projections $(\mathbf{1}\times \Pi_{\Var{I}}):\Var{W}_\mathrm{GCPP}\to \Var{W}_\mathrm{CPP},\, (a,x,y)\mapsto (a,x)$ and $(\Pi_{\Var{I}}\times \mathbf{1}):\Var{W}_\mathrm{GCPP}\to \Var{W},\, (a,x,y)\mapsto (x,y)$. With this we have
\[
(\Pi_{\R^n}\circ(\mathbf{1}\times \pi_{\Var{I}}))(a,x,y) = \Pi_{\R^n}(a,x,y) \;\text{ and }\; (\pi_{\Var{O}}\circ(\Pi_{\Var{I}}\times \mathbf{1}))(a,x,y) = \Pi_{\Var{O}}(a,x,y).
\]
Taking derivatives we get
{\small\begin{align*}
(\deriv{\Pi_{\R^n}}{(a,x)})\; \deriv{(\mathbf{1}\times \pi_{\Var{I}})}{(a,x,y)}  = \deriv{\Pi_{\R^n}}{(a,x,y)} \text{ and }
(\deriv{\pi_{\Var{O}}}{(a,x)})\; \deriv{(\Pi_{\Var{I}}\times \mathbf{1})}{(a,x,y)}  = \deriv{\Pi_{\Var{O}}}{(a,x,y)}.\nonumber\end{align*}}%
Since $(x,y)\in \Var{W}$, the derivative $\deriv{ \pi_{\Var{I}}}{(x,y)}$ is invertible, and so $\deriv{(\mathbf{1}\times \pi_{\Var{I}})}{(a,x,y)}$ is also invertible. On other hand, since $(a,x)\in \Var{W}_\mathrm{CPP}$, the derivative $\deriv{ \Pi_{\R^n}}{(a,x)}$ is invertible.
Altogether we see that $\deriv{\Pi_{\R^n}}{(a,x,y)}$ is invertible,
and that
\begin{multline*}
(\deriv{\Pi_{\Var{O}}}{(a,x,y)}) (\deriv{\Pi_{\R^n}}{(a,x,y)})^{-1} \\
   =  (\deriv{\pi_{\Var{O}}}{(x,y)}) \left( \deriv{(\Pi_{\Var{I}}\times \mathbf{1})}{(a,x,y)} \right) \left(\deriv{(\mathbf{1}\times \pi_{\Var{I}})}{(a,x,y)}\right)^{-1}  (\deriv{\Pi_{\R^n}}{(a,x)})^{-1}.
\end{multline*}
The derivatives in the middle satisfy
\begin{align*}
  (\dot x, \dot y) &=\deriv{(\Pi_{\Var{I}}\times \mathbf{1})}{(a,x,y)}\; \left( \deriv{(\mathbf{1}\times \pi_{\Var{I}})}{(a,x,y)} \right)^{-1}\,(\dot a, \dot x),
\end{align*}
while on the other hand 
\(
  (\dot x, \dot y) = (\deriv{\pi_{\Var{I}}}{(x,y)})^{-1}\;(\deriv{\Pi_{\Var{I}}}{(a,x)})\,(\dot a, \dot x)
 .\)
This implies that the two linear maps on the right hand side in the previous equations are equal.
Hence,
\begin{align}
\nonumber (\deriv{\Pi_{\Var{O}}}{(a,x,y)}) (\deriv{\Pi_{\R^n}}{(a,x,y)})^{-1}
&= (\deriv{\pi_{\Var{O}}}{(x,y)}) (\deriv{\pi_{\Var{I}}}{(x,y)})^{-1} (\deriv{\Pi_{\Var{I}}}{(a,x)}) ( \deriv{\Pi_{\R^n}}{(a,x)} )^{-1} \\
\label{222} &= (\deriv{\pi_{\Var{O}}}{(x,y)}) (\deriv{\pi_{\Var{I}}}{(x,y)})^{-1} H_{\eta}^{-1} \mathrm{P}_{\mathrm{T}_x \Var{I}},\end{align}
where we used \cref{identity_1-S,lem_character_1-S}. Taking spectral norms on both sides completes the proof for points in $\Var{W}_\mathrm{GCPP}$.
Finally, for points outside $\Var{W}_\mathrm{GCPP}$ the proof follows from the definition of $\Var{W}_\mathrm{GCPP}$ combined with \cref{main11,main2}.
\end{proof}

\subsection{Proofs for \cref{sec_optimization}}\label{sec_proofs_optim}


\begin{proof}[Proof of \cref{prop_riemannian_gcpp_cp}]
Taking the derivative of the objective function at a point $(x,y)\in\Var{W}$ we see that the critical points satisfy \(
 \langle a - x, \deriv{\pi_\Var{I}}{(x,y)}(\dot{x}, \dot{y}) \rangle = 0
\)
for all $(\dot{x}, \dot{y}) \in \Tang{(x,y)}{\Var{W}}$. By the definition of $\Var{W}$, the derivative  $\deriv{\pi_\Var{I}}{(x,y)}$ is surjective, which implies that $(\deriv{\pi_\Var{I}}{(x,y)}) (\Tang{(x,y)}{\Var{W}}) = \Tang{x}{\Var{I}}$. Therefore, the condition of being a critical point is equivalent to $a - x\in \mathrm{N}_x\Var I$.
\end{proof}

\begin{proof}[Proof of \cref{prop_riemannian_gcpp}]
By assumption $(a^\star,x^\star,y^\star)\in \Var{W}_\mathrm{GCPP}$. In particular, this implies $(a^\star,x^\star)\in\Var{W}_\mathrm{GCPP}$, so that $\deriv{\Pi_{\R^n}}{(a^\star,x^\star)}:\mathrm{T}_{(a^\star,x^\star)}\Var{W}_\mathrm{CPP}\to \R^n$ is invertible. By the inverse function theorem \cite[Theorem 4.5]{Lee2013}, there is an open neighborhood $\Var{U}_{(a^\star,x^\star)}\subset \Var{W}_\mathrm{CPP}$ of $(a^\star,x^\star)$ such that $\Pi_{\R^n}$ restricts to a diffeomorphism on
\[
\Var{A}_{a^\star}=\Pi_{\R^n}(\Var{U}_{(a^\star,x^\star)}).
\]
Let $\Pi_{\R^n}^{-1}$ denote the inverse of $\Pi_{\R^n}$ on $\Var A_{a^\star}$.
Combining \cref{identity_1-S,lem_character_1-S}, the derivative of $\Pi_{\Var I} \circ \Pi_{\R^n}^{-1}$ is given by
\(
(\deriv{\Pi_{\Var{I}}}{(a,x)}) (\deriv{\Pi_{\R^n}}{(a,x)})^{-1} 
= H_\eta^{-1} \mathrm{P}_{\Tang{x}{\Var{I}}}
\)
and thus has constant rank on  $\Var A_{a^\star}$ equal to $\mathrm{dim}\,\Var{I}$.
This implies that
\[
(\mathbf{1}\times (\deriv{\pi_{\Var{O}}}{(x,y)})\,(\deriv{\pi_{\Var{I}}}{(x,y)})^{-1})(\deriv{\Pi_{\Var{I}}}{(a,x)}) (\deriv{\Pi_{\R^n}}{(a,x)})^{-1}: \mathrm{T}_a \Var A_{a^\star}\to \mathrm{T}_{(x,y)}\Var{W}
\]
has constant rank on $\Var A_{a^\star}$
equal to $\mathrm{dim}\,\Var{I} =\mathrm{dim}\,\Var{W}$, by \cref{ass_1}. Consequently, there is a smooth function
\(
\Phi: \Var A_{a^\star}\to \Var{W}
\)
with $\Phi(a^\star) = (x^\star,y^\star)$. The above showed that its derivative has constant rank equal to $\dim \Var{W}$, so $\Phi$ is a smooth submersion.
By Proposition 4.28 of \cite{Lee2013}, $\Phi(\Var{A}_{a^\star}) \subset \Var{W}$ is an open subset, so it is a submanifold of dimension $\dim \Var{I} = \dim\Var{W}$. Similarly, the derivative $\deriv{\pi_{\Var{I}}}{(x,y)}$ at $(x,y)\in\Var{W}$ has maximal rank equal to $\dim \Var{I}$, by definition of $\Var{W}$. Hence, $\pi_\Var{I} \circ \Phi$ is also a smooth submersion.
After possibly passing to an open subset of $\Var A_{a^\star}$, we can thus assume
\[
\Var N_{(x^\star,y^\star)} :=\Phi(\Var A_{a^\star})\subset \Var{W} \quad\text{and}\quad \Var I_{x^\star} = \pi_{\Var I}(\Var N_{(x^\star,y^\star)}) \subset \Var I
\]
are open submanifolds each of dimension $\mathrm{dim}\,\Var I$ and that
\begin{enumerate}
 \item[(i)] $x^\star$ is the global minimizer of the distance to $a^\star$ on $\Var{I}_{x^\star}$, and hence,
 \item[(ii)] all $x\in\Var{I}_{x^\star}$ lie on the same side of $\Tang{x^\star}{\Var{I}_{x^\star}}$, considering the tangent space $\Tang{x^\star}{\Var{I}_{x^\star}}$ as an affine linear subspace of $\R^n$ with base point $x^\star$.
\end{enumerate}
Now let $a \in \Var{A}_{a^\star}$ be a point in the neighborhood of $a^\star$.
The restricted squared distance function $d_a : \Var N_{(x^\star,y^\star)}\to\R,\, (x,y) \mapsto \frac{1}{2} \| a - \pi_{\Var{I}}(x,y) \|^2$ that is minimized on $\Var N_{(x^\star,y^\star)}$ is smooth.
Its critical points satisfy
\[
\langle a - \pi_{\Var{I}}(x,y), \deriv{\pi_{\Var{I}}}{(x,y)}(\dot{x},\dot{y}) \rangle = 0  \quad\text{for all}\quad (\dot{x},\dot{y}) \in \Tang{(x,y)}{\Var{N}_{(x^\star,y^\star)}}.
\]
Since $(x^\star,y^\star) \in \Var{N}_{(x^\star,y^\star)} \subset \Var{W}$, the derivative $\deriv{\pi_\Var{I}}{(x^\star,y^\star)}$ of $\pi_{\Var{I}} : \Var{W} \to \Var{I}$ is invertible.
Then, for all $(x,y)\in \Var{N}_{(x^\star,y^\star)}$ the image of $\deriv{\pi_{\Var{I}}}{(x,y)}$ is the whole tangent space $\Tang{x}{\Var{I}}$. Consequently, the critical points must satisfy $a - x \perp \Tang{x}{\Var{I}}$.

By construction, for every $a \in \Var{A}_{a^\star}$ there is a unique $(x,y) \in \Var{N}_{(x^\star,y^\star)}$ so that $(a,x,y) \in\Var{W}_{\mathrm{GCPP}}$, namely $(x,y)=\Phi(a)$.
This implies that $(x,y)$ is the \textit{unique} critical point of the squared distance function restricted to $\Var{N}_{(x^\star,y^\star)}$.

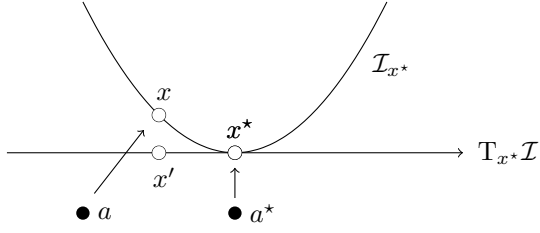
\begin{figure}
  \begin{center}
    \begin{tikzpicture}[scale = 2]

      \draw[black]   plot[smooth,domain=-1:1] (\x, {(\x)*(\x)});

	 \draw[->] (-1.5,0)  -- (1.5,0) node[right, xshift = 2pt] {$\mathrm{T}_{x^\star}\Var I$};
      \draw[black]  (0,0) circle[radius=1.25pt] node[above, yshift=2pt, xshift = 2pt] 	{$x^\star$};
      \fill[white]  (0,0) circle[radius=1.24pt];
      \draw[->] (0,-0.3)  -- (0,-0.1);

	\draw[black]  (0,0) circle[radius=1.25pt] node[above, yshift=2pt, xshift = 2pt] 	{$x^\star$};
      \fill[white]  (0,0) circle[radius=1.24pt];

      \draw[black]  (-0.5,0) circle[radius=1.25pt] node[below, yshift=-2pt, xshift = 2pt] 	{$x'$};
      \fill[white]  (-0.5,0) circle[radius=1.24pt];

       \draw[black]  (-0.5,0.25) circle[radius=1.25pt] node[above, yshift=2pt, xshift = 2pt] 	{$x$};
      \fill[white]  (-0.5,0.25) circle[radius=1.24pt];

      \fill[black]  (0,-0.4) circle[radius=1.25pt] node[right, xshift = 2pt] {$a^\star$};
	\fill[black]  (-1,-0.4) circle[radius=1.25pt] node[right, xshift = 2pt] {$a$};
	 \draw[->] (-0.925,-0.27)  -- (-0.6,0.15);

	 \draw[black]  (1,0.75) node[below, yshift=-2pt, xshift = 2pt] 	{$\Var I_{x^\star}$};
 \end{tikzpicture}
  \end{center}
  \caption{\label{fig10}
  A sketch of the geometric construction in the proof of \cref{prop_riemannian_gcpp}.
  }
\end{figure}

It only remains to show that for all $a\in\Var{A}_{a^\star}$, the minimizer of $d_a$ is attained in the interior of $\Var{N}_{(x^\star,y^\star)}$. This entails that the unique critical point is also the unique global minimizer on $\Var{N}_{(x^\star,y^\star)}$.
We show that there exists a $\delta > 0$ such that restricting $\Var{A}_{a^\star}$ to the open ball $B_\delta(a^\star)$ of radius $\delta$ centered at $a^\star$ in $\R^n$ yields the desired result.

Here is the geometric setup for the rest of the proof, depicted in \cref{fig10}: let $a\in B_\delta(a^\star)$ and let $(x,y)\in \overline{\Var N_{(x^\star,y^\star)}}$ be the minimizer for $d_a$ in the closure of $\Var N_{(x^\star,y^\star)}$. Then, $x$ is in the closure of $\Var I_{x^\star}$ by continuity of $\pi_{\Var I}$. Furthermore, let $x'=P(x)$, where $P := \mathrm{P}_{\Tang{x^\star}{\Var{I}_{x^\star}}}$ is the orthogonal projection onto the tangent space $\Tang{x^\star}{\Var{I}^\star}$ considered as an affine linear subspace of $\R^n$ with base point $x^\star$.

The image of $B_\delta(a^\star)$ under $P$ is an open ball $B_\delta(x^\star) \cap \Tang{x^\star}{\Var{I}_{x^\star}} \subset \Tang{x^\star}{\Var{I}_{x^\star}}$. The distance function $h : \Var{I}_{x^\star} \to \R, \; x \mapsto \|x - P( x ) \|$ measures the ``height'' of $x$ from the tangent space $\Tang{x^\star}{\Var{I}_{x^\star}}$.
Now, we can bound the distance from $a$ to $x$ as follows.
\begin{align}
 \| a - x \|
\nonumber &= \| a - a^\star + a^\star - x^\star + x^\star - x' + x'  - x\| \\
\nonumber &\le \| a - a^\star \| + \| a^\star - x^\star \| + \| x^\star - x' \| + \| x- x' \| \\
\nonumber &\le \| a^\star - x^\star \| + 2 \delta + h(x) \\
\label{eqn_upp_bound} &\le \| a^\star - x^\star \| + 2 \delta + \max_{z \in \Var{P}_{\delta}(x^\star)} h(z),
\end{align}
where the first bound is the triangle inequality, and where $\Var{P}_{\delta}(x^\star) \subset \Var{I}_{x^\star}$ is the preimage of $B_\delta(x^\star)\cap \Tang{x^\star}{\Var{I}_{x^\star}}$ under the projection $P$ restricted to $\Var{I}_{x^\star}$.

Now let $x\in \partial\Var{I}_{x^\star}$ be a point in the boundary of $\Var{I}_{x^\star}$ and, again, $x'=P(x)$.  We want to show that the distance $\Vert a-x\Vert$ is strictly larger than \cref{eqn_upp_bound}. This would imply that local minimizer for $d_a$ is indeed contained in the interior of $\Var N_{(x^\star,y^\star)}$.
We have
\begin{align*}
\| a - x \|
&= \| a - a^\star + a^\star - x^\star + x^\star - x '+ x' - x\| \\
&\ge \| a^\star - x^\star + x^\star - x' + x' - x \| - \| a - a^\star \|.
\end{align*}
Note that $x^\star = P(a^\star)$. Hence we have the orthogonal decomposition
\[
 a^\star - x^\star + x^\star - x' + x' - x = (\mathbf{1} - P)(a^\star - x) + P(a^\star - x) = a^\star - x.
\]
Plugging this into the above, we find
\begin{align*}
\| a - x\|
&\ge \| (\mathbf{1} - P)(a^\star - x) + P(a^\star - x) \| - \delta \\
&= \sqrt{ \|(\mathbf{1} - P)(a^\star - x)\|^2 + \| P(a^\star - x) \|^2 } - \delta \\
&\ge \sqrt{ \|(\mathbf{1} - P)(a^\star - x)\|^2+ \epsilon^2 } - \delta,
\end{align*}
where $\epsilon = \min_{z\in P(\partial\Var{I}_{x^\star})} \Vert x^\star - z\Vert > 0$.

Observe that $a^\star - x^\star$ points from $x^\star$ to $a^\star$ and likewise $x' - x$ points from $x$ to $x'$. Therefore, $\langle a^\star - x^\star, x' - x\rangle \geq 0$ because both point away from the manifold $\Var{I}_{x^\star}$, which lies entirely on the side of $x$ by assumption (ii) above; see also \cref{fig10}. Consequently,
\(
\| (\mathbf{1} - P)(a^\star - x) \| = \Vert a^\star - x^\star +x' - x  \Vert \ge \| a^\star - x^\star \|,
\)
which implies
\begin{align*}
\| a - x \| &\ge \sqrt{ \|a^\star - x^\star\|^2 + \epsilon^2 } - \delta = \|a^\star - x^\star\| + \frac{\epsilon^2}{2 \|a^\star - x^\star\|} - \delta + \mathcal{O}(\epsilon^4).
\end{align*}
Note that $h$ is a continuous function with $\lim_{x\to x^\star} h(x) = 0$. Hence, as $\delta\to0$ the maximum height tends to zero as well. Moreover, $\epsilon$ and $\|a^\star - x^\star\|$ are independent of $\delta$. All this implies that we can choose $\delta >0$ sufficiently small so that
\[
\frac{\epsilon^2}{2 \|a^\star - x^\star\|} - \delta + \mathcal{O}(\epsilon^4)\geq 2 \delta + \max_{z \in \Var{P}_{\delta}(x^\star)} h(z).
\]
It follows that $\Vert a-x\Vert$ is lower bounded by  \cref{eqn_upp_bound}, so that the minimizer for $d_a$ must be contained in the interior of $\Var N_{(x^\star,y^\star)}$.

The foregoing shows that on $B_\delta(a^\star) \cap \Var{A}_{a^\star}$, the map $\Pi_{\Var{I}} \circ \Pi_{\R^n}^{-1}$ equals the nonlinear projection $\mathrm{P}_{\Var{I}_{x^\star}}$ onto the manifold $\Var{I}_{x^\star}$.
Putting everything together, we obtain
\[
 \rho_{(a^\star,x^\star,y^\star)}(a)
 = (\Pi_{\Var{O}} \circ \Pi_{\R^n}^{-1}) (a)
 = (\pi_{\Var{O}} \circ \pi_{\Var{I}}^{-1} \circ \Pi_{\Var{I}} \circ \Pi_{\R^n}^{-1}) (a)
 = (\pi_{\Var{O}} \circ \pi_{\Var{I}}^{-1} \circ \mathrm{P}_{\Var{I}_{x^\star}}) (a);
\]
the second equality following \cref{222}. The condition number of this smooth map is given by~\cref{def_kappa}. Comparing with \cref{main11,main3} concludes the proof.
\end{proof}
 
\bibliographystyle{siam}
\bibliography{BV5}

\end{document}